\theoremstyle{plain} 
\newtheorem{theorem}{Theorem}[section]
\newtheorem{lemma}[theorem]{Lemma}
\newtheorem{proposition}[theorem]{Proposition}
\newtheorem{corollary}[theorem]{Corollary}
\newtheorem{conjecture}[theorem]{Conjecture}
\theoremstyle{definition} 
\newtheorem{example}[theorem]{Example}
\newtheorem{examples}[theorem]{Examples}
\newtheorem{definition}[theorem]{Definition}
\newtheorem{remark}[theorem]{Remark}
\newcommand{\Coker}{\mbox{\rm Coker\,}}
\newcommand{\End}[1]{\operatorname{\rm End}_{#1}}
\newcommand{\Hom}[1]{\operatorname{{\rm Hom}}_{#1}}
\newcommand{\RHom}[1]{\operatorname{{\rm RHom}}_{#1}}
\newcommand{\Ext}[2]{\operatorname{\rm Ext}^{#1}_{#2}}
\newcommand{\rad}{\operatorname{rad}}
\newcommand{\add}{\mbox{{\rm add \!}}}
\newcommand{\codim}{\operatorname{codim}}
\newcommand{\GL}{\mbox{{\rm GL \!\!}}}
\newcommand{\MOD}{\operatorname{mod}}
\renewcommand{\mod}{\mbox{{\rm mod \!}}}
\newcommand{\rep}{\mbox{{\rm rep \!\!}}}
\newcommand{\proj}{\operatorname{{\rm proj }}}
\newcommand{\Ho}{\operatorname{H}}
\newcommand{\ltensor}[1]{\underset{#1}{\overset{\rm L}{\otimes}}}
\newcommand{\iso}{\xrightarrow{_\sim}}
\newcommand{\cA}{\mathcal{A}}
\newcommand{\cC}{\mathcal{C}}
\newcommand{\cD}{\mathcal{D}}
\newcommand{\cF}{\mathcal{F}}
\newcommand{\cM}{\mathcal{M}}
\newcommand{\cO}{\mathcal{O}}
\newcommand{\cS}{\mathcal{S}}
\newcommand{\cT}{\mathcal{T}}
\newcommand{\cU}{\mathcal{U}}
\newcommand{\cW}{\mathcal{W}}
\newcommand{\cX}{\mathcal{X}}
\newcommand{\cZ}{\mathcal{Z}}
\newcommand{\bA}{\mathbb{A}}
\newcommand{\bC}{\mathbb{C}}
\newcommand{\bQ}{\mathbb{Q}}
\newcommand{\bR}{\mathbb{R}}
\newcommand{\bZ}{\mathbb{Z}}
\newcommand{\bd}{\mathbf{d}}
\newcommand{\be}{\mathbf{e}}
\newcommand{\bg}{\mathbf{g}}
\newcommand{\bh}{\mathbf{h}}
\definecolor{dark-green}{RGB}{14,150,2}
\newcommand{\op}{\operatorname{op}}
\newcommand{\cyl}{\operatorname{Cyl}}
\newcommand{\Wall}{\operatorname{Wall}}
\newcommand{\prin}{\operatorname{prin}}
\newcommand{\cluster}{\operatorname{cluster}}
\newcommand{\Supp}{\mbox{{\rm Supp}}}
\newcommand{\fD}{\mathfrak{D}}
\newcommand{\fg}{\mathfrak{g}}
\newcommand{\fM}{\mathfrak{M}}
\newcommand{\fS}{\mathfrak{S}}
\newcommand{\cI}{\mathcal{I}}
\newcommand{\tsilt}{\operatorname{2-silt}}
\let\@wraptoccontribs\wraptoccontribs
\newcommand{\id}{\mathbf{1}}
\newcommand{\ten}{\otimes}
\begin{document}
\title[]{Tame algebras have dense $\bg$-vector fans}

\author{Pierre-Guy Plamondon}
\address{P.-G.P.~: Universit\'e Paris-Saclay, CNRS, Laboratoire de math\'ematiques d'Orsay, 91405, Orsay, France}
\email{pierre-guy.plamondon@universite-paris-saclay.fr}
\thanks{The first author was supported by French ANR grant CHARMS (ANR-19-CE40-0017-02).}

\author{Toshiya Yurikusa}
\address{T.Y.~: Mathematical Institute, Tohoku University, Aoba-ku, Sendai, 980-8578, Japan}
\email{toshiya.yurikusa.d8@tohoku.ac.jp}
\thanks{The second author was supported by JSPS KAKENHI Grant Number 20J00410.}

\address{B.~K.~: Universit\'e Paris de Paris\\
    UFR de Math\'ematiques\\
    CNRS\\
   Institut de Math\'ematiques de Jussieu--Paris Rive Gauche, IMJ-PRG \\   
    B\^{a}timent Sophie Germain\\
    75205 Paris Cedex 13\\
    France
}
\email{bernhard.keller@imj-prg.fr}
\urladdr{https://webusers.imj-prg.fr/~bernhard.keller/}

\keywords{}
\thanks{
}

\date{\today}

\subjclass[2010]{
}

\contrib[with an appendix by]{Bernhard Keller}

\begin{abstract}
 The~$\bg$-vector fan of a finite-dimensional algebra is a fan whose rays are the~$\bg$-vectors of its~$2$-term presilting objects.  We prove that the~$\bg$-vector fan of a tame algebra is dense.  We then apply this result to obtain a near classification of quivers for which the closure of the cluster~$\bg$-vector fan is dense or is a half-space, using the additive categorification of cluster algebras by means of Jacobian algebras.  As another application, we prove that for quivers with potentials arising from once-punctured closed surfaces, the stability and cluster scattering diagrams only differ by wall-crossing functions on the walls contained in a separating hyperplane. 
The appendix is devoted to the construction of truncated twist functors
and their adjoints.
\end{abstract}

\maketitle

\tableofcontents

\section{Introduction}

In the theory of cluster algebras,~$\bg$-vectors (introduced in~\cite{FominZelevinsky2007}) play an important role: they endow the cluster algebra with principal coefficients with a grading, and are computationally lighter than cluster variables.

The~$\bg$-vectors of a cluster algebra are the rays of a fan~$\cF^{\bg}_{\cluster}$ whose cones are generated by compatible~$\bg$-vectors.  As a consequence of results of~\cite{DerksenWeymanZelevinsky2}, this fan is essential, rational, polyhedral and simplicial.  The fan of~$\bg$-vectors plays an important role in the theory and appears in different guises in the literature: it appears in the tropical cluster~$\cX$-variety (or cluster Poisson variety) of~\cite{FockGoncharov}; it is a subfan of the underlying fan of the cluster scattering diagram of~\cite{GHKK} and of the stability scattering diagram of~\cite{bridgeland2017}; for cluster algebras of finite type, it is a complete fan~\cite{FominZelevinsky2003}, and is the normal fan of a polytope called the generalized associahedron~\cite{ChapotonFominZelevinsky,HohlwegPilaudStella,ArkaniHamedBaiHeYan,BrustleDouvilleMousavandThomasYildirim,PadrolPaluPilaudPlamondon}.

A related notion in representation theory is the~$\bg$-vector fan~$\cF^{\bg}_{\tsilt}$ of~$2$-term silting complexes over a finite-dimensional algebra~$\Lambda$.  It follows from the work of~\cite{AdachiIyamaReiten} that the fan~$\cF^{\bg}_{\tsilt}$ is an essential, rational, polyhedral, simplicial fan, and from~\cite{DemonetIyamaJasso} that it is complete if and only if the algebra~$\Lambda$ is~$\tau$-tilting finite.

In this paper, we are interested in the case where the~$\bg$-vector fans are not complete, but dense in the real space~$\bR^n$ which contains them.  An algebra such that~$\cF^{\bg}_{\tsilt}$ is dense was called \emph{$\bg$-tame} in~\cite{AokiYurikusa}.  Our main result is the following.

\begin{theorem}[Theorem \ref{theo::density}]\label{theo::mainIntro}
 Let~$\Lambda$ be a finite-dimensional basic algebra over an algebraically closed field.  If~$\Lambda$ is tame, then its fan~$\cF^{\bg}_{\tsilt}$ is dense in~$\bR^n$. 
 In other words, if~$\Lambda$ is tame, then~$\Lambda$ is~$\bg$-tame.
\end{theorem}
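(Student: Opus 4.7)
The plan is to recast density of the fan in terms of decorated representations (in the sense of Derksen--Weyman--Zelevinsky) and then input the tameness hypothesis via Crawley--Boevey's parameter-counting characterisation of tame algebras. By the Adachi--Iyama--Reiten correspondence, $2$-term presilting complexes are in bijection with basic $\tau$-rigid pairs $(M,P)$, with matching $\bg$-vectors, so density of $\cF^{\bg}_{\tsilt}$ in $\bR^n$ is equivalent to: every $v \in \bZ^n$ is a limit of $\bg$-vectors of $E$-rigid decorated representations. My first step would be to realise $v$ as $\bg(\cM)$ for some (possibly non-rigid) decorated representation $\cM$ and to pass to an irreducible component $Z$ of the representation variety containing $\cM$. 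If the generic $E$-invariant on $Z$ vanishes, a generic point of $Z$ is $E$-rigid with $\bg$-vector $v$, so $v\in\cF^{\bg}_{\tsilt}$ already; the problem thus reduces to approximating those $v$ realised only on components with positive generic $E$-invariant.

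This is where tameness enters. Crawley--Boevey's theorem supplies, in each fixed dimension, only finitely many one-parameter families of indecomposable $\Lambda$-modules, with all remaining indecomposables isolated (and hence rigid). Combined with the generic decomposition theorem of Crawley--Boevey--Schr\"oer, this forces any $Z$ with positive generic $E$-invariant to decompose generically as a direct sum of generically indecomposable summands, each of which is either rigid or lives in a $1$-parameter family. For each such $1$-parameter-family summand one should then produce nearby $E$-rigid decorated representations whose $\bg$-vectors approximate the summand's $\bg$-vector, using degenerations from the family to its boundary---where isolated rigid modules accumulate---and summing the contributions to obtain an approximating sequence of $E$-rigid decorated representations with $\bg$-vectors tending to $v$.

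The main obstacle is controlling the $\bg$-vector under these rigidifying deformations. Perturbing a module inside a single irreducible component does not change its $\bg$-vector, so one must jump between components in a way that suppresses the $E$-invariant while keeping the $\bg$-vector close; this requires an explicit, quantitative scheme, not just an existence statement. I expect this step to use mutation-type manipulations of decorated representations, for which the appendix's truncated twist functors and their adjoints look tailored: they should furnish the $\bg$-vector-controlled approximation procedure needed to rigidify summand by summand and to turn the qualitative parameter-count bound of Crawley--Boevey into the metric density statement of the theorem.
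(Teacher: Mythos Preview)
Your overall framework matches the paper's: decompose $\bg$ generically into presilting summands $\bg_i$ plus ``band'' summands $\bh_j$ coming from $1$-parameter families of $\tau$-fixed bricks (the paper uses the Geiss--Labardini-Fragoso--Schr\"oer refinement of Derksen--Fei, Theorem~\ref{theo::genericDecompositionForTame}, rather than Crawley-Boevey--Schr\"oer on module varieties), and then approximate. You also correctly locate the obstacle and point to twists as the relevant tool. But the mechanism you propose has a genuine gap.

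The ``degenerate to the boundary of the family'' idea cannot produce rigid approximants with the right $\bg$-vector: as you yourself note, moving inside a component fixes the $\bg$-vector, and a band component is generically non-rigid throughout, so no point of that component helps. More seriously, your plan to rigidify summand by summand and then sum ignores compatibility: even if each $\bh_j$ could individually be approximated by presilting $\bg$-vectors, nothing in your outline forces the approximants to be mutually $E$-orthogonal and orthogonal to the rigid part, so their direct sum need not be presilting and hence need not lie in $\cF^{\bg}_{\tsilt}$.

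The paper resolves this by \emph{not} rigidifying the bands at all. It takes the rigid part $G$ (with $[G]=\sum a_i\bg_i$), forms its Bongartz co-completion $G'$ so that $G\oplus G'$ is already silting, and then applies iterated cylinders $\cyl_{\Sigma H_j}$ to $G'$, where $H_j$ is a generic object with $[H_j]=\bh_j$. The cylinder of $U$ with respect to $\Sigma H$ is the cocone of the universal map $U\to (\Sigma H)^d$; its effect on $\bg$-vectors is $[U]\mapsto [U]+d[H]$. The technical core (Lemmas~\ref{lemm::commuting-cylinders} and~\ref{lemm::rigid-cylinders}) shows that, under the vanishing and brick conditions supplied by the generic decomposition, these cylinders commute with one another, preserve rigidity of $G'$, and preserve compatibility with $G$. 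One obtains, for every $m\geq 1$, a presilting object with $\bg$-vector $md\bg+[G']$, and the rays through these converge to the ray through $\bg$. So the twist is applied to the co-completion rather than to the bands, and compatibility is built in from the start rather than recovered afterwards; this is the missing idea in your sketch.
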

Theorem~\ref{theo::mainIntro} was previously known for path algebras of extended Dynkin type by the results of~\cite{Hille}; for representation-finite algebras by~\cite{DemonetIyamaJasso} (and in fact for any~$\tau$-tilting finite algebra, even if it is not tame); for Jacobian algebras associated to triangulated surfaces by~\cite{Yurikusa} (tame or not); for gentle algebras by~\cite{AokiYurikusa}; and for special biserial algebras by~\cite{AokiYurikusa} and~\cite{AsaiDemonetIyama}, both using different methods.

We note that the converse of Theorem~\ref{theo::mainIntro} is false: the quiver associated with a triangulation of a torus with one boundary component and one marked point on it admits a wild potential by~\cite{GeissLabardiniFragosoSchroer}, even though its~$\bg$-vector fan is dense by~\cite{Yurikusa}.  The class of~$\bg$-tame algebras is thus strictly larger than that of tame algebras. 

The related notion of~$\tau$-tilting tameness was introduced in~\cite{BrustleSmithTreffinger}: an algebra is~\emph{$\tau$-tilting tame} if the closure~$\overline{\Wall}$ of the union of the walls containing non-trivial stability conditions in~$\bR^n$ has measure zero.  Using recent results, it is not hard to see that~$\tau$-tilting tameness implies~$\bg$-tameness: by~\cite[Theorem 3.17]{Asai},~$\overline{\Wall}$ is the complement of the union of the interior of the maximal cones of~$\cF^{\bg}_{\tsilt}$, so if the latter is not dense,~$\overline{\Wall}$ contains the cone generated by an open subset of~$\bR^n$, and so has infinite measure.  We conjecture that~$\tau$-tilting tameness is equivalent to~$\bg$-tameness.  This, combined with Theorem~\ref{theo::mainIntro}, would imply that tame algebras are~$\tau$-tilting tame, as conjectured in~\cite[Conjecture 3.22]{BrustleSmithTreffinger}.  We do not discuss~$\tau$-tilting tameness further in this paper.

\smallskip

We provide two applications of Theorem~\ref{theo::mainIntro}.  The first is a near classification of quivers~$Q$ whose cluster algebras have dense~$\bg$-vector fans.
We say that a quiver~$Q$ is \emph{cluster-$\bg$-dense} in that case, and~\emph{half cluster-$\bg$-dense} if the closure of both its cluster fan and that of~$Q^{op}$ are half-spaces.
\begin{corollary}[Theorem \ref{theo::classify g-dense cluster}] \label{coro::mutationFiniteIntro}
 Let~$Q$ be a quiver without loops and $2$-cycles.  Assume that~$Q$ is not mutation equivalent to~$X_6, X_7$ (see Theorem~\ref{theo::classification of mutation finite}) or~$K_m$, with~$m\geq 3$ (see Example~\ref{ex::rank2}).  Then~$Q$ is mutation-finite if and only if it is cluster-$\bg$-dense or half cluster-$\bg$-dense.  In this case,
 \begin{itemize}
  \item $Q$ is half cluster-$\bg$-dense if it arises from a triangulation of a closed surface with exactly one puncture, and
  \item $Q$ is cluster-$\bg$-dense otherwise.
 \end{itemize}

\end{corollary}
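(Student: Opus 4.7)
The plan is to combine the Felikson--Shapiro--Tumarkin classification of mutation-finite quivers with Theorem~\ref{theo::mainIntro}, treating each case separately, and to rule out mutation-infinite quivers by a rank-reduction argument.

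\emph{Mutation-finite direction.} By the classification (and the excluded cases $X_6,X_7,K_m$), any mutation-finite $Q$ either arises from a triangulation of a marked surface or is mutation-equivalent to one of the exceptional types $E_n, \widetilde{E}_n, E_n^{(1,1)}$ or finite Dynkin. In each such case, one chooses a standard non-degenerate potential $W$ such that the Jacobian algebra $J(Q,W)$ is finite-dimensional and tame: for surfaces this is Labardini's potential, whose tameness is due to Geiss--Labardini--Schr\"oer; for the exceptional types this is classical or established case by case. Theorem~\ref{theo::mainIntro} then gives that $\cF^{\bg}_{\tsilt}(J(Q,W))$ is dense in $\bR^n$. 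Via the Plamondon-type categorification dictionary, cluster $\bg$-cones are identified with reachable $2$-silting cones. When $Q$ does not come from a once-punctured closed surface, every $2$-silting cone is reachable (by Yurikusa's results in the surface case, and by classical results otherwise), so $\cF^{\bg}_{\cluster}$ itself is dense and $Q$ is cluster-$\bg$-dense. When $Q$ comes from a once-punctured closed surface, the reachable cones fill only one side of the hyperplane separating the two tagging classes, and applying Theorem~\ref{theo::mainIntro} on each half yields half-cluster-$\bg$-density.

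\emph{Mutation-infinite direction.} I would use a rank-reduction principle (Seven, or the direct rank-$3$ classification): any mutation-infinite quiver $Q$ with at least three vertices and not mutation-equivalent to $K_m$ contains, after some sequence of mutations, a full subquiver $Q'$ on three vertices that is itself mutation-infinite. Since the hypotheses also exclude $X_6,X_7$, a direct enumeration of the rank-$3$ mutation-infinite quivers (Markov-type) shows that $\cF^{\bg}_{\cluster}(Q')$ has, on each side of every hyperplane in $\bR^3$, a full-dimensional open region disjoint from it. I would then lift this gap by means of the standard coefficient-freezing embedding of $\cF^{\bg}_{\cluster}(Q')$ as a section of $\cF^{\bg}_{\cluster}(Q)$, producing full-dimensional gaps of $\cF^{\bg}_{\cluster}(Q)$ on both sides of any candidate separating hyperplane, and thereby precluding both cluster-$\bg$-density and half-cluster-$\bg$-density.

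\emph{Main obstacle.} The mutation-infinite direction is the more delicate one: while the rank-$3$ non-density statement is straightforward (the Markov quiver suffices), lifting it to a full-dimensional gap in $\bR^n$ on both sides of every hyperplane requires careful control of the coefficient-freezing embedding, since $Q'$ appears only as a full subquiver of a mutation of $Q$ and not as a direct summand. The mutation-finite direction, by contrast, is conceptually a concatenation of Theorem~\ref{theo::mainIntro}, existing tameness results for Jacobian algebras, and the categorification dictionary, with the once-punctured closed case requiring the additional identification of the separating hyperplane.
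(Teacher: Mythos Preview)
Your mutation-finite direction is essentially the paper's argument: tameness of the Jacobian algebra (via Geiss--Labardini--Schr\"oer), Theorem~\ref{theo::mainIntro}, the categorification dictionary, and mutation-transitivity of $2$-silting complexes. One correction: for the once-punctured closed surface case, density of the whole $2$-silting fan together with the decomposition $\tsilt = \tsilt^+ \sqcup \tsilt^-$ does not by itself show that each piece closes up to a half-space; you still need to know that a separating hyperplane exists and that each side is exactly filled. The paper simply cites \cite[Theorem~1.2]{Yurikusa} for this.

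Your mutation-infinite direction has two concrete problems. First, the Markov quiver is mutation-\emph{finite} (it is the once-punctured torus), so it cannot serve as your rank-$3$ obstruction; in fact it is half cluster-$\bg$-dense. Second, and more structurally, the reduction target should be rank~$2$, not rank~$3$: any mutation-infinite quiver is mutation-equivalent to a quiver containing a full $K_m$ subquiver with $m\geq 3$, and $K_m$ is what you actually know to be non-$\bg$-dense (Example~\ref{ex::rank2}). The lifting step you flag as the main obstacle is then handled cleanly by \cite[Theorem~33]{Muller}, which says that if a full subquiver has a full-dimensional gap in its cluster fan, so does the ambient quiver. This replaces your coefficient-freezing argument entirely and avoids the need to track gaps on both sides of every hyperplane.
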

We conjecture that~$X_6$ and~$X_7$ behave no differently.

\begin{conjecture}
 \begin{enumerate}
  \item The~$\bg$-vector fan in type~$X_6$ is dense.
  \item The closure of the~$\bg$-vector fan in type~$X_7$ is a half-space.
 \end{enumerate}
\label{conj::intro}
\end{conjecture}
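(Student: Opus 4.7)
The plan is to reduce both parts of Conjecture~\ref{conj::intro} to Theorem~\ref{theo::mainIntro} applied to the Jacobian algebra of a well-chosen non-degenerate potential. The strategy has three main steps: (a) exhibit non-degenerate potentials $W_6$ on $X_6$ and $W_7$ on $X_7$ whose Jacobian algebras are finite-dimensional; (b) prove that these Jacobian algebras are tame; (c) transfer the density of $\cF^{\bg}_{\tsilt}$ granted by Theorem~\ref{theo::mainIntro} to the cluster $\bg$-fan via the additive categorification of the cluster algebra by $2$-term silting objects over the Jacobian algebra, whose $\bg$-vectors coincide with the cluster $\bg$-vectors.

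For part (1), step (b) is the main challenge. Since $X_6$ has finite mutation class, one can list the quivers in its mutation class explicitly and attempt to produce a potential whose Jacobian algebra falls into a known tame class, such as gentle, string, special biserial, or skewed-gentle; for each of these, dense $\bg$-fans have already been established by~\cite{AokiYurikusa,AsaiDemonetIyama} and provide a useful template. Alternatively, one could try to verify tameness directly from the indecomposable representations, possibly via reduction of the potential to a simpler Morita-equivalent algebra. Once tameness is established, Theorem~\ref{theo::mainIntro} yields density of $\cF^{\bg}_{\tsilt}(J(X_6,W_6))$ in $\bR^6$, and the categorification step completes the argument.

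For part (2), one needs the stronger conclusion that the closure of the cluster $\bg$-fan is a half-space. Modeled on the once-punctured closed surface case, one would identify a separating hyperplane by exhibiting a linear form $\ell \colon \bR^7 \to \bR$ such that $\ell(g) \leq 0$ for every $\bg$-vector $g$ of an indecomposable $2$-term presilting complex over $J(X_7, W_7)$, with equality attained by some such $\bg$-vectors; density inside the half-space $\{\ell \leq 0\}$ would then follow from Theorem~\ref{theo::mainIntro} applied to $J(X_7,W_7)$ together with an explicit analysis of the image of the cone. The main obstacle, beyond the tameness question shared with $X_6$, is to pinpoint the categorical origin of the separating hyperplane for $X_7$, which lacks the obvious topological source available for once-punctured surfaces; one could look for a distinguished indecomposable rigid summand playing the role of the tagged arcs at the puncture, and ideally tie the construction to an absence-of-reddening-sequence phenomenon. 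A combined computer-algebraic investigation of the finite mutation classes of $X_6$ and $X_7$ together with a careful representation-theoretic study of the Jacobian algebras $J(X_6,W_6)$ and $J(X_7,W_7)$ appears to be the most concrete line of attack.
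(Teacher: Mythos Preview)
The statement you are attempting to prove is a \emph{conjecture} in the paper; there is no proof of it to compare against. More importantly, your proposed line of attack cannot work as stated. Your step~(b) asks to show that the Jacobian algebras $J(X_6,W_6)$ and $J(X_7,W_7)$ are tame, so that Theorem~\ref{theo::mainIntro} applies. But the paper explicitly notes (see the remark following Conjecture~\ref{conj::typex}, citing \cite{GeissLabardiniFragosoSchroer}) that the Jacobian algebras associated with $X_6$ and $X_7$ are \emph{wild} for every non-degenerate potential. This is precisely why $X_6$ and $X_7$ are excluded from Corollary~\ref{coro::mutationFiniteIntro} and Theorem~\ref{theo::jacobi-tame}: they are the mutation-finite quivers for which no tame non-degenerate potential exists. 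Hence step~(b) fails, and with it the entire reduction to Theorem~\ref{theo::mainIntro}.

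Any successful approach to Conjecture~\ref{conj::intro} must therefore go beyond the tame case. One would need either a direct combinatorial analysis of the cluster $\bg$-vectors in types $X_6$ and $X_7$, or an extension of the density argument of Section~\ref{sect::densityOfFan} to a class of algebras strictly larger than tame (for instance, an appropriate notion capturing the generic decomposition behavior of Theorem~\ref{theo::genericDecompositionForTame} without assuming tameness). Your suggestion to look for a separating hyperplane in type $X_7$ via a linear form and to relate it to the absence of a reddening sequence is reasonable as a heuristic---indeed, the paper records that $\cF^{\bg}_{\cluster}(X_7)$ lies in an open half-space by \cite{Seven}---but the density within that half-space remains the hard part and cannot be obtained from tameness.
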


We prove Corollary~\ref{coro::mutationFiniteIntro} by applying Theorem~\ref{theo::mainIntro} and a result of~\cite{GeissLabardiniFragosoSchroer} that states that all mutation-finite quivers admit a tame non-degenerate potential, except for~$X_6$, $X_7$ and~$K_m$, with~$m\geq 3$.  We note that Corollary~\ref{coro::mutationFiniteIntro} was already known for quivers of Dynkin type~\cite{FominZelevinsky2003}, extended Dynkin type~\cite{Hille} and of surface type~\cite{Yurikusa}; thus the novelty here is the inclusion of the quivers of type~$E_6^{(1,1)}, E_7^{(1,1)}$ and~$E_8^{(1,1)}$.

\smallskip

The second application is a consequence of an argument due to L.~Mou~\cite{Mou} to get information on a conjecture of~\cite{KontsevichSoibelman}.

\begin{theorem}[Theorem \ref{theo::scatteringdiags}]\label{theo::scatteringIntro}
Let~$Q$ be a quiver without loops and~$2$-cycles, and let~$W$ be a non-degenerate potential on it.  Let~$\fD_Q$ be the cluster scattering diagram of~$Q$, and let~$\fD_{J(Q,W)}$ be the stability scattering diagram of the Jacobian algebra~$J(Q,W)$ (see Section~\ref{sect::scattering}).
 \begin{enumerate} 
\item If $Q$ is cluster-$\bg$-dense, then the cluster scattering diagram $\fD_Q$ is equal to the stability scattering diagram $\fD_{J(Q,W)}$.
\item If $Q$ is half cluster-$\bg$-dense, then $\fD_Q$ and $\fD_{J(Q,W)}$ only differ by functions on walls in the separating hyperplane.
\end{enumerate}
\end{theorem}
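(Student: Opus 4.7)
The plan is to exploit the assumed density of the cluster $\bg$-vector fan $\cF^{\bg}_{\cluster}$ together with a comparison argument due to L.~Mou. Both $\fD_Q$ and $\fD_{J(Q,W)}$ are consistent scattering diagrams whose incoming walls are the $n$ coordinate hyperplanes $\be_i^{\perp}$ with the same prescribed wall-crossing functions. By the uniqueness theorem of Kontsevich--Soibelman and Gross--Hacking--Keel--Kontsevich, each is the unique consistent completion of this initial data up to equivalence, so it suffices to show that they agree on a sufficiently large region of~$\bR^n$.

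I would first recall that every codimension-one face of a maximal cone of $\cF^{\bg}_{\cluster}$ is a wall of both diagrams carrying the same wall-crossing function. This is a consequence of the additive categorification of the cluster algebra by the Jacobian algebra $J(Q,W)$: the maximal cones of $\cF^{\bg}_{\cluster}$ are identified with the $2$-term silting chambers over $J(Q,W)$, and both scattering diagrams assign to such a wall the same generating series of indecomposable representations whose $\bg$-vectors lie on it. Next, the density hypothesis gives that in case~(1) $\cF^{\bg}_{\cluster}$ is dense in $\bR^n$, while in case~(2) its closure is a closed half-space bounded by the separating hyperplane. Mou's argument then propagates the agreement of wall-crossing functions from the internal walls of $\cF^{\bg}_{\cluster}$ to every wall lying in the interior of its closure, by following paths through chambers of $\cF^{\bg}_{\cluster}$ and invoking consistency of the diagram around each joint traversed. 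In case~(1) this forces $\fD_Q = \fD_{J(Q,W)}$; in case~(2) the only possible discrepancy remains on walls contained in the separating hyperplane, which is the stated conclusion.

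The main obstacle, and the delicate point of Mou's argument, is this propagation step: a priori, a wall lying in the complement of $\cF^{\bg}_{\cluster}$ could carry a nontrivial wall-crossing function consistent with the rest of the diagram, so density of the fan alone does not formally imply uniqueness of the completion outside it. One must use that chambers of $\cF^{\bg}_{\cluster}$ accumulate on any such wall in the interior of $\overline{\cF^{\bg}_{\cluster}}$, so that applying the consistency relation across small loops around the wall and passing to the limit forces its wall-crossing function to be determined by the already-known functions on walls of $\cF^{\bg}_{\cluster}$. This limit argument fails precisely on the separating hyperplane in case~(2), because chambers accumulate on it from only one side, which accounts for the remaining ambiguity.
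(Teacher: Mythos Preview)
Your proposal correctly identifies the main external input (Mou's result that the cluster chambers, with the wall-crossing functions on their facets, are common to both diagrams), but the mechanism you propose for extending agreement to arbitrary walls has a gap. You suggest a limit argument: since cluster chambers accumulate on any wall $\sigma$ in the interior of $\overline{\cF^{\bg}_{\cluster}}$, consistency around small loops together with a passage to the limit should determine the function on $\sigma$. But wall-crossing functions take values in the pro-unipotent group $\hat{G}$, and there is no topology on $\hat{G}$ induced by the geometry of $M_{\bR}$ in which such a limit makes sense. Consistency relates functions on walls sharing a common joint; cluster walls accumulating near a non-cluster wall $\sigma$ need not share any joint with $\sigma$, so the equations coming from your small loops do not a priori involve the function on $\sigma$ at all. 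Your invocation of the KS/GHKK uniqueness theorem is also a non-sequitur: if both diagrams were already known to be consistent completions of the \emph{same} initial data, uniqueness would finish the proof immediately, with nothing further to check.

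The paper avoids limits entirely by passing to a finite truncation. Both $\fD_Q$ and $\fD_J$ are inverse limits of $\fg^{\le k}$-scattering diagrams $\fD_Q^{\le k}$ and $\fD_J^{\le k}$, and for each fixed $k$ the underlying wall structure is a \emph{finite} polyhedral fan. Mou's theorem supplies a common subfan $\cF^{\bg}_k(Q)$, coming from the cluster chambers, on whose walls the two diagrams already agree. Density of $\cF^{\bg}_{\cluster}(Q)\cup\bigl(-\cF^{\bg}_{\cluster}(Q^{\op})\bigr)$ then forces every $n$-dimensional cone $\sigma^{\pm}$ of the level-$k$ fan to meet the interior of some maximal cluster cone, so one can connect $M_{\bR}^+$ to $\sigma^{\pm}$ by a generic path crossing only walls of $\cF^{\bg}_k(Q)$. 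For any wall $\sigma$ with adjacent chambers $\sigma^{\pm}$, the function $\phi^{\le k}(\sigma)$ is therefore (the quotient of) two path-ordered products, each a product of cluster wall functions; since those agree for the two diagrams, so does $\phi^{\le k}(\sigma)$. In the half-dense case the same argument applies on each side of the separating hyperplane, leaving only walls inside that hyperplane possibly different. No limits are taken: finiteness at each level $k$ does all the work.
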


This allows to obtain new information on the scattering diagrams of
cluster algebras from once-punctured closed surfaces.

\begin{corollary}\label{coro::scatteringIntro}
 For a cluster algebra arising from a once punctured closed
surface, the two scattering diagrams only differ by functions on the
walls of the separating hyperplane.
\end{corollary}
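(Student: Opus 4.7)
The plan is to deduce Corollary~\ref{coro::scatteringIntro} as a direct combination of Corollary~\ref{coro::mutationFiniteIntro} with part~(2) of Theorem~\ref{theo::scatteringIntro}. Let $S$ be a closed surface with exactly one puncture, let $T$ be an ideal triangulation of $S$, and let $Q = Q_T$ be the associated quiver. Since any two ideal triangulations of $S$ are related by flips and both scattering diagrams are mutation-equivariant, it suffices to work with a single choice of $T$.

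Next I would verify the hypotheses of the two inputs. First, $Q$ is mutation-finite by construction, and it is not mutation-equivalent to any of the exceptional quivers excluded from Corollary~\ref{coro::mutationFiniteIntro}: the quiver $K_m$ has only two vertices, whereas the smallest once-punctured closed surface (the once-punctured torus) already produces the Markov quiver on three vertices, and the types $X_6$, $X_7$ are known not to be of surface type. Second, by Labardini-Fragoso's construction, the quiver $Q$ carries a non-degenerate potential $W$, so that the Jacobian algebra $J(Q,W)$ and the stability scattering diagram $\fD_{J(Q,W)}$ are well-defined; this ensures that both diagrams appearing in the statement of Theorem~\ref{theo::scatteringIntro} make sense.

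With these checks in place, Corollary~\ref{coro::mutationFiniteIntro} applies and tells us that $Q$ is half cluster-$\bg$-dense, i.e.\ that the closure of $\cF^{\bg}_{\cluster}$ is a half-space bounded by a separating hyperplane. Feeding this into Theorem~\ref{theo::scatteringIntro}(2) then immediately yields the conclusion: the cluster scattering diagram $\fD_Q$ and the stability scattering diagram $\fD_{J(Q,W)}$ differ only by wall-crossing functions on walls contained in that separating hyperplane. All the content of the corollary lies upstream in Theorem~\ref{theo::mainIntro}, Corollary~\ref{coro::mutationFiniteIntro} and Theorem~\ref{theo::scatteringIntro}; the only bookkeeping to be done is confirming that once-punctured closed surface quivers genuinely fall under the \emph{half} cluster-$\bg$-dense branch of the classification rather than into any of the excluded cases, which is the step I would treat most carefully.
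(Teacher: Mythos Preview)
Your proposal is correct and follows essentially the same route as the paper: the corollary is stated in the introduction as an immediate consequence of Theorem~\ref{theo::scatteringIntro}(2) together with the classification in Corollary~\ref{coro::mutationFiniteIntro} (equivalently Theorem~\ref{theo::classify g-dense cluster}), which identifies once-punctured closed surface quivers as precisely the half cluster-$\bg$-dense case. Your additional care in checking that such quivers avoid the excluded types $K_m$, $X_6$, $X_7$ and in invoking Labardini-Fragoso's non-degenerate potential is appropriate and matches what the paper uses implicitly.
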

We note that Conjecture~\ref{conj::intro}, together with Theorem~\ref{theo::scatteringIntro}, would imply similar results in types~$X_6$ and~$X_7$.
Cases where the two scattering diagrams are known to differ by central elements are Jacobian algebras admitting a green-to-red sequence and the Jacobian algebra of a once-punctured torus~\cite{Mou}, and examples arising from del Pezzo surfaces~\cite{BeaujardManschotPioline}; the latter reference also contains example of scattering diagrams for which the property does not hold, as explained for example in the lecture~\cite{Bousseau} by Pierrick Bousseau.

The proof of Theorem~\ref{theo::mainIntro} relies on two main ingredients.  On one hand, it uses in an essential way the results on generic decompositions of~$\bg$-vectors obtained in~\cite{DerksenFei,plamondon2013} and their more precise formulation for tame algebras shown in~\cite{GeissLabardiniFragosoSchroer2} (see Theorem~\ref{theo::genericDecompositionForTame}).  On the other hand, it relies on an operation~$\cyl_X$ on the objects of the homotopy category~$K^b(\proj\Lambda)$.  The operation~$\cyl_X$ is reminiscent of a spherical twist with respect to an object~$X$~\cite{SeidelThomas}, with the important caveat that~$\cyl_X$ is not an auto-equivalence; in fact, it is not even a functor.  The main difficulty in our proof of Theorem~\ref{theo::mainIntro} is to show that~$\cyl_X$, despite not being as good as a twist functor, behaves nicely enough on a subcategory of~$K^b(\proj\Lambda)$.  

Our strategy is similar to (and partly inspired by) the one used in~\cite{AokiYurikusa}, where Theorem~\ref{theo::mainIntro} is proved for completed gentle algebras by using the geometric model of~\cite{OpperPlamondonSchroll}.  The idea there is to notice that the boundary of the~$\bg$-vector fan is governed by the~$\bg$-vectors of band objects, which correspond to closed curves in the geometric model, and to approach this boundary by inflicting successive Dehn twists on arcs with respect to those closed curves.  Our operators~$\cyl_X$ should be seen as an ``algebraic counterpart'' of these Dehn twists, and indeed the two notions coincide in the case of gentle algebras.
Under suitable vanishing conditions, the action of the operator $\cyl_X$ coincides
with that of a truncated dual twist functor as constructed in the appendix.

This paper is organized as follows.  In Section~\ref{sect::recollections}, we collect definitions and results on generic decomposition of~$\bg$-vectors and varieties of representations of an algebra which will be needed in the paper.  In Section~\ref{sect::gVectorFansAndTameAlgebras}, we define the~$\bg$-vector fan of~$2$-term silting objects of an algebra, and we state a result of~\cite{GeissLabardiniFragosoSchroer2} on the generic decomposition of~$\bg$-vectors for tame algebras (Theorem~\ref{theo::genericDecompositionForTame}).  In Section~\ref{sect::densityOfFan}, we introduce the operator~$\cyl_X$ and prove Theorem~\ref{theo::mainIntro}.  We then apply our results to the density of~$\bg$-vector fans of cluster algebras in Section~\ref{sect::densitiForMutationFinite} and to scattering diagrams in Section~\ref{sect::scattering}.

\section*{Notations and conventions}
In this paper,~$k$ will always denote an algebraically closed field.  All modules will be right modules unless stated otherwise.  Arrows in a quiver are composed from right to left, as are morphisms in a category.  The suspension functor of a triangulated category will always be denoted by~$\Sigma$.  In any category~$\cC$, the set of morphisms from an object~$A$ to an object~$B$ will be denoted by~$\Hom{\cC}(A,B)$, or sometimes~$(A,B)$ to save space.  A general element of a (quasi-projective) variety will always mean an element of a dense open subset of the variety.

\section{Recollections on~$\bg$-vectors and varieties of representations}
\label{sect::recollections}

In this section, we fix a finite-dimensional basic~$k$-algebra~$\Lambda$.  

\subsection{Generic decomposition of~$\bg$-vectors}

Denote by~$\Lambda = \bigoplus_{i=1}^n P_i$ a decomposition of~$\Lambda$ as direct sum of pairwise non-isomorphic indecomposable projective right~$\Lambda$-modules.  
We will denote by~$K^b(\proj \Lambda)$ the homotopy category of bounded complexes of finitely generated projective right~$\Lambda$-modules.  We denote by~$K^{[-1,0]}(\proj \Lambda)$ the full subcategory whose objects are complexes concentrated in degrees~$-1$ and~$0$.  An object~$P$ in~$K^{[-1,0]}(\proj \Lambda)$ will be denoted by
\[
 P= P^{-1} \xrightarrow{f} P^0;
\]
by abuse of terminology, we will sometimes identify the object~$P$ with the morphism~$f$.

The Grothendieck group of the triangulated category~$K^b(\proj \Lambda)$ will be denoted by~$K_0(\proj\Lambda)$; we note that it is a free abelian group with basis the images of the indecomposable projectives~$P_1, \ldots, P_n$.  The image of an object~$X$ in the Grothendieck group will be denoted by~$[X]$.  The basis~$[P_1], \ldots, [P_n]$ gives a natural identification of~$K_0(\proj\Lambda)$ with~$\bZ^n$, which we will often use implicitly.

\begin{definition}
 A~\emph{$\bg$-vector} is an element of~$K_0(\proj\Lambda)$.  The~$\bg$-vector of an object~$P$ of~$K^{[-1,0]}(\proj \Lambda)$ is the element~$[P]\in K_0(\proj\Lambda)$.
\end{definition}

For any~$\bg\in K_0(\proj\Lambda)$, we let~$P^{\bg_+}$ and~$P^{\bg_-}$ be the unique finitely generated projective modules without common non-zero direct summands (up to isomorphism) such that~$\bg = [P^{\bg_+}] - [P^{\bg_-}]$.

\begin{definition}
 Let~$\bg,\bg'\in K_0(\proj\Lambda)$. We denote by~$e(\bg,\bg')$ the minimal value of
 \[
  \dim \Hom{K^b(\proj\Lambda)}(P,\Sigma P'),
 \]
 where~$P$ and~$P'$ are objects of~$K^{[-1,0]}(\proj\Lambda)$ such that~$[P]=\bg$ and~$[P']=\bg'$.
\end{definition}

The following result is due to H.~Derksen and J.~Fei~\cite[Theorem 4.4]{DerksenFei}.  We use the formulation given in~\cite[Theorem 2.7]{plamondon2013}.

\begin{theorem}[Generic decomposition of~$\bg$-vectors]\label{theo::generic2termcomplexes}
 Any~$\bg\in K_0(\proj\Lambda)$ can be written as
 \[
  \bg = \bg_1 + \ldots + \bg_r,
 \]
 where
 \begin{enumerate}
  \item for each~$i\in\{1, \ldots, r\}$, a general element of~$\Hom{\Lambda}(P^{(\bg_i)_-}, P^{(\bg_i)_+})$ is indecomposable;
  \item for each~$i,j\in \{1, \ldots, r\}$ with~$i\neq j$, we have that~$e(\bg_i, \bg_j) = 0$.
 \end{enumerate}
 Moreover, the elements~$\bg_1, \ldots, \bg_r$ are unique for these properties (up to re-ordering), and a general element of~$\Hom{\Lambda}(P^{\bg_-}, P^{\bg_+})$ is a direct sum of elements in~\[\Hom{\Lambda}(P^{(\bg_1)_-}, P^{(\bg_1)_+}), \ldots, \Hom{\Lambda}(P^{(\bg_r)_-}, P^{(\bg_r)_+}).\]
\end{theorem}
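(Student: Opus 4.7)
The proof will follow the classical Schofield and Crawley-Boevey-Schr\"oer strategy for canonical decomposition of dimension vectors, transposed to the setting of 2-term projective complexes. For each $\bg \in K_0(\proj\Lambda)$, the affine space $V_\bg := \Hom{\Lambda}(P^{\bg_-}, P^{\bg_+})$ is irreducible, and the algebraic group $G_\bg := \Aut{\Lambda}(P^{\bg_-}) \times \Aut{\Lambda}(P^{\bg_+})$ acts on it by $(a, b) \cdot f = b \circ f \circ a^{-1}$, with orbits corresponding to isomorphism classes of 2-term complexes having $\bg$-vector $\bg$.

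The first step is to stratify $V_\bg$ by indecomposable direct-sum type. For an ordered tuple $(\bg_1, \ldots, \bg_r)$ summing to $\bg$ whose components admit indecomposable generic representatives in each $V_{\bg_i}$, consider the morphism
\[
\Phi_{\bg_1, \ldots, \bg_r} \colon G_\bg \times_{\prod_i G_{\bg_i}} \prod_i V_{\bg_i} \to V_\bg, \quad (g, (f_i)) \mapsto g \cdot (f_1 \oplus \ldots \oplus f_r),
\]
whose image $Z_{\bg_1, \ldots, \bg_r}$ is constructible. Only finitely many tuples contribute because any indecomposable summand of a complex in $V_\bg$ has $\bg$-vector whose $\pm$ parts are bounded by those of $\bg_\pm$. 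These finitely many strata cover $V_\bg$, and by its irreducibility, exactly one stratum is dense. This yields the existence of a decomposition $\bg = \bg_1 + \ldots + \bg_r$ satisfying condition~(1), the final direct-sum statement of the theorem, and uniqueness up to reordering.

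The second step is to establish condition~(2) via a Voigt-type formula: for $f \in V_\bg$ with corresponding complex $P_f$, one shows
\[
\codim(G_\bg \cdot f, V_\bg) = \dim \Hom{K^b(\proj\Lambda)}(P_f, \Sigma P_f).
\]
The argument identifies the image of the differential of the action map at $f$ as $\{b \circ f - f \circ a : a \in \End{\Lambda}(P^{\bg_-}),\ b \in \End{\Lambda}(P^{\bg_+})\}$ and recognizes its cokernel as degree-$1$ chain maps modulo null-homotopies, i.e., morphisms $P_f \to \Sigma P_f$ in $K^b(\proj\Lambda)$. Applying this both to $V_\bg$ and to each $V_{\bg_i}$, and accounting for the generic fiber of $\Phi_{\bg_1, \ldots, \bg_r}$, one derives
\[
\dim Z_{\bg_1, \ldots, \bg_r} = \dim V_\bg - \sum_{i \neq j} \dim \Hom{K^b(\proj\Lambda)}(P_{f_i}, \Sigma P_{f_j})
\]
for generic $f_i \in V_{\bg_i}$. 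Since the left-hand side equals $\dim V_\bg$ by density, each off-diagonal $\Hom$ vanishes; as $f_i, f_j$ are generic, this gives $e(\bg_i, \bg_j) = 0$ for $i \neq j$.

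The main obstacle will be establishing the dimension formula for $Z_{\bg_1, \ldots, \bg_r}$: one must carefully account for the dimensions of $G_\bg$, the stabilizer of a generic element, and the generic fibers of $\Phi$, and identify the off-diagonal contributions as $\Hom{K^b(\proj\Lambda)}$'s into the suspension. This is the 2-term complex analogue of Schofield's classical formula for quiver representations, and its verification requires careful management of homotopy category identifications throughout.
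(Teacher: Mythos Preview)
The paper does not give its own proof of this theorem: it is quoted as a result of Derksen--Fei \cite[Theorem 4.4]{DerksenFei}, reformulated as in \cite[Theorem 2.7]{plamondon2013}, and used as a black box. So there is no in-paper argument to compare against.

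That said, your sketch is the correct strategy and is essentially the one carried out in those references. The Voigt-type identity you state is right: for $f\in V_\bg$, the tangent space to $G_\bg\cdot f$ is $\{bf-fa\}$, and its cokernel in $V_\bg$ identifies with $\Hom{K^b(\proj\Lambda)}(P_f,\Sigma P_f)$, since chain maps $P_f\to\Sigma P_f$ are arbitrary elements of $\Hom{\Lambda}(P^{\bg_-},P^{\bg_+})$ and the null-homotopic ones are exactly $\{bf-fa\}$. Your finiteness claim is also fine once one observes that an indecomposable summand $Q^{-1}\to Q^0$ of a minimal $f\in V_\bg$ cannot have $Q^{-1}$ and $Q^0$ sharing a projective, since that projective would then lie in both $P^{\bg_-}$ and $P^{\bg_+}$; hence the summand's $\bg$-vector has its positive and negative parts bounded by $\bg_+$ and $\bg_-$.

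The one point to be careful about is the dimension formula for $Z_{\bg_1,\ldots,\bg_r}$. A clean way to obtain it is to compute the generic fibre of $\Phi$: for generic $f=\bigoplus f_i$ (with the $f_i$ generic and, after handling permutations, pairwise non-isomorphic), the fibre identifies with $\{g\in G_\bg : g^{-1}f\in D\}/\prod_i G_{\bg_i}$, where $D=\prod_i V_{\bg_i}$. The map $g\mapsto g^{-1}f$ sends this onto $D\cap (G_\bg\cdot f)=\prod_i(G_{\bg_i}\cdot f_i)$ with fibres cosets of $\mathrm{Stab}_{G_\bg}(f)$. Putting the dimensions together and using Voigt for $f$ and for each $f_i$ gives exactly
\[
\dim Z_{\bg_1,\ldots,\bg_r}=\dim V_\bg-\sum_{i\neq j} e(\bg_i,\bg_j),
\]
so density forces each off-diagonal $e(\bg_i,\bg_j)$ to vanish. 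Your identification of this step as the crux is accurate, and the computation goes through without surprises.
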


\begin{definition}
 Let~$\bg\in K_0(\proj\Lambda)$. The decomposition of~$\bg$ given in Theorem~\ref{theo::generic2termcomplexes} is the \emph{generic decomposition} of~$\bg$.  If~$r=1$, then~$\bg$ is \emph{generically indecomposable}.
\end{definition}

\subsection{Varieties of representations and generically~$\tau$-reduced components}
Since~$k$ is algebraically closed and~$\Lambda$ is basic, we can assume that~$\Lambda$ is the quotient of the path algebra of a finite quiver~$Q$ by an admissible ideal~$I$ (see, for instance, \cite{AssemSimsonSkowronski}).  Let~$e_1, \ldots, e_n$ be the paths of length~$0$ corresponding to the vertices of~$Q$; we can assume that~$P_i = e_i\Lambda$ for each~$i\in \{1, \ldots, n\}$.  Let~$S_1, \ldots, S_n$ be the simple tops of~$P_1, \ldots, P_n$, respectively.

We denote by~$\MOD \Lambda$ the category of finitely generated right~$\Lambda$-modules, and by~$K_0(\MOD \Lambda)$ its Grothendieck group.  Note that~$K_0(\MOD \Lambda)$ is a free abelian group, and that~$[S_1], \ldots, [S_n]$ form a basis for it.  Finally, let~
\[
 K_0(\MOD \Lambda)^{\oplus} := \big\{ \sum_{i=1}^n a_i [S_i] \ | \ a_1, \ldots, a_n\in \bZ_{\geq 0} \big\}.
\]

\begin{definition}
 A \emph{dimension vector} is an element of~$K_0(\MOD \Lambda)^{\oplus}$.  The \emph{dimension vector of a module}~$M$ is the element~$[M]\in K_0(\MOD \Lambda)^{\oplus}$. 
\end{definition}

For any~$\bd\in K_0(\MOD \Lambda)^{\oplus}$, we let~
\[
 \rep_{\bd}(Q^{\op}) := \bigoplus_{\alpha\in Q_1} \Hom{k}(k^{d_{t(\alpha)}}, k^{d_{s(\alpha)}}).
\]
It is an affine space whose points correspond to representations of the quiver~$Q^{\op}$ with dimension vector~$\bd$.  Inside it is the Zariski-closed subset $\rep_{\bd}(\Lambda)$ of representations satisfying the relations in the ideal~$I$; points in~$\rep_{\bd}(\Lambda)$ correspond to right~$\Lambda$-modules~$M$ such that~$[M] = \bd$.

The affine algebraic group~$\GL_{\bd} := \prod_{i\in Q_0} \GL_{d_i}(k)$ acts on~$\rep_{\bd}(\Lambda)$ by
\[
 (g_i)_{i\in Q_0} \cdot (f_{\alpha})_{\alpha\in Q_1} = (g_{s(\alpha)}f_{\alpha}g_{t(\alpha)}^{-1})_{\alpha\in Q_1}.
\]
The~$\GL_{\bd}$-orbits of~$\rep_{\bd}(\Lambda)$ are in bijection with isomorphism classes of right~$\Lambda$-modules with dimension vector~$\bd$.  The orbit of a point~$M$ will be denoted by~$\cO_M$.  By abuse of notation, we will identify a point~$M$ in~$\rep_{\bd}(\Lambda)$ with the module that it represents.

Let~$\cZ$ be an irreducible component of~$\rep_{\bd}(\Lambda)$.  It is known that~$\cZ$ is stable under the action of~$\GL_{\bd}$, and that for any point~$M\in \cZ$,
\[
 \codim_{\cZ} \cO_M \leq \dim_k \Ext{1}{\Lambda}(M,M) \leq \dim_k\Hom{\Lambda}(M, \tau M),
\]
where~$\tau$ is the Auslander--Reiten translation (here, the first inequality follows from Voigt's lemma \cite[Proposition 1.1]{Gabriel1974} and the second one from the Auslander--Reiten duality \cite[Theorem IV.2.13]{AssemSimsonSkowronski}).  The following definition is a dualized version of the notion of \emph{strongly reduced component} which was introduced by C.~Geiss, B.~Leclerc and J.~Schr\"oer in the context of additive categorification of cluster algebras and of G.~Lusztig's dual semicanonical bases.

\begin{definition}[Section 7.1 of \cite{GeissLeclercSchroer2012}]
 Let~$\bd$ be a dimension vector.  An irreducible component~$\cZ$ of~$\rep_{\bd}(\Lambda)$ is \emph{generically~$\tau$-reduced} if, for a general~$M\in \cZ$, we have that
 \[
  \codim_{\cZ} \cO_M = \dim_k\Hom{\Lambda}(M, \tau M).
 \]
\end{definition}

The next result states that generically~$\tau$-reduced components are parametrized by~$\bg$-vectors (in the case of~$\tau$-rigid modules, this result is due to~\cite{DehyKeller}; see also~\cite{DerksenFei} for generic decompositions of projective presentations).

\begin{theorem}[Theorem 1.2 of~\cite{plamondon2013}]\label{theo::genericCokernels}
 Let~$\bg\in K_0(\proj\Lambda)$.  There is an open dense subset~$\cU$ of~$\Hom{\Lambda}(P^{\bg_-}, P^{\bg_+})$ such that 
 \begin{enumerate}
  \item there is a dimension vector~$\bd\in K_0(\MOD\Lambda)^{\oplus}$ such that for any~$f\in \cU$, we have that~$[\Coker f] = \bd$;
  \item the union of the orbits of the cokernels of all~$f\in \cU$ is a dense subset of a generically~$\tau$-reduced component~$\cZ$ of~$\rep_{\bd}(\Lambda)$.
 \end{enumerate}
  Moreover, all generically~$\tau$-reduced components arise in this way, and two vectors~$\bg, \bg'\in K_0(\proj\Lambda)$ give rise to the same generically~$\tau$-reduced component if and only if their generic decompositions are the same, up to non-positive generically indecomposable terms.
\end{theorem}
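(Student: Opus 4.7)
My plan is to construct the component $\cZ$ by parametrizing cokernels of general maps $P^{\bg_-}\to P^{\bg_+}$, and then to verify by a dimension count that $\cZ$ is generically $\tau$-reduced. First, I fix $\bg\in K_0(\proj\Lambda)$, set $V := \Hom{\Lambda}(P^{\bg_-}, P^{\bg_+})$, and observe that for each simple $S_i$ one has
\[
\dim_k \Hom{\Lambda}(\Coker f, S_i) \;=\; \dim_k \Hom{\Lambda}(P^{\bg_+}, S_i)\;-\;\rank \Hom{\Lambda}(f, S_i),
\]
which is upper semicontinuous in $f$. Consequently $[\Coker f]$ is constant equal to some $\bd\in K_0(\MOD\Lambda)^{\oplus}$ on a dense open $\cU\subseteq V$, proving~(1). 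To produce~(2), I form an incidence variety: the total space $E\to\cU$ of the $\GL_{\bd}$-torsor of basis identifications $\Coker f \iso k^{\bd}$ carries a $\GL_{\bd}$-equivariant morphism to $\rep_\bd(\Lambda)$ (via transport of module structure), whose image $\cZ^\circ$ is an irreducible, constructible, $\GL_{\bd}$-stable subset. Take $\cZ:=\overline{\cZ^\circ}$.

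The crux of the argument is then the dimension count. For $M\in\cZ^\circ$ general, the fibre $F_M :=\{f\in V : \Coker f\cong M\}$ is an orbit of $\Aut{\Lambda}(P^{\bg_-})\times \Aut{\Lambda}(P^{\bg_+})$, and after a careful comparison of its stabilizer with the algebra of self-maps of the complex $P=(P^{\bg_-}\xrightarrow{f}P^{\bg_+})$ modulo null-homotopies, one obtains
\[
\codim_\cZ \cO_M \;=\; \dim_k V \;-\; \dim_k F_M \;=\; \dim_k \Hom{K^b(\proj\Lambda)}(P,\Sigma P).
\]
When $P$ is the minimal presentation of $M$ — which holds for general $f$ — the Auslander--Reiten-type identity
\[
\dim_k \Hom{K^b(\proj\Lambda)}(P,\Sigma P) \;=\; \dim_k \Hom{\Lambda}(M,\tau M)
\]
yields $\codim_\cZ\cO_M = \dim_k \Hom{\Lambda}(M,\tau M)$. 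Voigt's lemma provides the reverse inequality inside any irreducible component containing $\cO_M$, forcing $\cZ$ to be maximal, hence an irreducible component, and generically $\tau$-reduced.

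For the remaining two assertions, any generically $\tau$-reduced component $\cZ'\subseteq\rep_\bd(\Lambda)$ arises in this way: a general $M\in\cZ'$ admits a minimal projective presentation $P^{-1}\xrightarrow{f}P^0\to M\to 0$, and setting $\bg := [P^0]-[P^{-1}]$, the component produced from $\bg$ by the construction above contains $\cO_M$ and hence equals $\cZ'$. Finally, a non-positive generically indecomposable summand of $\bg$ is necessarily of the form $-[Q]$ with $Q$ indecomposable projective, corresponding to a direct summand $Q\to 0$ of the general complex with vanishing cokernel; adding or removing such terms leaves the generic cokernel (and hence $\cZ$) unchanged. Conversely, the non-$(-[Q])$ summands of the generic decomposition recover, via Theorem~\ref{theo::generic2termcomplexes}, the minimal projective presentation of the generic cokernel, and therefore $\cZ$.

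The principal technical obstacle is the dimension count in the second paragraph — verifying that a general $f\in\cU$ yields a minimal presentation of its cokernel, and correctly balancing the contributions from $\Aut{\Lambda}(P^{\bg_+})$, $\Aut{\Lambda}(P^{\bg_-})$, the homotopies $P^{\bg_+}\to P^{\bg_-}$, and the possible presence of projective direct summands in $M$. Once this bookkeeping is in place, the identification of $\dim\Hom{K^b(\proj\Lambda)}(P,\Sigma P)$ with $\dim\Hom{\Lambda}(M,\tau M)$ is a matter of standard derived-category manipulations built on Auslander--Reiten duality.
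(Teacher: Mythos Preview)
This theorem is not proved in the present paper: it is quoted as Theorem~1.2 of \cite{plamondon2013} and used as input for the rest of the argument. There is therefore no proof here to compare your proposal against.

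For what it is worth, your outline follows the strategy of the cited reference: parametrize cokernels, pass through an incidence variety, and reduce the generically $\tau$-reduced condition to the equality $\codim_\cZ\cO_M=\dim\Hom{K^b(\proj\Lambda)}(P,\Sigma P)=\dim\Hom{\Lambda}(M,\tau M)$. One caution on your sketch: the assertion that a general $f$ yields the \emph{minimal} presentation of its cokernel is not true for arbitrary $\bg$; it fails exactly when the generic decomposition of $\bg$ contains terms of the form $[P_i]$ or $-[P_i]$, and this is precisely what the clause ``up to non-positive generically indecomposable terms'' is tracking. Relatedly, the identity between $\dim\Hom{K^b(\proj\Lambda)}(P,\Sigma P)$ and $\dim\Hom{\Lambda}(M,\tau M)$ (which is \cite[Lemma~2.6]{plamondon2013}, also invoked in the present paper) does not require minimality but does require $P$ to have no summand in $\add(\Sigma\Lambda)$; so the bookkeeping you flag as the ``principal technical obstacle'' is genuine and must be done with care before the dimension count goes through.
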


\section{$\bg$-vector fans and tame algebras}
\label{sect::gVectorFansAndTameAlgebras}

\subsection{$\bg$-vector fans}

The~$\bg$-vectors of presilting objects of~$K^b(\proj\Lambda)$ are arranged into a structure called a simplicial fan.  We recall here the main definitions on fans, and refer to~\cite{ziegler1995} for a general treatment.

\begin{definition}
 Let~$d\in\bZ_{>0}$.  
 \begin{enumerate}
  \item For any non empty subset~$X$ of~$\bR^d$, a \emph{supporting hyperplane} of~$X$ is a hyperplane~$H$ of~$\bR^d$ such that~$H\cap X$ is non-empty and such that~$X$ is contained in one of the two half-spaces defined by~$H$.
  
  \item A \emph{polyhedral cone} in~$\bR^d$ is a set of the form
        \[
         C(c_1, \ldots, c_r) =  \big\{ \sum_{i=1}^r \lambda_i c_i \ | \ \lambda_i\in \bR_{\geq 0} \textrm{ for all $i$} \big\},   
        \]
        where~$c_1, \ldots, c_r$ are vectors in~$\bR^d$.          
        Equivalently, a polyhedral cone is the intersection of finitely many linear half-spaces.  
        The \emph{dimension} of~$C(c_1, \ldots, c_r)$ is the dimension of the vector space spanned by~$c_1, \ldots, c_r$.
        A cone~$C(c_1, \ldots, c_r)$ is \emph{simplicial} if~$c_1, \ldots, c_r$ are linearly independent.
        
  \item The \emph{faces} of a polyhedral cone~$C(c_1, \ldots, c_r)$ are its intersections with its supporting hyperplanes.  A \emph{ray} of a cone is a face of dimension~$1$; a \emph{facet} is a face of codimension~$1$.
  
  \item A \emph{polyhedral fan} in~$\bR^d$ is a set~$\cF$ of polyhedral cones in~$\bR^d$ such that
    \begin{itemize}
     \item if~$C\in \cF$, then any face of~$C$ is in~$\cF$; and
     \item the intersection of two cones in~$\cF$ is a face of both.
    \end{itemize}
  
  \item A polyhedral fan is \emph{simplicial} if all its cones are simplicial.  It is \emph{complete} if the union of its cones is~$\bR^d$.  It is \emph{essential} if it contains the cone~$\{0\}$.
 \end{enumerate}
\end{definition}

The following follows from the fundamental results of~$\tau$-tilting theory, see~\cite{AdachiIyamaReiten}.

\begin{theorem}[\cite{AdachiIyamaReiten}]\label{theo::silting-gvector-fan}
 Let~$\Lambda$ be any basic finite-dimensional~$k$-algebra.  Then the~$\bg$-vectors of indecomposable presilting objects of~$K^{[-1,0]}(\proj \Lambda)$ are the rays of an essential simplicial polyhedral fan whose maximal cones are~$C(g_1, \ldots, g_n)$, where~$g_1, \ldots, g_n$ are the~$\bg$-vectors of pairwise non-isomorphic indecomposable objects~$S_1, \ldots, S_n$ such that~$S = \bigoplus_{i=1}^n S_i$ is a silting object.
\end{theorem}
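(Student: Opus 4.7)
The plan is to unpack the theorem from the foundational results of 2-term silting theory developed in~\cite{AdachiIyamaReiten}. The key inputs I would invoke are: (i) every 2-term presilting object admits a Bongartz-type completion to a 2-term silting object; (ii) every basic 2-term silting object has exactly $n$ non-isomorphic indecomposable summands; (iii) the $\bg$-vectors of these indecomposable summands form a $\mathbb{Z}$-basis of $K_0(\proj\Lambda)$; and (iv) any two basic 2-term silting objects are connected through a sequence of single-summand mutations, and the $\bg$-vectors of the summands of a single silting object are sign-coherent.

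With these in place, three of the four conditions for a simplicial fan follow quickly. For \emph{simpliciality}, the cone $C(g_1,\ldots,g_r)$ associated with a presilting object $T=\bigoplus_{i=1}^{r}T_i$ extends via~(i) to a cone $C(g_1,\ldots,g_n)$ of a silting object, and~(iii) ensures linear independence, hence $g_1,\ldots,g_r$ are linearly independent. For \emph{closure under faces}, the faces of the simplicial cone $C(g_1,\ldots,g_r)$ are the sub-cones $C(g_{i_1},\ldots,g_{i_s})$ spanned by subsets of its rays, and any direct summand of a presilting object is itself presilting. For \emph{essentialness}, the zero cone corresponds to the zero presilting object, so $\{0\}$ is a face of every cone in the collection. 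The identification of rays with $\bg$-vectors of indecomposable presilting objects is immediate since those are exactly the 1-summand cases.

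The main obstacle is to show that the intersection of two maximal cones is a common face. Given 2-term silting objects $T=\bigoplus_{i=1}^{n}T_i$ and $T'=\bigoplus_{j=1}^{n}T'_j$, I would prove that $C([T_1],\ldots,[T_n])\cap C([T'_1],\ldots,[T'_n])$ equals the cone spanned by the $\bg$-vectors of the indecomposable summands common to $T$ and $T'$, which is a face of both by the face step above. The inclusion $\supseteq$ is immediate. For $\subseteq$, I would apply $\tau$-tilting reduction in the sense of Jasso to cut down to the case where $T$ and $T'$ share no common indecomposable summand, and then invoke Theorem~\ref{theo::generic2termcomplexes}: a point in the intersection admits two non-negative rational combinations as sums of $\bg$-vectors of indecomposable summands, which are generically indecomposable and pairwise $e$-orthogonal (because $T$ and $T'$ are presilting, so $\Hom{K^b(\proj\Lambda)}(T_i,\Sigma T_k)=0$ and similarly for $T'$), so by uniqueness of the generic decomposition the two expressions would coincide, forcing a common summand and producing the required contradiction. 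Extending from maximal cones to arbitrary cones is then routine by taking faces, which together with~(i) shows that every cone in the collection is a face of a maximal one.
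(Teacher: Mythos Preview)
The paper does not give a proof of this theorem; it simply attributes the result to \cite{AdachiIyamaReiten} (the fan property is made fully explicit only in later references such as \cite{DemonetIyamaJasso}). So there is no ``paper's proof'' to compare against, and your attempt should be judged on its own merits.

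Your argument is essentially sound, and the idea of deducing the intersection property from the uniqueness of the generic decomposition (Theorem~\ref{theo::generic2termcomplexes}) is a nice and somewhat non-standard route. Two points deserve attention. First, the Jasso reduction step is an unnecessary detour and, as written, a gap: reducing by the common summand $U$ passes to a different algebra with a different Grothendieck group, and you would still owe an argument that the intersection of the original cones is controlled by the intersection of the reduced cones. You can bypass this entirely: for $\bg$ in $C([T_1],\ldots,[T_n])\cap C([T'_1],\ldots,[T'_n])$, scale so both expressions have non-negative integer coefficients; each $[T_i]$ is generically indecomposable (the orbit of a rigid $2$-term complex is open, and a minimal presilting complex has no common summand between its two terms) and the summands are pairwise $e$-orthogonal, so both expressions are \emph{the} generic decomposition of $\bg$; uniqueness then forces the summands with positive coefficient to agree, placing $\bg$ in the face spanned by common summands. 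This handles arbitrary cones, not just maximal ones, so your final ``routine extension'' is subsumed. Second, your ingredient (iv) is partly false (two basic $2$-term silting objects need not be mutation-connected in general) and in any case unused; you should drop it.

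For context, the more standard proofs in the literature proceed differently, via sign-coherence of $c$-vectors and the resulting chamber description, or via King stability and the wall-and-chamber structure (\cite{DemonetIyamaJasso}, \cite{Asai}); your generic-decomposition argument is a legitimate alternative that stays closer to the tools already set up in Section~\ref{sect::recollections}.
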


\begin{definition}
 The simplicial polyhedral fan described in Theorem~\ref{theo::silting-gvector-fan} is the \emph{$2$-term silting~$\bg$-vector fan} of~$\Lambda$, and it is denoted by~$\cF^{\bg}_{\tsilt}(\Lambda)$.
\end{definition}

\subsection{Tame algebras}
The main class of algebras that we will study is the following.
\begin{definition}
 The algebra~$\Lambda$ is \emph{tame} if for any dimension vector~$\bd$, there are~$k[t]$-$\Lambda$-bimodules~$M_1, \ldots, M_{m(\bd)}$ such that
 \begin{enumerate}
  \item for all~$i\in \{1, \ldots, m(\bd)\}$, the module~$M_i$ is free of finite rank as a~$k[t]$-module;
  \item all but finitely many indecomposable~$\Lambda$-modules of dimension vector~$\bd$ have the form
  \[
   k[t]/(t-\lambda) \otimes_{k[t]} M_i
  \]
  with~$i\in \{1, \ldots, m(\bd)\}$ and~$\lambda\in k$.
 \end{enumerate}
 For a given~$M_i$, the family of indecomposable~$\Lambda$-modules of the form~$k[t]/(t-\lambda) \otimes_{k[t]} M_i$ will be called a \emph{$1$-parameter family of indecomposable modules}.
 
\end{definition}

We will be using an important result on tame algebras, due to W.~Crawley-Boevey.

\begin{theorem}[Theorem D of~\cite{Crawley-Boevey}]
 Let~$\Lambda$ be a tame algebra.  Then for any dimension vector~$\bd$, all but finitely many isomorphism classes of~$\Lambda$-modules~$L$ of dimension vector~$\bd$ satisfy~$\tau L \cong L$.
\end{theorem}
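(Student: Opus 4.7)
The plan is to reduce to a single 1-parameter family, pass to its generic module over the extension $\Lambda_{k(t)} := k(t) \otimes_k \Lambda$, and combine Voigt's lemma with the finiteness of 1-parameter families per dimension vector to conclude. Fix a dimension vector $\bd$. By tameness, all but finitely many indecomposable $\Lambda$-modules of dimension $\bd$ are of the form $L_\lambda = k[t]/(t-\lambda) \otimes_{k[t]} M_i$ for one of the finitely many bimodules $M_i$ and some $\lambda \in k$; so it suffices to prove, for each fixed $M = M_i$, that $\tau L_\lambda \cong L_\lambda$ for all but finitely many $\lambda$.

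I would next consider the generic module $G := k(t) \otimes_{k[t]} M$, a finite-dimensional indecomposable $\Lambda_{k(t)}$-module. A minimal projective presentation of $M$ chosen as a $k[t]$-$\Lambda$-bimodule specializes at $t = \lambda$ to a minimal projective presentation of $L_\lambda$ for almost all $\lambda$, and localizes to one of $G$ over $\Lambda_{k(t)}$. Applying the Nakayama functor, $\tau G$ is a $\Lambda_{k(t)}$-module specializing to $\tau L_\lambda$ almost everywhere, so $\dim \tau L_\lambda$ is generically a fixed vector $\bd'$, and the family $\{\tau L_\lambda\}$ is contained (generically) in one of the finitely many 1-parameter families of indecomposables of dimension $\bd'$ given by tameness at $\bd'$. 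Hence there exist a bimodule $N$ in that classification and a rational map $\sigma : \bA^1 \dashrightarrow \bA^1$ with $\tau L_\lambda \cong N_{\sigma(\lambda)}$ for generic $\lambda$, where $N_\mu := k[t]/(t-\mu) \otimes_{k[t]} N$.

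Voigt's lemma then plays its key role: since the 1-parameter family $\{L_\lambda\}$ gives a 1-dimensional set of pairwise non-isomorphic orbits in $\rep_{\bd}(\Lambda)$, for generic $\lambda$ one has $\codim_{\cZ}\cO_{L_\lambda} \geq 1$ and hence
\[
\dim \Hom{\Lambda}(L_\lambda, \tau L_\lambda) \geq \dim \Ext{1}{\Lambda}(L_\lambda, L_\lambda) \geq 1,
\]
providing a non-zero morphism $L_\lambda \to N_{\sigma(\lambda)}$ for generic $\lambda$. The main obstacle is then to upgrade this information to the desired conclusion $\tau L_\lambda \cong L_\lambda$. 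My approach would be to iterate $\tau$: since the action of $\tau$ permutes the finite list of bimodules associated to any given dimension vector, some power $\tau^r$ returns the family to itself and induces a rational self-map of the parameter $\bA^1$. A density argument, using that generic $L_\lambda$ are bricks distinguished by their parameter $\lambda$ and that non-zero maps between non-isomorphic bricks in a tame 1-parameter family are severely constrained, should force this self-map to be the identity; a final comparison of endolengths, together with Crawley--Boevey's characterization of tame algebras by the finiteness of generic modules of each endolength, refines $r$ down to $1$, completing the argument.
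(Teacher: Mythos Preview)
The paper does not prove this statement; it is quoted as Theorem~D of Crawley--Boevey's paper \cite{Crawley-Boevey} and used as a black box throughout (in the proof of Theorem~\ref{theo::genericDecompositionForTame} and Corollary~\ref{coro::factor-through-SigmaLambda}). So there is no ``paper's own proof'' to compare your attempt against.

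Regarding your sketch on its own merits: the first half (passing to the generic $k(t)$-module and specializing the Nakayama functor) is reasonable and is indeed in the spirit of Crawley--Boevey's approach via generic modules. But the second half has genuine gaps. First, the claim that $\tau$ ``permutes the finite list of bimodules'' is not justified: the bimodules $M_i$ in the definition of tameness are not canonical, and there is no a~priori reason that applying $\tau$ to one family lands you inside another one from the \emph{same} chosen list, rather than merely inside \emph{some} 1-parameter family covered by a different parametrization. You would need to argue at the level of generic modules (in Crawley--Boevey's sense: indecomposable of finite endolength) rather than at the level of the chosen $M_i$. Second, the ``density argument'' that forces the rational self-map of $\bA^1$ to be the identity is asserted but not supplied; this is precisely where the real work lies, and vague appeals to bricks and constrained morphisms do not close it. Third, the descent from $\tau^r\cong\operatorname{id}$ to $\tau\cong\operatorname{id}$ via endolength is only gestured at. Crawley--Boevey's actual proof goes through his characterization of tameness by the finiteness of generic modules of each endolength and a careful analysis of how $\tau$ acts on those generic modules; your outline names the right ingredients but does not assemble them into an argument.
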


\subsection{Generic decomposition of $\bg$-vectors for tame algebras}
In this section, we state a theorem due to C.~Geiss, D.~Labardini-Fragoso and J.~Schr\"oer\cite{GeissLabardiniFragosoSchroer2}.  We first recall some definitions.
\begin{definition}
 An object~$X$ of~$K^b(\proj\Lambda)$ is \emph{presilting} if~$\Hom{K^b(\proj\Lambda)}(X, X[i])$ vanishes for all~$i>0$.  It is called \emph{silting} if, moreover, it generates~$K^b(\proj\Lambda)$ as a triangulated category.
\end{definition}

\begin{definition}
 A~$\Lambda$-module is a \emph{brick} if its endomorphism algebra is isomorphic to the base field~$k$.
\end{definition}

\begin{theorem}[\cite{GeissLabardiniFragosoSchroer2}]\label{theo::genericDecompositionForTame}
 Let~$\Lambda$ be a tame algebra, and let~$\bg\in K_0(\proj\Lambda)$ be a~$\bg$-vector.  Then the generic decomposition of~$\bg$ has the form
 \[
  \bg = \bg_1 + \ldots + \bg_r + \bh_1 + \ldots + \bh_s,
 \]
 where $r,s\geq 0$ and
 \begin{enumerate}
  \item\label{item::g} for~$i\in \{1, \ldots, r\}$, the vector~$\bg_i$ is generically indecomposable and such that for a general~$f$ in~$\Hom{\Lambda}(P^{(\bg_i)_-}, P^{(\bg_i)_+})$, the object~$f$ of~$K^{[-1,0]}(\proj\Lambda)$ is presilting;
  \item\label{item::h} for~$j\in \{1, \ldots, s\}$, the vector~$\bh_j$ is generically indecomposable and there is a dense open subset~$\cU$ of~$\Hom{\Lambda}(P^{(\bh_j)_-}, P^{(\bh_j)_+})$ such that the cokernels of morphisms in~$\cU$ form a~$1$-parameter family of indecomposable~$\Lambda$-modules which are bricks and are isomorphic to their own Auslander-Reiten translate.   
  Moreover,~$e(\bh_j, \bh_j) =0$.
 \end{enumerate}
\end{theorem}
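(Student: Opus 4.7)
The plan is to refine the generic decomposition from Theorem~\ref{theo::generic2termcomplexes}, sorting each generically indecomposable summand into one of the two types~(i) or~(ii) in the statement. Starting from a generic decomposition $\bg=\bg_1'+\cdots+\bg_t'$, I would treat each summand $\bg_i'$ separately. If $(\bg_i')_+=0$, then $\bg_i'=-[P_j]$ for some $j$ and the associated object $\Sigma P_j$ is presilting, so we are in case~(i). Otherwise, Theorem~\ref{theo::genericCokernels} attaches to $\bg_i'$ a generically $\tau$-reduced irreducible component $\cZ\subseteq \rep_{\bd_0}(\Lambda)$ whose dense subset is swept by cokernels of general morphisms; since $\bg_i'$ is generically indecomposable, a general $M\in\cZ$ is itself indecomposable. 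The dichotomy between (i) and (ii) is then driven by whether such a general $M$ is $\tau$-rigid: if $\dim_k\Hom{\Lambda}(M,\tau M)=0$ generically, generic $\tau$-reducedness forces $\cZ=\overline{\cO_M}$ with $M$ $\tau$-rigid, placing $\bg_i'$ in case~(i) via the correspondence of~\cite{AdachiIyamaReiten}.

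In the complementary case, I will show the summand lands in case~(ii); it will then become one of the $\bh_j$ of the statement. The key input is Crawley-Boevey's theorem, which guarantees that cofinitely many isomorphism classes of indecomposables of dimension vector $\bd_0$ satisfy $\tau M\cong M$. Since the indecomposables form a $\GL_{\bd_0}$-stable open subset of $\cZ$ and the exceptional indecomposables fill at most a finite union of orbits, a dimension argument shows that if the exceptional locus were dense in $\cZ$, then $\cZ$ would be a single orbit closure, forcing a generic $M$ to be $\tau$-rigid---contrary to our hypothesis. Therefore a general $M\in\cZ$ satisfies $\tau M\cong M$, and applying the tame hypothesis once more, $M$ lies in a $1$-parameter family $\{k[t]/(t-\lambda)\otimes_{k[t]}N\}_{\lambda\in k}$.

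The brick condition should then fall out of a dimension count. Fixing an isomorphism $M\cong\tau M$ yields $\dim_k\Hom{\Lambda}(M,\tau M)=\dim_k\End{\Lambda}(M)$; combined with generic $\tau$-reducedness, this gives
\[
 \dim\cZ=\dim\cO_M+\dim_k\Hom{\Lambda}(M,\tau M)=\dim\GL_{\bd_0}.
\]
On the other hand, the closure of the $\GL_{\bd_0}$-saturation of the $1$-parameter family through $M$ has dimension $\dim\GL_{\bd_0}-\dim_k\End{\Lambda}(M)+1$ and must coincide with $\cZ$. Comparing the two expressions forces $\dim_k\End{\Lambda}(M)=1$, so a general $M$ is a brick, and the cokernels of morphisms in a dense open subset $\cU\subseteq\Hom{\Lambda}(P^{(\bh_j)_-},P^{(\bh_j)_+})$ trace out precisely such a $1$-parameter family of $\tau$-fixed bricks.

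Finally, I would establish $e(\bh_j,\bh_j)=0$ by testing the minimum at two distinct parameters $\lambda\neq\mu$ in the family. Non-isomorphic bricks have no homomorphisms, so $\Hom{\Lambda}(M_\lambda,M_\mu)=0$, and the usual homological obstructions between non-isomorphic members of a $1$-parameter family of $\tau$-fixed bricks in a tame algebra also vanish; this should translate into $\Hom{K^b(\proj\Lambda)}(P_\lambda,\Sigma P_\mu)=0$ for generic $\lambda\neq\mu$, realising the minimum~$0$ that defines $e(\bh_j,\bh_j)$. The most delicate step I foresee is precisely this last vanishing: $\Hom{K^b(\proj\Lambda)}(P,\Sigma P')$ is \emph{not} equal to $\Ext{1}{\Lambda}(M,M')$ in general, and controlling the extra contributions coming from $H^{-1}(P)$ (equivalently, from non-minimal projective summands) requires genuinely leveraging the structure of $1$-parameter families of bricks rather than merely the definition of tameness.
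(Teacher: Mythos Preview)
Your overall strategy coincides with the paper's: reduce to a single generically indecomposable summand, pass to its generically $\tau$-reduced component $\cZ$, and split into the $\tau$-rigid case~(i) and the brick-$\tau$-periodic case~(ii) using Crawley-Boevey's theorem. Your dimension count for the brick property unpacks to the paper's more direct observation that the generic orbit in $\cZ$ has codimension $0$ or $1$ (since one of the maps $\GL_\bd\times\bA^1\to\cZ$ is dominant), so that in case~(ii) generic $\tau$-reducedness gives $\dim\Hom{\Lambda}(M,\tau M)=1$ immediately, whence $\dim\End{\Lambda}(M)=1$ once $\tau M\cong M$.

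The genuine gap is in your argument for $e(\bh_j,\bh_j)=0$, and you are right to flag it. Two points.

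First, the passage from $K^b(\proj\Lambda)$ to modules is handled cleanly by the formula
\[
\Hom{K^b(\proj\Lambda)}(f_1,\Sigma f_2)\cong\Hom{\Lambda}(\Coker f_2,\tau\Coker f_1)
\]
of \cite[Lemma~2.6]{plamondon2013}. Combined with $\tau\Coker f_1\cong\Coker f_1$ (Crawley-Boevey again), this reduces $e(\bh_j,\bh_j)$ to $\dim\Hom{\Lambda}(L_2,L_1)$ for a generic pair of non-isomorphic points $L_1,L_2\in\cZ$; there are no extra ``$H^{-1}$'' contributions to control.

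Second, your assertion that ``non-isomorphic bricks have no homomorphisms'' is false in general (already for the $A_2$ quiver), so the vanishing of $\Hom{\Lambda}(L_2,L_1)$ needs a real argument. The paper uses one attributed to Skowro\'nski and Zwara: the function $(X,Y)\mapsto\dim\rad_\Lambda(X,Y)$ is upper semicontinuous; on the diagonal it vanishes generically because a generic $L\in\cZ$ is a brick and hence $\rad\End{\Lambda}(L)=0$; by semicontinuity it then vanishes for a generic pair $(L_1,L_2)$. Since $L_1\not\cong L_2$ are indecomposable, $\Hom{\Lambda}(L_2,L_1)=\rad_\Lambda(L_2,L_1)=0$.
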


Theorem~\ref{theo::genericDecompositionForTame} was announced and its proof outlined in a lecture by J.~Schr\"oer at MFO in January 2020.  It is contained in~\cite{GeissLabardiniFragosoSchroer2}, where it is formulated in terms of generically~$\tau$-reduced components.  The above formultation in terms of generic decomposition of~$\bg$-vectors can be obtained by combining Theorem~3.2 and Section~4.2 of~\cite{GeissLabardiniFragosoSchroer2} together with~\cite[Theorem 4.4]{DerksenFei} and~\cite[Theorem 1.2]{plamondon2013}.
We include a complete proof here for the convenience of the reader.
\begin{proof}
 Assume first that~$\bg$ is generically indecomposable.  Let~$\cZ$ be the irreducible component of~$\rep_\bd(\Lambda)$ obtained by taking the closure of the union of the orbits of cokernels of generic elements in~$\Hom{\Lambda}(P^{\bg_-}, P^{\bg_+})$.  By Theorem~\ref{theo::genericCokernels}, the component~$\cZ$ is generically~$\tau$-reduced.
 
 Since~$\Lambda$ is tame, there exist~$k[t]$-$\Lambda$-bimodules~$M_1, \ldots, M_{m(\bd)}$, free of rank~$1$ as~$k[t]$-modules, such that almost all indecomposable~$\Lambda$-modules of dimension vector~$\bd$ are isomorphic to
 \[
  k[t]/(t-\lambda) \otimes_{k[t]} M_i
 \]
 for some~$\lambda\in k$ and some~$i\in\{1, \ldots, m(\bd)\}$.  Each~$M_i$ induces a morphism of varieties
 \[
  \phi_i : \bA^1 \to \rep_{\bd}(\Lambda)
 \]
 sending~$\lambda\in\bA^1$ to a point corresponding to the module~$k[t]/(t-\lambda) \otimes_{k[t]} M_i$.  Since~$\bA^1$ is irreducible, the image of each~$\phi_i$ either is contained in~$\cZ$ or has empty intersection with~$\cZ$.  Assume that the indices for which the image of~$\phi_i$ is contained in~$\cZ$ are~$1, \ldots, m'$.
 
 For each~$i\in\{1, \ldots, m'\}$, consider the morphism
 \begin{eqnarray*}
  \psi_i:\GL_\bd \times \bA^1 & \longrightarrow & \cZ \\
  (g,\lambda) & \longmapsto & g \cdot \phi_i(\lambda).
 \end{eqnarray*}
 By our assumptions on~$\cZ$, the union of the images of the~$\psi_i$ is dense in~$\cZ$.  Therefore, there is an~$\ell\in\{1, \ldots, m'\}$ such that the image of~$\psi_\ell$ is dense in~$\cZ$.

 Let~$L$ be a generic point in the image of~$\psi_\ell$; we shall make a small abuse of notation and denote by~$L$ the corresponding~$\Lambda$-module.  Assume that~$L = \psi_\ell(g,\lambda)$.  Then~$\psi_\ell(\GL_\bd \times \{\lambda\})$ is the orbit~$\cO_L$ of~$L$ in~$\cZ$.  Since~$\psi_\ell(\GL_\bd \times \bA^1)$ is dense in~$\cZ$, it is of codimension~$0$.  Thus,~$\cO_L$ is of codimension~$0$ or~$1$ in~$\cZ$.
 
 \bigskip
 
 {\it Case 1:~$\codim_\cZ \cO_L = 0$.}  Since~$\cZ$ is generically~$\tau$-reduced, we have the equalities~$\dim\Hom{\Lambda}(L,\tau L) = \codim_\cZ \cO_L = 0$.  Therefore,~$L$ is~$\tau$-rigid.  Since the projective presentations of~$\tau$-rigid modules are~$2$-term presilting objects, this shows that generic elements in the space~$\Hom{K^b(\proj\Lambda)}(P^{\bg_-}, P^{\bg_+})$ is presilting.  Thus~$\bg$ satisfies condition~(\ref{item::g}) of the Theorem.
 
 \bigskip
 
 {\it Case 2:~$\codim_\cZ\cO_L = 1$.}  In this case, since~$\cZ$ is generically~$\tau$-reduced, we get that~$\dim\Hom{\Lambda}(L,\tau L) = 1$.  In particular,~$\cZ$ contains infinitely many orbits.  By~\cite[Theorem D]{Crawley-Boevey}, since~$L$ is generic, we get that~$\tau L$ is isomorphic to~$L$.  Therefore~$\dim\Hom{\Lambda}(L,L) = 1$, and~$L$ is a brick.  Thus to show that~$\bg$ satisfies condition~(\ref{item::h}) of the Theorem, all that remains is to see that~$e(\bg,\bg)=0$.
 
 Let~$(f_1, f_2)$ be a generic element of~$\Hom{\Lambda}(P^{\bg_-},P^{\bg_+})\times \Hom{\Lambda}(P^{\bg_-},P^{\bg_+})$.  Then
 \begin{eqnarray*}
  e(\bg,\bg) &=& \dim \Hom{K^b(\proj\Lambda)}(f_1, \Sigma f_2) \\
             &=& \dim \Hom{\Lambda}\big(\Coker(f_2), \tau \Coker(f_1)\big) \\
             &=& \dim \Hom{\Lambda}\big(\Coker(f_2), \Coker(f_1)\big),
 \end{eqnarray*}
 where the second equality is by~\cite[Lemma 2.6]{plamondon2013} and the third one by~\cite[Theorem D]{Crawley-Boevey}.  Letting~$L_i = \Coker(f_i)$ for~$i\in\{1,2\}$, it suffices to show that~$\Hom{\Lambda}(L_2, L_1) = 0$.
 
 The following argument was suggested to us by A.~Skowro\'nski and G.~Zwara.  Since~$L_1$ and~$L_2$ are bricks (as proved above), we have that
 \[
  \rad \End{\Lambda}(L_1) = 0 = \rad \End{\Lambda}(L_2).
 \]
 Since the function~$(X,Y) \mapsto \dim \rad_\Lambda(X,Y)$ is upper semicontinuous on~$\cZ$, this implies that for generic~$L_1$ and~$L_2$, we have that~$\rad_\Lambda (L_2, L_1) = 0$.  Since~$L_1$ and~$L_2$ are indecomposable and non isomorphic, this in turn implies that~$\Hom{\Lambda}(L_2, L_1) = 0$.
 
 This proves that~$e(\bg, \bg) = 0$.
 
 \bigskip
 
 The theorem is now proved for a generically indecomposable~$\bg$.  If~$\bg$ is generically decomposable, simply apply the above to each term of its generic decomposition, and the theorem is proved.
\end{proof}

We will need a technical corollary of Theorem~\ref{theo::genericDecompositionForTame}.

\begin{corollary}\label{coro::factor-through-SigmaLambda}
 Let~$\Lambda$ be any algebra satisfying conditions~(\ref{item::g}) and~(\ref{item::h}) of Theorem~\ref{theo::genericDecompositionForTame}.  Then
 \begin{enumerate}
    \item for~$j\in\{1, \ldots, s\}$ and a general~\[h\in\Hom{\Lambda}(P^{(\bh_j)_-}, P^{(\bh_j)_+}),\] the space~$\Hom{K^b(\proj \Lambda)}(h,\Sigma h)$ is one-dimensional;
    \item for~$i\in\{1, \ldots, r\}$, $j\in\{1, \ldots, s\}$ and a general~
          \[
           (g,h) \in \Hom{\Lambda}(P^{(\bg_i)_-}, P^{(\bg_i)_+}) \times \Hom{\Lambda}(P^{(\bh_j)_-}, P^{(\bh_j)_+}),
          \]
          all morphisms in~$\Hom{K^b(\proj\Lambda)}(g,h)$ factor through an object of~$\add(\Sigma\Lambda)$;
     
    \item for~$j,\ell\in\{1, \ldots, s\}$, and a general
          \[
           (h,h') \in \Hom{\Lambda}(P^{(\bh_j)_-}, P^{(\bh_j)_+}) \times \Hom{\Lambda}(P^{(\bh_{\ell})_-}, P^{(\bh_{\ell})_+}),
          \]
     all morphisms in~$\Hom{K^b(\proj \Lambda)}(h,h')$ factor through an object of~$\add(\Sigma\Lambda)$. 
   \end{enumerate}
\end{corollary}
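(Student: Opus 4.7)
The strategy is to identify, for a pair of two-term complexes $g,h$ in $K^{[-1,0]}(\proj\Lambda)$, morphisms in $\Hom{K^b(\proj\Lambda)}(g,h)$ that factor through $\add(\Sigma\Lambda)$ with those that induce the zero map on cokernels, and then to use the property $\tau L \cong L$ of the band cokernels guaranteed by~(\ref{item::h}) to convert the generic Ext-orthogonality into Hom-vanishings at the module level. The core reduction is an isomorphism
\[
\Hom{K^b(\proj\Lambda)}(g,h)\big/ J(g,h) \iso \Hom{\Lambda}(\Coker g, \Coker h),
\]
induced by the cokernel functor, where $J(g,h)$ denotes the subspace of morphisms factoring through $\add(\Sigma\Lambda)$. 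I would prove it by a direct chain-level calculation: surjectivity amounts to lifting any module map $\psi:\Coker g \to \Coker h$ to a chain map using projectivity of $P^{-1}_g$ and $P^0_g$, while for the identification of the kernel one observes that if $H^0(\phi)=0$ then $\phi^0 = hs$ for some $s:P^0_g\to P^{-1}_h$, and after subtracting the homotopy determined by $s$ the resulting representative has vanishing degree-zero part and therefore visibly factors through $\Sigma P^{-1}_g \in \add(\Sigma\Lambda)$.

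With this reduction in hand, items~(2) and~(3) become vanishing statements for the right-hand $\Hom{\Lambda}$ for generic arguments, and I would apply the formula of \cite[Lemma 2.6]{plamondon2013} already used in the proof of Theorem~\ref{theo::genericDecompositionForTame}, namely
\[
\dim \Hom{K^b(\proj\Lambda)}(f_1,\Sigma f_2) = \dim \Hom{\Lambda}(\Coker f_2, \tau\Coker f_1).
\]
For~(2), applying this with $(f_1,f_2)=(h,g)$, the orthogonality $e(\bh_j,\bg_i)=0$ coming from Theorem~\ref{theo::generic2termcomplexes}(2) yields $\Hom{\Lambda}(\Coker g,\tau\Coker h)=0$ for generic pairs, and combining this with $\tau\Coker h\cong\Coker h$ from~(\ref{item::h}) gives $\Hom{\Lambda}(\Coker g,\Coker h)=0$. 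For~(3), the same argument using $e(\bh_\ell,\bh_j)=0$---supplied by Theorem~\ref{theo::generic2termcomplexes}(2) when $j\neq\ell$ and by the assumption $e(\bh_j,\bh_j)=0$ in~(\ref{item::h}) when $j=\ell$---produces $\Hom{\Lambda}(\Coker h,\Coker h')=0$.

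Item~(1) then follows from the same formula applied to $h$ itself:
\[
\dim \Hom{K^b(\proj\Lambda)}(h,\Sigma h) = \dim \Hom{\Lambda}(\Coker h,\tau\Coker h) = \dim \End{\Lambda}(\Coker h) = 1,
\]
where the last step uses that $L = \Coker h$ is a brick with $\tau L \cong L$ by~(\ref{item::h}). The conceptual heart of the argument is the replacement of Ext-orthogonality by Hom-orthogonality made possible by the identity $\tau L\cong L$; the main technical obstacle is the chain-map bookkeeping underlying the reduction lemma of the first paragraph, which is elementary but must be carried out carefully, after which the rest of the proof is a short invocation of \cite[Lemma 2.6]{plamondon2013} together with the hypotheses of Theorem~\ref{theo::genericDecompositionForTame}.
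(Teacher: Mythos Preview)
Your proposal is correct and follows essentially the same route as the paper: both arguments rest on the identification $\Hom{K^b(\proj\Lambda)}(g,h)/(\Sigma\Lambda)\cong\Hom{\Lambda}(\Ho^0 g,\Ho^0 h)$, then invoke \cite[Lemma~2.6]{plamondon2013} together with $\tau\,\Ho^0 h\cong\Ho^0 h$ and the brick property from condition~(\ref{item::h}) to turn the generic Ext-orthogonality $e(-,-)=0$ into the required Hom-vanishings. The only difference is cosmetic: you spell out the chain-level verification of the reduction isomorphism, whereas the paper treats it as known.
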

\begin{proof}
 To prove (1), we note that
 \begin{eqnarray*}
  \Hom{K^b(\proj \Lambda)}(h,\Sigma h) & \cong & \Hom{\Lambda}(\Ho^0h, \tau \Ho^0h) \quad\textrm{(by \cite[Lemma 2.6]{plamondon2013})} \\
  & \cong & \Hom{\Lambda}(\Ho^0h, \Ho^0 h) \quad \textrm{(since~$\Ho^0h \cong \tau\Ho^0 h$ by \cite{Crawley-Boevey})} \\
  & \cong & k \quad \textrm{(since~$\Ho^0h$ is a brick).}
 \end{eqnarray*}
 The proof of (2) is similar:
 \begin{eqnarray*}
  \Hom{K^b(\proj\Lambda)}(g,h)/(\Sigma \Lambda) & \cong & \Hom{\Lambda}(\Ho^0g, \Ho^0 h) \\
  & \cong & \Hom{\Lambda}(\Ho^0g, \tau\Ho^0 h) \quad \textrm{(since~$\Ho^0h \cong \tau\Ho^0 h$ by \cite{Crawley-Boevey})} \\
  & \cong & \Hom{K^b(\proj\Lambda)}(h, \Sigma g) \quad\textrm{(by \cite[Lemma 2.6]{plamondon2013})} \\
  & \cong & 0,
 \end{eqnarray*}
 where~$(\Sigma\Lambda)$ is the subspace of morphisms factoring through~$\add(\Sigma \Lambda)$.
 Finally, (3) is proved as follows:
 \begin{eqnarray*}
  \Hom{K^b(\proj\Lambda)}(h,h')/(\Sigma \Lambda) & \cong & \Hom{\Lambda}(\Ho^0h, \Ho^0 h') \\
  & \cong & \Hom{\Lambda}(\Ho^0h, \tau\Ho^0 h') \quad \textrm{(since~$\Ho^0h' \cong \tau\Ho^0 h'$ by \cite{Crawley-Boevey})} \\
  & \cong & \Hom{K^b(\proj\Lambda)}(h', \Sigma h) \quad\textrm{(by \cite[Lemma 2.6]{plamondon2013})} \\
  & \cong & 0.
 \end{eqnarray*}
\end{proof}

\subsection{$\bg$-tame algebras}

The property that we wish to study is encapsulated in the following definition.  Let $\Lambda$ be a finite-dimensional algebra over a field $k$.
We denote by $\bar \cF^{\bg}_{\tsilt}(\Lambda)$ the closure of the union of the cones of the $2$-term silting $\bg$-vector fan $\cF^{\bg}_{\tsilt}(\Lambda)$.

\begin{definition}[\cite{AokiYurikusa}]
The algebra $\Lambda$ is \emph{$\bg$-tame} if $\bar\cF^{\bg}_{\tsilt}(\Lambda)=\bR^n$.
\end{definition}

In Section~\ref{sect::densitiForMutationFinite}, we will need the following property of~$\bg$-tame algebras.

\begin{proposition}\label{prop::reduction by idemp}
Let $e$ be an idempotent of $\Lambda$.
If $\Lambda$ is $\bg$-tame, then so is $\Lambda/(e)$.
\end{proposition}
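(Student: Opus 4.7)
The plan is to transport $2$-term presilting complexes from $\Lambda$ to $\Lambda/(e)$ by means of the right-exact additive functor
\[
 F := -\otimes_\Lambda \Lambda/(e): K^b(\proj\Lambda) \to K^b(\proj \Lambda/(e)).
\]
Up to conjugating $e$ (harmless since $\Lambda$ is basic and the two-sided ideal $(e)$ depends only on the conjugacy class of $e$), we may assume $e = \sum_{i\in I} e_i$ for some $I\subseteq\{1, \ldots, n\}$; then $F$ sends $P_i$ to $0$ for $i\in I$ and to the indecomposable $\Lambda/(e)$-projective $P_i^{\Lambda/(e)}$ otherwise, so $F$ restricts to a functor $K^{[-1,0]}(\proj\Lambda) \to K^{[-1,0]}(\proj\Lambda/(e))$. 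At the level of Grothendieck groups, $F$ realises the natural projection
$\pi: K_0(\proj\Lambda) = \bZ^n \twoheadrightarrow \bZ^{n-|I|} = K_0(\proj\Lambda/(e))$
killing the $I$-coordinates, so $[F(T)] = \pi([T])$ for every $T \in K^{[-1,0]}(\proj\Lambda)$.

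The key technical point, and what I expect to be the main obstacle, is to show that $F$ preserves $2$-term presilting: if $T = (T^{-1} \xrightarrow{f} T^0)$ satisfies $\Hom{K^b(\proj\Lambda)}(T, \Sigma T) = 0$, then so does $F(T)$. I would prove this by a lifting argument. A class in $\Hom{K^b(\proj\Lambda/(e))}(F(T), \Sigma F(T))$ is represented by a $\Lambda/(e)$-module morphism $\phi: F(T^{-1}) \to F(T^0)$; viewing it as a $\Lambda$-module morphism $T^{-1} \to F(T^0)$ via $T^{-1} \twoheadrightarrow F(T^{-1})$, the $\Lambda$-projectivity of $T^{-1}$ lets me lift it through $T^0 \twoheadrightarrow F(T^0)$ to $\tilde\phi: T^{-1} \to T^0$ with $F(\tilde\phi) = \phi$. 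The map $\tilde\phi$ represents a chain map $T \to \Sigma T$ over $\Lambda$, hence is null-homotopic by the presilting hypothesis; applying the additive functor $F$ to the null-homotopy then exhibits $\phi$ itself as null-homotopic over $\Lambda/(e)$. The preservation is not automatic for triangle functors (which need be neither fully faithful nor conservative), and the key inputs are precisely the $\Lambda$-projectivity of both $T^{-1}$ and $T^0$.

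Granted the preservation lemma, the proposition follows by a density argument. Let $\bg \in \bR^{n-|I|}$ and embed it in $\bR^n$ by padding with zeros on the $I$-coordinates. Since $\Lambda$ is $\bg$-tame, $\bar\cF^{\bg}_{\tsilt}(\Lambda) = \bR^n$, so there is a sequence $T_k$ of $2$-term presilting complexes of $\Lambda$ whose $\bg$-vectors tend to this embedded vector. Each $F(T_k)$ is then a $2$-term presilting complex of $\Lambda/(e)$ with $\bg$-vector $\pi([T_k])$, and these converge to $\bg$ in $\bR^{n-|I|}$. Hence $\bg \in \bar\cF^{\bg}_{\tsilt}(\Lambda/(e))$, and as $\bg$ was arbitrary, $\Lambda/(e)$ is $\bg$-tame.
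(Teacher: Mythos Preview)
Your proof is correct, but it follows a different route from the paper's. The paper argues by contrapositive, invoking \cite[Theorem~3.17]{Asai}: if $\Lambda/(e)$ is not $\bg$-tame, then there is a full-dimensional cone $C\subset K_0(\proj\Lambda/(e))_{\bR}$ such that every rational $\theta\in C$ admits a non-zero $\theta$-semistable $\Lambda/(e)$-module; viewing these as $\Lambda$-modules shows they are $\theta'$-semistable for every $\theta'$ in the preimage cone $\phi^{-1}(C)$, so by Asai's theorem again $\Lambda$ is not $\bg$-tame. Your argument instead works directly on the fans, showing that the reduction functor $F=-\otimes_\Lambda\Lambda/(e)$ sends $2$-term presilting complexes to $2$-term presilting complexes and hence maps cones of $\cF^{\bg}_{\tsilt}(\Lambda)$ into cones of $\cF^{\bg}_{\tsilt}(\Lambda/(e))$ via the projection $\pi$. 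Your approach is more elementary in that it avoids Asai's wall-and-chamber description entirely; the paper's approach is shorter once that machinery is in place. The preservation lemma you prove by hand is indeed known (it underlies the reduction maps on support $\tau$-tilting pairs studied for instance by Eisele--Janssens--Raedschelders and Demonet--Iyama--Jasso), and your homotopy-lifting argument for it is clean.

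One small wording issue: in the density step you say there is a sequence of presilting complexes $T_k$ whose $\bg$-vectors tend to the embedded vector, but $\bg$-vectors lie in $\bZ^n$ and cannot converge to a non-integer point. What you actually need (and what your argument implicitly uses) is a sequence of points $p_k$ in cones of $\cF^{\bg}_{\tsilt}(\Lambda)$ converging to the embedded $\bg$; each $p_k$ lies in the cone of some presilting $T_k$, so $\pi(p_k)$ lies in the cone of $F(T_k)$ in $\cF^{\bg}_{\tsilt}(\Lambda/(e))$, and $\pi(p_k)\to\bg$. This is what your last paragraph is really doing, and with this phrasing the argument is complete.
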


\begin{proof}
Let $|e|=n-m$.
Suppose that $\Lambda/(e)$ is not $\bg$-tame.
Then by \cite[Theorem 3.17]{Asai}, there is some $m$-dimensional cone $C$ in $K_0(\proj\Lambda/(e))\simeq\bR^m$ such that there is a non-zero $\theta$-semistable ($\Lambda/(e)$)-module $M(\theta)$ for any $\theta\in C\cap\bQ^m$. 
Let $\phi : K_0(\proj\Lambda)\simeq\bR^n\rightarrow\bR^m$ be a natural coordinate projection. 
Then $M(\theta)_{\Lambda}$ is a $\phi^{-1}(\theta)$-semistable $\Lambda$-module.
Thus there is a non-zero $\theta'$-semistable $\Lambda$-module for any $\theta'\in\phi^{-1}(C)\cap\bQ^n$, where $\phi^{-1}(C)$ is an $n$-dimensional cone in $K_0(\proj\Lambda)$.
By \cite[Theorem 3.17]{Asai} again, $\Lambda$ is not $\bg$-tame.
\end{proof}

\section{Density of the~$2$-term silting~$\bg$-vector fan}
\label{sect::densityOfFan}
Our main theorem is the following.

\begin{theorem}\label{theo::density}
 Let~$\Lambda$ be a tame basic finite-dimensional~$k$-algebra, and let~$n$ be the rank of its Grothendieck group.  Then any vector of~$\bQ^n$ is in the closure~$\bar \cF^{\bg}_{\tsilt}$ of the union of the cones of the~$2$-term silting~$\bg$-vector fan~$\cF^{\bg}_{\tsilt}$.  In particular,~$\Lambda$ is~$\bg$-tame.
\end{theorem}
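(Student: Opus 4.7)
The plan is to leverage the structure of the generic decomposition provided by Theorem~\ref{theo::genericDecompositionForTame}, approximating the ``band'' summands of any $\bg$-vector by rays of the $2$-term silting fan via iteration of the operator $\cyl_X$ alluded to in the introduction.

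First I would pick $\bg \in \bQ^n$; by clearing denominators it suffices to work with $\bg \in \bZ^n$. Applying Theorem~\ref{theo::genericDecompositionForTame} yields
\[
\bg = \bg_1 + \ldots + \bg_r + \bh_1 + \ldots + \bh_s,
\]
where the $\bg_i$ are presilting generically indecomposable summands and the $\bh_j$ are generically indecomposable ``band'' summands coming from one-parameter families of bricks $L$ with $L \cong \tau L$, with $e$ vanishing between every pair of distinct summands. Since $e(\bg_i,\bg_{i'}) = 0$ for $i \ne i'$ and each $\bg_i$ is presilting, the direct sum of generic representatives of the $\bg_i$ is itself presilting, so the cone $C(\bg_1,\ldots,\bg_r)$ already lies in $\cF^{\bg}_{\tsilt}$. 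The remaining task is to show that the full cone $C(\bg_1,\ldots,\bg_r,\bh_1,\ldots,\bh_s)$, which contains $\bg$, lies in $\bar\cF^{\bg}_{\tsilt}$.

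Next I would introduce the cylinder operator $\cyl_X$. For each band representative $X_j \in K^{[-1,0]}(\proj\Lambda)$ with $[X_j]=\bh_j$ and a $2$-term complex $Y$, define $\cyl_{X_j}(Y)$ via a triangle built from a universal morphism $X_j^{\oplus d_j}\to \Sigma Y$ (a truncated twist of the kind studied in the appendix). The effect on $\bg$-vectors is $[\cyl_{X_j}(Y)] = [Y] + d_j \bh_j$ with $d_j>0$. The vanishing statements of Corollary~\ref{coro::factor-through-SigmaLambda} --- namely that $\Hom{K^b(\proj\Lambda)}(X_j,\Sigma X_j)$ is one-dimensional, and that morphisms between distinct band summands, and between presilting and band summands, factor through $\add(\Sigma\Lambda)$ --- provide exactly the input needed to control the construction. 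Starting from $Y_0 := g_1 \oplus \cdots \oplus g_r$ and iterating, I would obtain presilting objects $Y_{N_1,\ldots,N_s}$ satisfying
\[
[Y_{N_1,\ldots,N_s}] = \sum_i \bg_i + \sum_j N_j d_j \bh_j.
\]
These $\bg$-vectors all lie in $\cF^{\bg}_{\tsilt}$, and taking closures under positive rescalings fills out the entire cone $C(\bg_1,\ldots,\bg_r,\bh_1,\ldots,\bh_s)$, putting $\bg$ in $\bar\cF^{\bg}_{\tsilt}$.

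The main obstacle will be verifying that iterated cylinders stay presilting, since $\cyl_X$ is \emph{not} a functor and so no formal argument by naturality is available. Concretely, one must check $\Hom{K^b(\proj\Lambda)}(\cyl_X(Y),\Sigma\cyl_X(Y)) = 0$ by applying the defining triangle twice and tracking the resulting long exact sequences, reducing the problem to a list of vanishings among Hom and Ext groups between $X$, $Y$ and their shifts. The factorization statements of Corollary~\ref{coro::factor-through-SigmaLambda}, combined with the assertion in Theorem~\ref{theo::genericDecompositionForTame}(2) that $e(\bh_j,\bh_j)=0$, should be calibrated exactly to make each required vanishing hold, both at the first iteration and inductively afterwards. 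A secondary subtlety is to ensure that $\cyl_X(Y)$ remains an object of $K^{[-1,0]}(\proj\Lambda)$ rather than escaping to a longer complex, which relies on the brick-and-band structure captured by Theorem~\ref{theo::genericDecompositionForTame}. Once these two points are established, induction on $N_1,\ldots,N_s$ closes the argument.
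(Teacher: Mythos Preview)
Your overall architecture matches the paper's: reduce to integer $\bg$, apply the generic decomposition, and use iterated cylinders with respect to the band summands $H_j$ to produce presilting objects whose $\bg$-vectors approach $\bg$. However, there is a genuine gap in your choice of starting object.

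You propose to iterate the cylinders on $Y_0 = g_1 \oplus \cdots \oplus g_r$ and assert that $d_j = \dim\Hom{K^b(\proj\Lambda)}(Y_0,\Sigma H_j) > 0$. But this is false: by the very definition of the generic decomposition (Theorem~\ref{theo::generic2termcomplexes}), we have $e(\bg_i,\bh_j)=0$ for all $i,j$, so for generic representatives $\Hom{K^b(\proj\Lambda)}(g_i,\Sigma H_j)=0$, and hence $d_j=0$. The operator $\cyl_{\Sigma H_j}$ then acts as the identity on $Y_0$, and your sequence never leaves the cone $C(\bg_1,\ldots,\bg_r)$. The paper resolves this by \emph{not} cylindering $G = g_1\oplus\cdots\oplus g_r$ at all, but rather its Bongartz co-completion $G'$, defined by the triangle $\Lambda \to G'' \to G' \to \Sigma\Lambda$ with the first map a left $\add(G)$-approximation. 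Lemma~\ref{lemm::G'andHi} then proves that $\Hom{K^b(\proj\Lambda)}(G',\Sigma H_i)\neq 0$, so the cylinders genuinely move $G'$. The resulting presilting objects have $\bg$-vector $md\bg + [G']$, and the approach to $\bg$ is obtained by letting $m\to\infty$ and rescaling, rather than by filling out a cone.

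A second omission is condition~(\ref{item-injective}) of Lemma~\ref{lemm::rigid-cylinders}: rigidity of $\cyl_{\Sigma H}U$ requires not only the vanishing conditions you list but also that for every nonzero $g\in\Hom{\cD\Lambda}(\Sigma H,\nu H)$ the induced map $\Hom{}(U,\Sigma H)\xrightarrow{g_*}\Hom{}(U,\nu H)$ is injective. This Nakayama-type condition is what makes the inductive step go through, and its verification for $U=G'$ (again in Lemma~\ref{lemm::G'andHi}) uses the specific shape of the Bongartz co-completion together with the fact that $H^0 H_i \cong \tau H^0 H_i$ is a brick. Your proposal does not address this, and it cannot be bypassed.
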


This section is devoted to the proof of Theorem~\ref{theo::density}.

\subsection{Cylinders}

The proof of Theorem~\ref{theo::density} involves a variation on the notion of spherical twist.

\begin{definition}
Let~$\cT$ be an essentially small,~$\Hom{}$-finite, Krull-Schmidt, triangulated category with suspension functor~$\Sigma$.  Let~$X$ be an object of~$\cT$.  For any object~$U$ of~$\cT$, choose a basis~$(f_1, \ldots, f_d)$ of the space~$\Hom{\cT}(U,X)$ and a triangle
\[
 \Sigma^{-1} X^d \xrightarrow{} \cyl_{X}U \xrightarrow{} U \xrightarrow{f} X^d,
\]
where~$f = \begin{bmatrix} f_1 \\ f_2 \\ \vdots \\ f_d \end{bmatrix}$.                                 The object~$\cyl_X U$ is the \emph{cylinder of~$U$ with respect to~$X$}.
\end{definition}

\begin{remark}
\begin{enumerate}
\item The object~$\cyl_X U$ is only defined up to (non-unique) isomorphism.  Moreover, we do not define an action of~$\cyl_X$ on morphisms, so it is not a functor.
  \item The morphism~$U\xrightarrow{f} X^d$ is functorial in~$U$.  Indeed, the functor
\[
\Hom{\cT}(?,X):\cT\to (\MOD k)^{\op}
\]
admits the right
adjoint~$D(?)\otimes_k X \cong \Hom{k}(?,X)$, and the morphism~$U\xrightarrow{f} X^d$ is then the unit of 
adjunction
\[
Id_{\cT} \xrightarrow{} \Hom{k}\left( \Hom{\cT}(?,X), X\right)
\] 
applied to~$U$.

\item Our definition of~$\cyl_X$ is similar to, but different from, the (dual) definition of the twist~$\operatorname{Tw}_X$ (or spherical twist if~$X$ is a spherical object) \cite[Definition 2.7]{SeidelThomas}.  Indeed, the latter involves the morphisms in every degree from~$U$ to~$X$, while the definition of~$\cyl_X U$ only involves the morphisms in degree~$0$.
  \item In the Appendix~\ref{appendix}, Bernhard Keller constructs a truncated version~$t^0_X$ of the twist functor.  Applying Example~\ref{ex:trunc-twist}(2) 
 to the case where~$\cA = \Lambda$, we get that the truncated twist functor~$t^0_X$ and the operator~$\cyl_X$ act in the same way on certain objects; this happens, for instance, in the setting of Lemma~\ref{lemm::rigid-cylinders}.
 \end{enumerate}
\end{remark}

\subsection{Commuting cylinders}
The first key lemma in the proof of Theorem~\ref{theo::density} is a condition for cylinders to commute.
\begin{lemma}[Commuting cylinders]\label{lemm::commuting-cylinders}
 Let~$\cT$ be an essentially small,~$\Hom{}$-finite, Krull-Schmidt, triangulated category with suspension functor~$\Sigma$.  Let~$X$ and~$Y$ be non-isomorphic indecomposable objects of~$\cT$, and let~$U$ be any object of~$\cT$.  Assume that the following hold:
 \begin{enumerate}
  \item $\Hom{\cT}(X,\Sigma Y) = \Hom{\cT}(Y, \Sigma X) = 0$;
  \item for any morphism~$\phi\in \Hom{\cT}(U,X)$ and any morphism~$\psi\in \Hom{\cT}(X,Y)$, the composition~$\psi\phi$ vanishes;
  \item for any morphism~$\phi'\in \Hom{\cT}(U,Y)$ and any morphism~$\psi'\in \Hom{\cT}(Y,X)$, the composition~$\psi'\phi'$ vanishes.
 \end{enumerate}
 Then~$\cyl_X \cyl_Y U$ is isomorphic to~$\cyl_Y \cyl_X U$.
\end{lemma}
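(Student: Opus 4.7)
The plan is to apply the octahedral axiom to a suitable composition and to identify the resulting triangle with the defining triangle of $\cyl_X\cyl_Y U$. Write $u_1\colon\cyl_X U\to U$ and $u_2\colon\cyl_Y U\to U$ for the connecting morphisms, $(f_i)_{i=1}^d$ and $(g_j)_{j=1}^e$ for the chosen bases of $\Hom{\cT}(U,X)$ and $\Hom{\cT}(U,Y)$, and $f\colon U\to X^{d}$, $g\colon U\to Y^{e}$ for the corresponding evaluation morphisms.

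The key technical preliminary is a Hom-computation. Applying $\Hom{\cT}(-,Y)$ to the defining triangle $\Sigma^{-1}X^{d}\to\cyl_X U\to U\xrightarrow{f}X^{d}$ yields a long exact sequence whose relevant portion is
\[
\Hom{\cT}(X^{d},Y)\xrightarrow{f^{*}}\Hom{\cT}(U,Y)\xrightarrow{u_1^{*}}\Hom{\cT}(\cyl_X U,Y)\longrightarrow\Hom{\cT}(X^{d},\Sigma Y).
\]
Hypothesis~(1) kills the rightmost term; hypothesis~(2) kills $f^{*}$, since each matrix entry is a composition $U\xrightarrow{f_i}X\xrightarrow{\psi}Y=0$. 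Consequently, $u_1^{*}$ is an isomorphism, $(g_j u_1)_j$ is a basis of $\Hom{\cT}(\cyl_X U,Y)$, and $\cyl_Y\cyl_X U$ may be constructed via the triangle
\[
\Sigma^{-1}Y^{e}\longrightarrow\cyl_Y\cyl_X U\longrightarrow\cyl_X U\xrightarrow{g\circ u_1}Y^{e}.
\]
A symmetric argument using~(1) and~(3) shows that $(f_i u_2)_i$ is a basis of $\Hom{\cT}(\cyl_Y U,X)$, so $\cyl_X\cyl_Y U$ is obtained as the shifted fiber of $f\circ u_2\colon\cyl_Y U\to X^{d}$.

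I would then apply the octahedral axiom to the composition $\cyl_X U\xrightarrow{u_1}U\xrightarrow{g}Y^{e}$, whose total composite is $g\circ u_1$. The cones of the three morphisms are $X^{d}$, $\Sigma\cyl_Y U$, and $\Sigma\cyl_Y\cyl_X U$ respectively, and the octahedral axiom produces a distinguished triangle
\[
X^{d}\longrightarrow\Sigma\cyl_Y\cyl_X U\longrightarrow\Sigma\cyl_Y U\xrightarrow{w}\Sigma X^{d}
\]
whose connecting morphism~$w$ is, by the standard octahedral formula, the composite of the connecting map $\Sigma\cyl_Y U\to\Sigma U$ arising from the triangle for $g$ (namely $\pm\Sigma u_2$) with $\Sigma f\colon\Sigma U\to\Sigma X^{d}$. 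Hence $w=\pm\Sigma(f\circ u_2)$, and desuspending gives the distinguished triangle
\[
\Sigma^{-1}X^{d}\longrightarrow\cyl_Y\cyl_X U\longrightarrow\cyl_Y U\xrightarrow{\pm f\circ u_2}X^{d},
\]
which is (up to the sign, absorbed into the basis choice) the defining triangle of $\cyl_X\cyl_Y U$. Therefore $\cyl_Y\cyl_X U\cong\cyl_X\cyl_Y U$.

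The only nontrivial step is the identification of the octahedral connecting morphism~$w$ with $\pm\Sigma(f\circ u_2)$; this is a direct application of the standard explicit formula for the third side of the octahedron. The remaining ingredients—the Hom-vanishing that rigidifies the basis choices and the routine manipulation of cones—follow mechanically from hypotheses~(1), (2), and~(3).
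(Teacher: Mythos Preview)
Your proof is correct and follows essentially the same approach as the paper's: the same $\Hom$-computation using hypotheses (1)--(3) to identify the defining triangles of $\cyl_Y\cyl_X U$ and $\cyl_X\cyl_Y U$ via $g\circ u_1$ and $f\circ u_2$, followed by the octahedral axiom applied to the composition $\cyl_X U\xrightarrow{u_1}U\xrightarrow{g}Y^e$ to produce a triangle whose third morphism is (up to sign) $f\circ u_2$. The paper draws the octahedron explicitly rather than invoking the formula for the connecting map, but the argument is the same.
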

\begin{proof}
 Let
 \[
 \Sigma^{-1} X^d \xrightarrow{} \cyl_{X}U \xrightarrow{x} U \xrightarrow{f} X^d \quad \textrm{and} \quad
 \Sigma^{-1} Y^e \xrightarrow{} \cyl_{Y}U \xrightarrow{y} U \xrightarrow{g} Y^e
 \]
 be the triangles defining~$\cyl_X U$ and~$\cyl_Y U$.  Applying~$\Hom{\cT}(-, Y)$ to the first triangle, we get the exact sequence
 \[
  \Hom{\cT}(X^d,Y) \to \Hom{\cT}(U,Y) \xrightarrow{x^*} \Hom{\cT}(\cyl_X U, Y)\to \Hom{\cT}(\Sigma^{-1} X^d, Y).
 \]
  By (2), the leftmost morphism vanishes.  By (1), the space~$\Hom{\cT}(\Sigma^{-1} X^d, Y)$ vanishes.  Therefore~$\Hom{\cT}(U,Y) \xrightarrow{x^*} \Hom{\cT}(\cyl_X U, Y)$ is an isomorphism.
  
  By applying~$\Hom{\cT}(-,X)$ to the second triangle above, and using (3) instead of (2), we get that~$\Hom{\cT}(U,X) \xrightarrow{y^*} \Hom{\cT}(\cyl_Y U, X)$ is also an isomorphism.
  
  Since~$x^*$ is an isomorphism, we get that~$(g_1 x, \ldots, g_e x)$ is a basis of~$\Hom{\cT}(\cyl_X U, Y)$, and so the defining triangle for~$\cyl_Y \cyl_X U$ is isomorphic to
  \[
   \Sigma^{-1} Y^e \xrightarrow{} \cyl_Y\cyl_{X}U \xrightarrow{} \cyl_X U \xrightarrow{gx} Y^e.
  \]
  Similarly, since~$y^*$ is an isomorphism, we get that the defining triangle for~$\cyl_X\cyl_Y U$ is isomorphic to
  \[
\Sigma^{-1} X^d \xrightarrow{} \cyl_{X}\cyl_Y U \xrightarrow{} \cyl_Y U \xrightarrow{fy} X^d.
  \]
  Finally, applying the octahedral axiom to the composition~$gx$ yields an octahedron
  \begin{center}
  $\phantom{xxxx}$\begin{xy} 0;<1pt,0pt>:<0pt,-1pt>:: 
(105,0) *+{\Sigma\cyl_Y\cyl_X U} ="0",
(74,90) *+{\cyl_X U} ="1",
(207,90) *+{Y^e} ="2",
(0,69) *+{X^d} ="3",
(103,147) *+{U} ="4",
(133,69) *+{\Sigma\cyl_Y U} ="5",
"0", {\ar|+ "1"},
"2", {\ar "0"},
"3", {\ar^{} "0"},
"0", {\ar@{.>}^{} "5"},
"1", {\ar^{gx} "2"},
"3", {\ar|+ "1"},
"1", {\ar^x "4"},
"4", {\ar^g "2"},
"2", {\ar@{.>} "5"},
"4", {\ar^f "3"},
"5", {\ar@{.>}|+_{\Sigma (fy)}"3"},
"5", {\ar@{.>}^{-\Sigma y} |+"4"},
\end{xy} 
\end{center}
Thus we have a triangle
\[
 \Sigma^{-1} X^d \xrightarrow{} \cyl_Y\cyl_X U \xrightarrow{} \cyl_Y U \xrightarrow{fy} X^d.
\]
Comparing with the previous triangle (whose last morphism was also~$fy$), we get that the objects~$\cyl_{X}\cyl_Y U$ and~$\cyl_Y\cyl_X U$ are isomorphic.
\end{proof}

\subsection{Rigid cylinders}
The second key lemma in the proof of Theorem~\ref{theo::density} is a condition for cylinders of rigid objects to be rigid in the category~$K^{[-1,0]}(\proj\Lambda)$.

\begin{lemma}[Rigid cylinders]\label{lemm::rigid-cylinders}
 Let~$H$ be an indecomposable object of~$K^{[-1,0]}(\proj\Lambda)$ such that~$\Hom{K^b(\proj\Lambda)}(H, \Sigma H)$ is one-dimensional, and let~$U$ be another object of~$K^{[-1,0]}(\proj\Lambda)$ satisfying the following:
 \begin{enumerate}
  \item\label{item-rigid} $U$ is rigid, that is,~$\Hom{K^b(\proj\Lambda)}(U, \Sigma U) = 0$;
  \item\label{item-no-ext} $\Hom{K^b(\proj\Lambda)}(H, \Sigma U) = 0$;
  \item\label{item-injective} for any non-zero~$g\in \Hom{\cD\Lambda}(\Sigma H, \nu H)$ the induced morphism
   \[
    \Hom{K^b(\proj\Lambda)}(U, \Sigma H) \xrightarrow{g_*} \Hom{\cD\Lambda}(U, \nu H)
   \]
   is injective, where~$\nu=-\otimes_\Lambda^L D\Lambda$ is the Nakayama functor.  
 \end{enumerate}
  Then~$\cyl_{\Sigma H} U$ is in~$K^{[-1,0]}(\proj\Lambda)$ and also satisfies~(\ref{item-rigid}), (\ref{item-no-ext}) and (\ref{item-injective}).
\end{lemma}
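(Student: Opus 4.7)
My plan is to exploit the defining triangle
\[
H^d \xrightarrow{\alpha} C \xrightarrow{\beta} U \xrightarrow{f} \Sigma H^d,
\]
where $C := \cyl_{\Sigma H} U$ and $f = (f_1, \ldots, f_d)^T$ with $(f_i)$ a basis of $\Hom{}(U, \Sigma H)$, and to verify each assertion via long exact sequences. That $C$ lies in $K^{[-1,0]}(\proj\Lambda)$ is immediate: a chain map $U \to \Sigma H^d$ has only a degree $-1$ component, so $\cone(f)$ is concentrated in degrees $[-2,-1]$ and $C = \Sigma^{-1}\cone(f)$ in degrees $[-1,0]$.

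The central computation is that applying $\Hom{}(-,\Sigma H)$ to the triangle, together with the vanishing $\Hom{}(U,\Sigma^2 H) = 0$ (valid since $U, H \in K^{[-1,0]}$) and the surjectivity of $f^*: \Hom{}(\Sigma H^d, \Sigma H) \to \Hom{}(U, \Sigma H)$ (each $f_i$ is recovered from the standard basis vector in $\End{}(H)^d$), yields an isomorphism $\alpha^*: \Hom{}(C, \Sigma H) \iso \Hom{}(H^d, \Sigma H) \cong k^d$. The decisive use of hypothesis (\ref{item-injective}) on $U$ is its following reformulation: via Auslander-Reiten/Serre duality $\Hom{\cD\Lambda}(U, \nu H) \cong D\Hom{}(H, U)$, the map $g_*$ corresponds, up to the scalar $\mathrm{tr}(gh_0)$ (where $h_0$ generates $\Hom{}(H,\Sigma H)$), to the composition pairing $\Hom{}(U,\Sigma H) \otimes \Hom{}(H,U) \to \Hom{}(H,\Sigma H) = k$; moreover (\ref{item-injective}) forces $\mathrm{tr}(gh_0) \neq 0$ whenever $d > 0$. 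Thus (\ref{item-injective}) is equivalent to non-degeneracy of this pairing in the first argument, which by finite-dimensional duality is equivalent to non-degeneracy in the second, i.e.\ to the surjectivity of
\[
f_{*,H}: \Hom{}(H, U) \to \Hom{}(H, \Sigma H^d) \cong k^d, \qquad \phi \mapsto (f_i\phi)_{i=1}^d.
\]

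With these two ingredients, the three conditions for $C$ all fall out. For (\ref{item-no-ext}), applying $\Hom{}(H, -)$ to the triangle and using $\Hom{}(H, \Sigma U) = 0$ (hypothesis (\ref{item-no-ext}) on $U$) identifies $\Hom{}(H, \Sigma C)$ with $\Coker(f_{*,H}) = 0$. For (\ref{item-rigid}), the long exact sequence of $\Hom{}(-, \Sigma U)$ first yields $\Hom{}(C, \Sigma U) = 0$ from hypotheses (\ref{item-rigid}) and (\ref{item-no-ext}) on $U$, whence $\Hom{}(C, \Sigma C) \cong \Coker[f_*: \Hom{}(C, U) \to \Hom{}(C, \Sigma H^d)]$; using the isomorphism $\alpha^*$ together with the surjection $\Hom{}(C, U) \twoheadrightarrow \Hom{}(H^d, U)$ (from the LES of $\Hom{}(-, U)$ and hypothesis (\ref{item-rigid}) on $U$), this reduces column by column to the surjectivity of $f_{*,H}$. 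For (\ref{item-injective}), the naturality square
\[
\begin{tikzcd}
\Hom{}(C, \Sigma H) \arrow[r, "g_*"] \arrow[d, "\alpha^*"', "\wr"] & \Hom{\cD\Lambda}(C, \nu H) \arrow[d, "\alpha^*"] \\
\Hom{}(H^d, \Sigma H) \arrow[r, "g_*"] & \Hom{\cD\Lambda}(H^d, \nu H)
\end{tikzcd}
\]
has invertible left arrow and injective bottom arrow (since $gh_0 \neq 0$ in $\Hom{}(H, \nu H) = D\End{}(H)$, by the nonvanishing of $\mathrm{tr}(gh_0)$), so a short diagram chase delivers injectivity of the top.

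The principal obstacle is the careful translation of hypothesis (\ref{item-injective}) into the bilinear-pairing reformulation --- specifically, verifying that the Serre-duality identification matches $g_*$ with the composition pairing up to the scalar $\mathrm{tr}(gh_0)$, and that this scalar must be nonzero as soon as $d > 0$. Once this dictionary is in place, the rest reduces to standard diagram chasing with the long exact sequences attached to the defining triangle, fed by the input vanishings from hypotheses (\ref{item-rigid}) and (\ref{item-no-ext}) on $U$.
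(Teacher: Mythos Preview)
Your proof is correct and follows essentially the same route as the paper's: both exploit the defining triangle $H^d\to C\to U\xrightarrow{f}\Sigma H^d$ and the attached long exact sequences, and both reduce the key step to the surjectivity of $f_{*}:(H^d,U)\to(H^d,\Sigma H^d)$. The only notable difference is in how hypothesis~(\ref{item-injective}) is unpacked: the paper dualises $f_*$ directly via the Nakayama functor and computes that $f^*:(\Sigma H^d,\nu H^d)\to(U,\nu H^d)$ is injective, whereas you first translate (\ref{item-injective}) through Serre duality into left-nondegeneracy of the composition pairing $(U,\Sigma H)\times(H,U)\to(H,\Sigma H)\cong k$ and then use that this forces the adjoint map $f_{*,H}$ to be surjective. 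These are two phrasings of the same linear-algebra fact.

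Two small remarks. First, your sentence ``non-degeneracy in the first argument \ldots\ is equivalent to non-degeneracy in the second'' is imprecise as stated (left and right radicals of a pairing need not vanish together); what you actually use, and what is true, is that injectivity of $V\to W^*$ is equivalent to \emph{surjectivity} of $W\to V^*$, which is exactly the surjectivity of $f_{*,H}$ you want. Second, the paper obtains $gh_0\neq 0$ directly from nondegeneracy of the Serre pairing on the one-dimensional spaces $(H,\Sigma H)$ and $(\Sigma H,\nu H)$, so no appeal to the hypothesis $d>0$ is needed; your derivation via (\ref{item-injective}) is valid but slightly more roundabout.
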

\begin{proof}
 The triangle~$H^d \xrightarrow{} \cyl_{\Sigma H}U \xrightarrow{} U \xrightarrow{f} \Sigma H^d$ shows that the object~$\cyl_{\Sigma H}U$ is in~$K^{[-1,0]}(\proj\Lambda)$, since this category is closed under extensions.  Moreover, it induces the following commutative diagram with exact rows and columns (where we write~$(A,B)$ instead of~$\Hom{K^b(\proj\Lambda)}(A,B)$).
 \begin{center}
 \[  
  \!\!\!\!\!\!\!\!\!\!\!\!\!\!\!\!\!\!\!\!\!\!\!\!\!\!\!\!\!\!\!\!\!\!\!\begin{tikzcd}[scale=.5]
   &&&(H^d, U) \ar{r}\ar{d} & (\Sigma^{-1}U, U)\ar{d} \\
   (\Sigma H^d, \Sigma H^d) \ar{r}\ar{d} & (U, \Sigma H^d) \ar{r}\ar{d} & (\cyl_{\Sigma H}U, \Sigma H^d) \ar{r}\ar{d} & (H^d, \Sigma H^d) \ar{r}\ar{d} & (\Sigma^{-1}U, \Sigma H^d)\ar{d} \\
   (\Sigma H^d, \Sigma \cyl_{\Sigma H}U) \ar{r}\ar{d} & (U, \Sigma \cyl_{\Sigma H}U) \ar{r}\ar{d} & (\cyl_{\Sigma H}U, \Sigma \cyl_{\Sigma H}U) \ar{r}\ar{d} & (H^d, \Sigma \cyl_{\Sigma H}U) \ar{r}\ar{d} & (\Sigma^{-1}U, \Sigma \cyl_{\Sigma H}U)\ar{d} \\
   (\Sigma H^d, \Sigma U) \ar{r} & (U, \Sigma U) \ar{r} & (\cyl_{\Sigma H}U, \Sigma U) \ar{r} & (H^d, \Sigma U) \ar{r} & (\Sigma^{-1}U, \Sigma U) \\
  \end{tikzcd} 
 \]
 \end{center}
 
 All spaces in the rightmost column vanish.  Indeed, the top one vanishes by condition (\ref{item-rigid}), and all others vanish since for any pair objects~$A$ and~$B$ in~$K^{[-1,0]}(\proj\Lambda)$, the space~$(\Sigma^{-1}A, \Sigma B)$ vanishes.  Moreover, the spaces~$(U,\Sigma U)$ and~$(H^d, \Sigma U)$ of the bottom row vanish by conditions~(\ref{item-rigid}) and~(\ref{item-no-ext}).  Therefore, the space~$(\cyl_{\Sigma H}U, \Sigma U)$ vanishes as well.
 
 By construction, any morphism from~$U$ to~$\Sigma H$ factors through~$f$.  Thus the map~$(\Sigma H^d, \Sigma H^d) \to (U, \Sigma H^d)$ in the second row is surjective, and so the one immediately to its right vanishes.  This implies that~$(\cyl_{\Sigma H}U, \Sigma H^d) \to (H^d, \Sigma H^d)$ is an isomorphism.
 
 Using the above information, the commutative diagrams simplifies considerably.
 \begin{center}
 \[  
  \!\!\!\!\!\!\!\!\!\!\!\!\!\!\!\!\!\!\!\!\!\begin{tikzcd}[scale=.5]
   &&&(H^d, U) \ar{r}\ar{d} & 0 \\
   (\Sigma H^d, \Sigma H^d) \ar[->>]{r}\ar{d} & (U, \Sigma H^d) \ar[r,"0"]\ar{d} & (\cyl_{\Sigma H}U, \Sigma H^d) \ar[r,"\sim"]\ar{d} & (H^d, \Sigma H^d) \ar{r}\ar{d} & 0 \\
   (\Sigma H^d, \Sigma \cyl_{\Sigma H}U) \ar{r}\ar{d} & (U, \Sigma \cyl_{\Sigma H}U) \ar{r}\ar{d} & (\cyl_{\Sigma H}U, \Sigma \cyl_{\Sigma H}U) \ar{r}\ar{d} & (H^d, \Sigma \cyl_{\Sigma H}U) \ar{r}\ar{d} & 0 \\
   (\Sigma H^d, \Sigma U)  & 0  & 0  & 0  &  \\
  \end{tikzcd} 
 \]
 \end{center}

 We are now ready to prove that~$\cyl_{\Sigma H}U$ satisfies properties~(\ref{item-rigid}) to~(\ref{item-injective}).
 
 \smallskip

 \noindent\emph{Proof of~(\ref{item-no-ext}).}  To prove that~$(H,\Sigma \cyl_{\Sigma H}U)$ vanishes, it suffices to prove that the map~$(H^d,U)\xrightarrow{f_*} (H^d, \Sigma H^d)$ in the rightmost non-zero column is surjective.  Applying the duality~$D$, this is equivalent to showing that the map~$D(H^d, \Sigma H^d) \xrightarrow{Df_*} D(H^d,U)$ is injective.  Applying the properties of the Nakayama functor~$\nu$, this is equivalent to proving that the map~$(\Sigma H^d, \nu H^d) \xrightarrow{f^*} (U, \nu H^d)$ is injective.
 
 Now,~$(H, \Sigma H)$ is one dimensional, and therefore, so is~$(\Sigma H, \nu H)$.  Let~$g$ be a non-zero element in~$(\Sigma H, \nu H)$; it is unique up to rescaling.  An element~$\phi$ of~$(\Sigma H^d, \nu H^d)$ can be viewed as a~$d\times d$ matrix with multiples of~$g$ as entries, say~$\phi = (\phi_{i,j}g)_{i,j}$.  Then
 \[
  f^*(\phi) = \phi f = \begin{bmatrix}
                        \phi_{1,1}g & \cdots & \phi_{1,d}g \\
                        \vdots & \ddots & \vdots \\
                        \phi_{d,1}g & \cdots & \phi_{d,d}g
                       \end{bmatrix} \begin{bmatrix}
                                      f_1 \\ \vdots \\ f_d
                                     \end{bmatrix} = \begin{bmatrix}
                                                      \sum_{i=1}^d \phi_{1,i}gf_i \\
                                                       \vdots \\
                                                      \sum_{i=1}^d \phi_{d,i}gf_i
                                                     \end{bmatrix}
                                                     =   \begin{bmatrix}
                                                          g\big(\sum_{i=1}^d \phi_{1,i}f_i\big) \\
                                                          \vdots \\
                                                          g\big(\sum_{i=1}^d \phi_{d,i}f_i\big)
                                                         \end{bmatrix}.
 \]
 Thus~$f^*(\phi)$ vanishes if and only if~$g\big(\sum_{i=1}^d \phi_{j,i}f_i\big)$ vanishes for all~$j\in\{1, \ldots, d\}$.  By condition~(\ref{item-injective}), this only happens if~$\big(\sum_{i=1}^d \phi_{j,i}f_i\big)$ vanishes for all~$j$.  Since the~$f_i$ are linearly independent, this is only possible if all~$\phi_{i,j}$ vanish.  Thus~$f^*(\phi) = 0$ implies that~$\phi =0$, and~$f^*$ is injective.  By the above, this shows that the map~$(H^d,U)\xrightarrow{f_*} (H^d, \Sigma H^d)$ is surjective, and so~$(H,\Sigma \cyl_{\Sigma H}U)$ vanishes.
 
 \smallskip
 
 \noindent \emph{Proof of~(\ref{item-injective}).} As above, let~$g$ be a non-zero element of~$(\Sigma H, \nu H)$ (it is unique up to rescaling).  There is a bifunctorial non-degenerate bilinear form
 \[
  (-,-) : (A,B) \times (B, \nu A) \to k,
 \]
 where~$A,B$ can be any objects of~$K^b(\proj\Lambda)$ (see, for instance, \cite{Happel1988}).  In particular, if~$h$ is a non-zero element of~$(H, \Sigma H)$, then~$(h,g) \neq 0$, since~$(H,\Sigma H)$ is one-dimensional and the form is non-degenerate.  But the bifunctoriality of the bilinear form implies that~$(h,g) = (id_{H}, gh)$.  Thus~$gh\neq 0$.  This implies that the map~$(H,\Sigma H) \xrightarrow{g_*} (H, \nu H)$ is injective.
 
 We wish to prove that~$(\cyl_{\Sigma H}U,\Sigma H) \xrightarrow{g_*} (\cyl_{\Sigma H}U,\nu H)$ is injective.  Consider the following commutative diagram with exact rows:
 \begin{center}
 \begin{tikzcd}
  (\Sigma H^d, \Sigma H) \ar[r,"f^*",->>]\ar[d] & (U,\Sigma H) \ar[r,"0"]\ar[d] & (\cyl_{\Sigma H} U, \Sigma H) \ar[r,hookrightarrow]\ar[d,"g_*"] & (H^d, \Sigma H)\ar[d,"g_*",hookrightarrow] \\
  (\Sigma H^d, \nu H) \ar[r,"f^*"] & (U,\nu H) \ar[r] & (\cyl_{\Sigma H} U, \nu H) \ar[r] & (H^d, \nu H)
 \end{tikzcd}
 \end{center}
 The uppermost, leftmost morphism is surjective since all morphisms from~$U$ to~$\Sigma H$ factor through~$f$; thus the upper middle map is zero and the upper, rightmost map is injective.  The rightmost vertical morphism is injective by the above.  Thus the map~$(\cyl_{\Sigma H}U,\Sigma H) \xrightarrow{g_*} (\cyl_{\Sigma H}U,\nu H)$ is injective.
 
 \smallskip
 
 \noindent\emph{Proof of~(\ref{item-rigid}).}  As a consequence of~(\ref{item-no-ext}) proved above, we get a commutative square
 \begin{center}
  \begin{tikzcd}
   (U,\Sigma H^d) \ar[r,"0"]\ar[d,->>] & (\cyl_{\Sigma H} U, \Sigma H^d)\ar[d] \\
   (U, \Sigma \cyl_{\Sigma H} U) \ar[r,->>] & (\cyl_{\Sigma H} U, \Sigma \cyl_{\Sigma H} U)
  \end{tikzcd}
 \end{center}
 where the top map is zero and the left and bottom maps are surjective.  Thus the composition of the top and right morphism is both zero and surjective.  Thus the space~$(\cyl_{\Sigma H} U, \Sigma \cyl_{\Sigma H} U)$ vanishes.

\end{proof}

\subsection{$\bg$-vectors of cylinders}

\begin{lemma}\label{lemm::g-vector-iterated-cylinders}
 Let~$H$ be an indecomposable object of~$K^{[-1,0]}(\proj\Lambda)$ such that~$(H,\Sigma H)$ is one-dimensional, and let~$U$ be another object of~$K^{[-1,0]}(\proj\Lambda)$.  Then for any~$m\in\bZ_{>0}$, the object~$\cyl_{\Sigma H}^m U$ is in~$K^{[-1,0]}(\proj\Lambda)$, and if~$d = \dim (U, \Sigma H)$, then
 \[
  [\cyl_{\Sigma H}^m U] = [U] + md[H].
 \]
\end{lemma}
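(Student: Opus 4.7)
\medskip

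\noindent\textbf{Proof plan.}
The plan is to proceed by induction on $m$. For the base case $m=1$, the defining triangle
\[
 H^d \to \cyl_{\Sigma H} U \to U \xrightarrow{f} \Sigma H^d
\]
(where $f = (f_1, \ldots, f_d)^T$ records a basis of $(U, \Sigma H)$) can be unravelled by a direct mapping-cone computation to produce an explicit two-term representative of $\cyl_{\Sigma H} U$, establishing that $\cyl_{\Sigma H} U$ lies in $K^{[-1,0]}(\proj\Lambda)$. Taking classes in $K_0(\proj\Lambda)$ then immediately yields $[\cyl_{\Sigma H} U] = [U] + d[H]$.

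For the inductive step, set $U' := \cyl_{\Sigma H} U$; it will suffice to show that $d' := \dim(U', \Sigma H)$ equals $d$, for then applying the $(m-1)$-case to $U'$ gives
\[
 [\cyl_{\Sigma H}^m U] = [U'] + (m-1) d [H] = [U] + md[H].
\]
To compute $d'$, apply the cohomological functor $(-, \Sigma H)$ to the defining triangle to obtain the long exact sequence
\[
 (\Sigma H^d, \Sigma H) \xrightarrow{f^*} (U, \Sigma H) \to (U', \Sigma H) \to (H^d, \Sigma H) \to (\Sigma^{-1}U, \Sigma H).
\]
The rightmost term is isomorphic to $(U, \Sigma^2 H)$ and vanishes, since $\Sigma^2 H$ is supported in cohomological degrees $[-3, -2]$, disjoint from the support of $U$, so every chain map is identically zero. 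The map $f^*$ is surjective: composing $f$ with the $i$-th projection $\Sigma H^d \to \Sigma H$ recovers the basis vector $f_i$, so the image of $f^*$ spans $(U, \Sigma H)$. Consequently $(U', \Sigma H) \cong (H^d, \Sigma H) = (H, \Sigma H)^d$, which is $d$-dimensional by the hypothesis on $H$.

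The only subtlety will be verifying the two-term mapping-cone representative in the base case, a routine calculation that essentially uses that both $U$ and $H$ are concentrated in degrees $-1$ and $0$. Beyond that, the argument rests entirely on the exactness of the defining triangle, the vanishing of Homs between complexes with disjoint cohomological supports, and the one-dimensionality of $(H, \Sigma H)$.
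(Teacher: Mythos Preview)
Your proof is correct and follows essentially the same approach as the paper's. The only cosmetic difference is that the paper invokes the fact that $K^{[-1,0]}(\proj\Lambda)$ is closed under extensions rather than writing out the mapping cone explicitly, and the paper is slightly terser about why the map $(\Sigma H^d,\Sigma H)\to (U,\Sigma H)$ is surjective (a point you spell out more carefully).
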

\begin{proof}
 The triangle~$H^d \to \cyl_{\Sigma H} U \to U \to \Sigma H^d$ implies that the object~$\cyl_{\Sigma H} U$ is in~$K^{[-1,0]}(\proj\Lambda)$, since this category is closed under extensions.  It also implies that~$[\cyl_{\Sigma H} U] = [U] + d[H]$.  Applying the functor~$(-,\Sigma H)$, we get an exact sequence
 \begin{center}
  \begin{tikzcd}
   (\Sigma H^d, \Sigma H) \ar[r,->>] & (U, \Sigma H)\ar[r,"0"] & (\cyl_{\Sigma H} U, \Sigma H)\ar[r] & (H^d, \Sigma H) \ar[r] & (\Sigma^{-1} U, \Sigma H).
  \end{tikzcd}
 \end{center}
 Since~$(\Sigma^{-1} U, \Sigma H) = 0$, we deduce that~$(\cyl_{\Sigma H} U, \Sigma H)$ and~$(H^d, \Sigma H)$ are isomorphic.  Thus~$\dim (\cyl_{\Sigma H} U, \Sigma H) = d$.  From there, we apply induction on~$m$ to get the desired equality.
\end{proof}

\begin{lemma}\label{lemm::g-vector-of-commuting-cylinders}
 Under the hypotheses of Lemma~\ref{lemm::commuting-cylinders}, if~$d_X = \dim (U,X)$ and~$d_Y = \dim (U,Y)$, then
 \[
  [\cyl_X \cyl_Y U] = [U] - d_X[X] - d_Y[Y] \in K_0(\cT).
 \]
\end{lemma}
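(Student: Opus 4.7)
The plan is to derive the $\bg$-vector formula directly from the defining triangle of $\cyl_X$, after identifying the relevant Hom-space dimension using computations already carried out in the proof of Lemma~\ref{lemm::commuting-cylinders}.

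First I would record the basic Grothendieck-group identity: for any object $V$ of $\cT$, the defining triangle
\[
\Sigma^{-1} X^d \to \cyl_X V \to V \xrightarrow{f} X^d
\]
with $d = \dim \Hom_\cT(V,X)$ gives in $K_0(\cT)$ the equality
\[
[\cyl_X V] = [V] - \dim \Hom_\cT(V,X) \cdot [X].
\]
Applying this first to $V = U$ and the object $Y$ in place of $X$ yields $[\cyl_Y U] = [U] - d_Y[Y]$.

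Next I would apply the same identity to $V = \cyl_Y U$, which gives
\[
[\cyl_X \cyl_Y U] = [\cyl_Y U] - \dim \Hom_\cT(\cyl_Y U, X) \cdot [X].
\]
The only nontrivial step is to compute $\dim \Hom_\cT(\cyl_Y U, X)$. But this was already done inside the proof of Lemma~\ref{lemm::commuting-cylinders}: applying $\Hom_\cT(-,X)$ to the defining triangle for $\cyl_Y U$ and invoking hypotheses (1) and (3) of that lemma, one obtains that the map $\Hom_\cT(U,X) \xrightarrow{y^*} \Hom_\cT(\cyl_Y U, X)$ induced by the structural morphism $y : \cyl_Y U \to U$ is an isomorphism. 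Hence $\dim \Hom_\cT(\cyl_Y U, X) = d_X$.

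Substituting, I obtain
\[
[\cyl_X \cyl_Y U] = [U] - d_Y[Y] - d_X[X],
\]
which is the desired formula. (As a sanity check, the symmetry of the right-hand side in $X$ and $Y$ is consistent with the isomorphism $\cyl_X \cyl_Y U \cong \cyl_Y \cyl_X U$ established in Lemma~\ref{lemm::commuting-cylinders}.) There is no substantial obstacle here: the argument is essentially a bookkeeping exercise once the Hom-space dimension $\dim \Hom_\cT(\cyl_Y U,X)$ has been identified, and that identification is a byproduct of the proof of the commuting cylinders lemma.
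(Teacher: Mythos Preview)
Your proof is correct and follows essentially the same approach as the paper: both arguments reduce the computation to the Hom-space isomorphisms established inside the proof of Lemma~\ref{lemm::commuting-cylinders}. The only minor difference is that the paper computes~$[\cyl_Y\cyl_X U]$ via the triangle $\Sigma^{-1}Y^{d_Y}\to\cyl_Y\cyl_X U\to\cyl_X U\to Y^{d_Y}$ extracted from the octahedron (which uses the isomorphism~$x^*$) and then invokes the isomorphism $\cyl_X\cyl_Y U\cong\cyl_Y\cyl_X U$, whereas you compute~$[\cyl_X\cyl_Y U]$ directly from the defining triangle of~$\cyl_X$ applied to~$\cyl_Y U$ (using the isomorphism~$y^*$); your route is marginally more economical since it does not require the final conclusion of Lemma~\ref{lemm::commuting-cylinders}, only the intermediate step.
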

\begin{proof}
 In the proof of Lemma~\ref{lemm::commuting-cylinders}, we have obtained triangle~$\Sigma^{-1}X^{d_X} \to \cyl_X U \to U \to X^{d_X}$ and~$\Sigma^{-1} Y^{d_Y} \xrightarrow{} \cyl_Y\cyl_{X}U \xrightarrow{} \cyl_X U \xrightarrow{gx} Y^{d_Y}$.  It follows from them that~$[\cyl_X U] = [U] - d_X[X]$ and~$[\cyl_Y\cyl_{X}U] = [\cyl_X U] - d_Y[Y]$.  Since the object~$\cyl_Y\cyl_{X}U$ is isomorphic to~$\cyl_X\cyl_{Y}U$ by Lemma~\ref{lemm::commuting-cylinders}, we get the desired result.
\end{proof}

\begin{lemma}\label{lemm::g-vectors-of-iterated-and-commuting-cylinders}
 Let~$\Lambda$ be any finite-dimensional algebra, and let~$H_1, \ldots, H_s$ be objects of~$K^{[-1,0]}(\proj\Lambda)$ such that
 \begin{itemize}
  \item for each~$i\in\{1, \ldots, s\}$, the object~$H=H_i$ satisfies the hypotheses of Lemma~\ref{lemm::g-vector-iterated-cylinders};
  \item for each pair of disctinct~$i,j\in \{1, \ldots, s\}$, the objects~$X=\Sigma H_i$ and~$Y=\Sigma H_j$ satisfy the hypotheses of Lemma~\ref{lemm::commuting-cylinders}.
 \end{itemize}
 Let~$U$ be an object of~$K^{[-1,0]}(\proj\Lambda)$, $d_i = \dim(U,\Sigma H_i)$,  and~$a_1, \ldots, a_s \in \bZ_{>0}$.  Then~$\cyl_{\Sigma H_s}^{a_s} \cdots \cyl_{\Sigma H_1}^{a_1} U$ is an object of~$K^{[-1,0]}(\proj\Lambda)$, and
 \[
  [\cyl_{\Sigma H_s}^{a_s} \cdots \cyl_{\Sigma H_1}^{a_1} U] = [U] + \sum_{i=1}^s a_i d_i [H_i].
 \]
\end{lemma}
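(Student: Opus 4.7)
The plan is to induct on $j \in \{0, 1, \ldots, s\}$ for the sequence
\[
V_j \;:=\; \cyl_{\Sigma H_j}^{a_j} \cdots \cyl_{\Sigma H_1}^{a_1} U, \qquad V_0 = U,
\]
simultaneously carrying three inductive invariants: (a) $V_j \in K^{[-1,0]}(\proj\Lambda)$ with $[V_j] = [U] + \sum_{i=1}^j a_i d_i [H_i]$; (b) $\dim \Hom{K^b(\proj\Lambda)}(V_j, \Sigma H_\ell) = d_\ell$ for every $\ell > j$; and (c) for every pair of distinct $\ell, \ell' > j$, conditions (2) and (3) of Lemma~\ref{lemm::commuting-cylinders} continue to hold for the pair $(\Sigma H_\ell, \Sigma H_{\ell'})$ with $V_j$ in place of $U$. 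The base case $j=0$ is precisely the set of hypotheses of the current lemma.

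For the outer inductive step, I apply Lemma~\ref{lemm::g-vector-iterated-cylinders} with $H = H_j$, $U = V_{j-1}$, $m = a_j$, and $d = d_j$ (the latter equality coming from invariant (b) at the previous step); this at once yields (a) for $V_j$. For (b) and (c) I set $W_k := \cyl_{\Sigma H_j}^k V_{j-1}$ and sub-induct on $k = 0, \ldots, a_j$, using the triangle $H_j^{d_j} \to W_k \to W_{k-1} \to \Sigma H_j^{d_j}$ supplied by Lemma~\ref{lemm::g-vector-iterated-cylinders}. Applying $\Hom{K^b(\proj\Lambda)}(-, \Sigma H_\ell)$, the vanishing $\Hom{K^b(\proj\Lambda)}(H_j, \Sigma H_\ell) = 0$ (from condition (1) of Lemma~\ref{lemm::commuting-cylinders}, since $j \ne \ell$) makes the right-hand map of the long exact sequence surjective, while condition (2) applied to $W_{k-1}$ kills the leftmost connecting map; combined, these give an isomorphism $\Hom{K^b(\proj\Lambda)}(W_{k-1}, \Sigma H_\ell) \xrightarrow{\sim} \Hom{K^b(\proj\Lambda)}(W_k, \Sigma H_\ell)$ and hence (b) for $V_j = W_{a_j}$.

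The principal obstacle is the propagation of conditions (2) and (3) through the iterations of $\cyl_{\Sigma H_j}$, which is needed both to continue the sub-induction past $k=1$ and to maintain (c). For a pair whose indices are both different from $j$, propagation is straightforward: the surjection above lifts any morphism out of $W_k$ back to $W_{k-1}$, and one invokes the condition at the previous stage. The delicate case is when one index of the pair equals $j$: the map $\Hom{K^b(\proj\Lambda)}(V, \Sigma H_j) \to \Hom{K^b(\proj\Lambda)}(\cyl_{\Sigma H_j} V, \Sigma H_j)$ is then identically zero, and one must instead exploit the canonical isomorphism $\Hom{K^b(\proj\Lambda)}(\cyl_{\Sigma H_j} V, \Sigma H_j) \cong \Hom{K^b(\proj\Lambda)}(H_j^{d_j}, \Sigma H_j)$ extracted from the proof of Lemma~\ref{lemm::g-vector-iterated-cylinders}, together with $\Hom{K^b(\proj\Lambda)}(H_j, \Sigma H_\ell) = 0$, to show that any composition $\psi \circ \phi : W_k \to \Sigma H_j \to \Sigma H_\ell$ is determined by its restriction to the $H_j^{d_j}$-summand and therefore vanishes. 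The careful interleaving of the outer induction on $j$ with the inner induction on $k$ is the essential bookkeeping that makes the proof go through.
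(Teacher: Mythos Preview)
The paper's proof is one sentence (``Apply Lemmas~\ref{lemm::g-vector-iterated-cylinders} and~\ref{lemm::g-vector-of-commuting-cylinders}, and induction on~$s$''), so you have done far more work in identifying the invariants that must be carried through the induction. Your overall architecture---the outer induction on $j$ with invariants (a), (b), (c), and the inner sub-induction on $k$---is sound, and the ``straightforward'' propagation (both indices $>j$) is handled correctly by lifting along the surjection $p^*$.

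The gap is in the delicate case. You claim that $\psi\phi : W_k \to \Sigma H_\ell$ ``is determined by its restriction to the $H_j^{d_j}$-summand and therefore vanishes.'' But while $\phi$ itself is determined by $\phi\iota$ (because $\iota^* : (W_k, \Sigma H_j) \to (H_j^{d_j}, \Sigma H_j)$ is an isomorphism), the restriction map $\iota^* : (W_k, \Sigma H_\ell) \to (H_j^{d_j}, \Sigma H_\ell)$ lands in a \emph{zero} space and is therefore not injective; knowing $(\psi\phi)\iota = 0$ tells you nothing. Concretely, $\psi\phi$ lies in $\ker\iota^* = \operatorname{Im} p^*$, which is all of $(W_k, \Sigma H_\ell)$, so no vanishing follows. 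Without this step you cannot show that $p^*$ is an \emph{isomorphism} for $k \geq 2$, so invariant~(b) is not maintained past the first iterate and the $\bg$-vector formula is not established. One remedy: in the paper's sole application (Lemma~\ref{lemm::commuting-for-H}), conditions~(2) and~(3) of Lemma~\ref{lemm::commuting-cylinders} are in fact proved for \emph{every} object of $K^{[-1,0]}(\proj\Lambda)$, not just the given $U$, because every morphism $\Sigma H_i \to \Sigma H_j$ factors through $\add(\Sigma^2\Lambda)$. Under that stronger hypothesis your propagation problem disappears, and it is plausibly what the one-line proof is tacitly using.
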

\begin{proof}
 Apply Lemmas~\ref{lemm::g-vector-iterated-cylinders} and~\ref{lemm::g-vector-of-commuting-cylinders}, and induction on~$s$.
\end{proof}

\subsection{Proof of density}
We now turn to the proof of Theorem~\ref{theo::density}.  Assume that~$\Lambda$ is tame.  Let~$\bg\in \bQ^n$.  To prove that~$\bg$ is in the closure~$\bar \cF^{\bg}_{\tsilt}(\Lambda)$, it suffices to prove that it is true for a positive scalar multiple of~$\bg$.  Thus, up to positive rescaling, we can assume that~$\bg\in \bZ^n$.

We apply Theorem~\ref{theo::genericDecompositionForTame} to~$\bg$, and let 
\[
 \bg = a_1 \bg_1 + \ldots + a_r \bg_r + b_1 \bh_1 + \ldots + b_s \bh_s
\]
be the generic decomposition of~$\bg$.  Here, we have grouped the terms so that~$i\neq j$ implies~$\bg_i\neq \bg_j$ and~$\bh_i \neq \bh_j$, with all~$a_i$ and~$b_i$ in~$\bZ_{>0}$.

If~$s=0$ (that is, if no term of the form~$\bh_i$ appears in the generic decomposition of~$\bg$), then~$\bg\in \cF^{\bg}_{\tsilt}(\Lambda)$ and there is nothing to prove.  Assume then that~$s>0$.  Note that, by~\cite[Theorem 6.1]{CerulliLabardiniSchroer}, this implies that~$r<n$.

For any~$i\in\{1,\ldots,s\}$, let~$H_i$ be a generic object with~$\bg$-vector~$\bh_i$.

\begin{lemma}\label{lemm::commuting-for-H}
 The objects~$H_1, \ldots, H_s$ satisfy the hypotheses of Lemma~\ref{lemm::g-vectors-of-iterated-and-commuting-cylinders}.


\end{lemma}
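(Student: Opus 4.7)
The plan is to unpack Lemma~\ref{lemm::g-vectors-of-iterated-and-commuting-cylinders} into the two bullet-point conditions and verify, for the generic representatives $H_1,\ldots,H_s$ of $\bh_1,\ldots,\bh_s$, those conditions that depend only on the $H_i$'s; the conditions (2) and (3) of Lemma~\ref{lemm::commuting-cylinders} that involve an auxiliary object $U$ will be tracked separately when the lemma is applied in the density argument. Everything here follows by directly combining Theorem~\ref{theo::genericDecompositionForTame}, Theorem~\ref{theo::generic2termcomplexes} and Corollary~\ref{coro::factor-through-SigmaLambda}, so the argument is short; the only real care needed is in the genericity bookkeeping.

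For the first bullet, I will check that each $H_i$ fulfills the hypotheses of Lemma~\ref{lemm::g-vector-iterated-cylinders}. Indecomposability of $H_i$ is immediate from Theorem~\ref{theo::generic2termcomplexes}(1) applied to the generically indecomposable summand $\bh_i$, since $H_i$ is chosen to lie in the corresponding dense open locus. The one-dimensionality of $\Hom{K^b(\proj\Lambda)}(H_i,\Sigma H_i)$ is exactly the content of Corollary~\ref{coro::factor-through-SigmaLambda}(1), whose proof relies on the brick property of $\Ho^0 H_i$ and on $\Ho^0 H_i\cong\tau\Ho^0 H_i$, both of which are furnished by Theorem~\ref{theo::genericDecompositionForTame}(\ref{item::h}).

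For the second bullet, I will verify the hypotheses of Lemma~\ref{lemm::commuting-cylinders} for the pair $(X,Y)=(\Sigma H_i,\Sigma H_j)$ with $i\neq j$. Both objects are indecomposable since $H_i,H_j$ are, and they are non-isomorphic because $[H_i]=\bh_i\neq\bh_j=[H_j]$. Condition (1) of Lemma~\ref{lemm::commuting-cylinders}, after shifting, reads
\[
 \Hom{K^b(\proj\Lambda)}(H_i,\Sigma H_j)=0=\Hom{K^b(\proj\Lambda)}(H_j,\Sigma H_i).
\]
Since $\bh_1+\cdots+\bh_s$ is part of the generic decomposition of $\bg$, Theorem~\ref{theo::generic2termcomplexes}(2) gives $e(\bh_i,\bh_j)=e(\bh_j,\bh_i)=0$. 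By definition of $e$, the minimum of $\dim\Hom{K^b(\proj\Lambda)}(h,\Sigma h')$ over pairs $(h,h')$ in the irreducible parameter space $\Hom{\Lambda}(P^{(\bh_i)_-},P^{(\bh_i)_+})\times\Hom{\Lambda}(P^{(\bh_j)_-},P^{(\bh_j)_+})$ is then zero, and upper semicontinuity of this dimension upgrades the vanishing to a dense open subset.

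The only mild obstacle is the simultaneous genericity: one wants $H_1,\ldots,H_s$ all to be chosen so that indecomposability, one-dimensionality of $\Hom{K^b(\proj\Lambda)}(H_i,\Sigma H_i)$, and the pairwise Hom-vanishings above hold at once. Since this imposes only finitely many open dense conditions on the finite product of the parameter spaces, the intersection remains dense and such a simultaneous choice exists; this finishes the verification.
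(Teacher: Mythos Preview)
Your proposal has a genuine gap: you explicitly decline to verify conditions~(2) and~(3) of Lemma~\ref{lemm::commuting-cylinders}, saying they ``will be tracked separately when the lemma is applied in the density argument.'' But Lemma~\ref{lemm::commuting-for-H} asserts that the~$H_i$ satisfy the hypotheses of Lemma~\ref{lemm::g-vectors-of-iterated-and-commuting-cylinders}, and the second bullet of that lemma \emph{is} the full hypothesis list of Lemma~\ref{lemm::commuting-cylinders}, including~(2) and~(3). So as written you have not proved the lemma.

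More importantly, deferring these conditions misses the key idea. In the density argument the operator~$\cyl_{\Sigma H_i}$ is applied many times, to~$G'$ and then to iterated cylinders of~$G'$, so the object~$U$ keeps changing; checking~(2) and~(3) case by case would be unwieldy. The paper instead observes that~(2) and~(3) hold for \emph{every}~$U$ in~$K^{[-1,0]}(\proj\Lambda)$, and this is where Corollary~\ref{coro::factor-through-SigmaLambda}(3) is used (you never invoke it). By that corollary, every morphism~$H_i\to H_j$ factors through~$\add(\Sigma\Lambda)$, hence every morphism~$\psi:\Sigma H_i\to\Sigma H_j$ factors through~$\add(\Sigma^2\Lambda)$. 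Since~$\Hom{K^b(\proj\Lambda)}(U,\Sigma^2\Lambda)=0$ for any~$U\in K^{[-1,0]}(\proj\Lambda)$, the composition~$\psi\phi$ vanishes for every~$\phi:U\to\Sigma H_i$; this gives~(2), and~(3) follows symmetrically. Your verification of the first bullet and of condition~(1) is fine and matches the paper.
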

\begin{proof}
 To prove that each~$H_i$ satisfies the hypotheses of Lemma~\ref{lemm::g-vector-iterated-cylinders}, one simply notes that~$\dim(H_i, \Sigma H_i)$ by Corollary~\ref{coro::factor-through-SigmaLambda}(1).
 
 Let us prove that the hypotheses of Lemma~\ref{lemm::commuting-cylinders} are satisfied with~$X=\Sigma H_i$ and~$Y=\Sigma H_j$.  Condition (1) is satisfied since~$\bh_i$ and~$\bh_j$ are two distinct terms in the generic decomposition of~$\bg$.  
 
 To prove condition (2), recall from Corollary~\ref{coro::factor-through-SigmaLambda}(3) that any morphism from~$H_i$ to~$H_j$ factors through~$\add(\Sigma \Lambda)$.  Thus any morphism from~$\Sigma H_i$ to~$\Sigma H_j$ factors through~$\add(\Sigma^2 \Lambda)$, which implies that precomposing it with a morphism from~$U$ to~$\Sigma H_i$ will give zero.  Thus (2) is satisfied.
 
 The proof of condition (3) of Lemma~\ref{lemm::commuting-cylinders} is similar.
\end{proof}

Let~$G$ be a presilting object whose~$\bg$-vector is~$a_1\bg_1 + \ldots + a_r \bg_r$.  Let~$G'$ be its \emph{Bongartz co-completion}, defined by the triangle
\[
 \Lambda \xrightarrow{} G'' \xrightarrow{} G' \xrightarrow{} \Sigma \Lambda
\]
where the left-most morphism is a left~$\add G$-approximation of~$\Lambda$.  By the dual of \cite[Definition-Proposition 4.9]{Jasso}, the object~$G'\oplus G$ is a silting object in the category~$K^{[-1,0]}(\proj\Lambda)$.

\begin{lemma}\label{lemm::G'andHi}
 Taking~$U=G'$ and~$H=H_i$ (for~$i\in\{1, \ldots, s\}$), conditions~(\ref{item-rigid}) to (\ref{item-injective}) of Lemma~\ref{lemm::rigid-cylinders} are satisfied.  Moreover, the space~$\Hom{K^b(\proj\Lambda)}(G', \Sigma H_i)$ does not vanish.
\end{lemma}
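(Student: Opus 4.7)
The lemma requires four verifications: conditions (1), (2), (3) of Lemma~\ref{lemm::rigid-cylinders} for $U=G'$ and $H=H_i$, together with the non-vanishing of $\Hom{K^b(\proj\Lambda)}(G',\Sigma H_i)$. Condition (1) is immediate, since $G'\oplus G$ is silting, so $G'$ is presilting and hence rigid. For condition (2), I would apply $\Hom{K^b(\proj\Lambda)}(H_i,\Sigma -)$ to the Bongartz triangle $\Lambda\to G''\to G'\to \Sigma\Lambda$. The term $(H_i,\Sigma^2\Lambda)$ vanishes because $H_i\in K^{[-1,0]}(\proj\Lambda)$, and $(H_i,\Sigma G'')$ is contained in $(H_i,\Sigma G)$, which vanishes because $G$ decomposes as $\bigoplus_k G_k^{a_k}$ with each $G_k$ the generic presilting object of $\bg$-vector $\bg_k$, and $e(\bh_i,\bg_k)=0$ for each $k$ (since $\bh_i$ and $\bg_k$ are distinct terms in the generic decomposition of $\bg$, by Theorem~\ref{theo::generic2termcomplexes}).

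For the non-vanishing, I would apply $\Hom{K^b(\proj\Lambda)}(-,\Sigma H_i)$ to the Bongartz triangle to obtain the exact sequence
\[
(G'',H_i)\xrightarrow{\phi}(\Lambda,H_i)\to (G',\Sigma H_i)\to (G'',\Sigma H_i),
\]
where $(\Lambda,H_i)=\Ho^0 H_i$ and $(G'',\Sigma H_i)\subseteq(G,\Sigma H_i)=0$ (again by $e(\bg_k,\bh_i)=0$). The morphism $\phi$ is precomposition with the left $\add G$-approximation $\Lambda\to G''$. By Corollary~\ref{coro::factor-through-SigmaLambda}(2), every $\psi:G''\to H_i$ factors as $\psi=\psi_2\psi_1$ with $\psi_1:G''\to(\Sigma\Lambda)^m$; since $(\Lambda,\Sigma\Lambda)=0$ in $K^b(\proj\Lambda)$ for degree reasons, the composition $\psi_1\circ(\Lambda\to G'')$ vanishes, hence $\phi=0$. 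Therefore $(G',\Sigma H_i)\cong\Ho^0 H_i\neq 0$, the latter because $H_i$ is a non-zero brick.

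The main obstacle is condition (3): the injectivity of $g_\ast:(G',\Sigma H_i)\to(G',\nu H_i)$ induced by a non-zero $g\in(\Sigma H_i,\nu H_i)$. My plan is to perform a parallel computation of $(G',\nu H_i)$, using either the Bongartz triangle together with the Nakayama duality $(G',\nu H_i)\cong D(H_i,G')$, or by directly applying $\Hom{K^b(\proj\Lambda)}(-,\nu H_i)$ to that triangle. Under the identification $(G',\Sigma H_i)\cong\Ho^0 H_i$ obtained above, I would then describe $g_\ast$ explicitly and reduce the injectivity to the one-dimensional case. The crucial inputs are that $(\Sigma H_i,\nu H_i)$ is one-dimensional (by Corollary~\ref{coro::factor-through-SigmaLambda}(1) and Nakayama duality), so $g$ is essentially unique, and that Happel's bilinear pairing is non-degenerate (as already exploited in the proof of Lemma~\ref{lemm::rigid-cylinders}). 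The difficulty lies in tracking $g_\ast$ faithfully through both long exact sequences while keeping careful account of how $\nu$ acts on $G'$ and $H_i$ simultaneously.
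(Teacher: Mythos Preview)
Your handling of conditions (1) and (2) matches the paper's. Your argument for the non-vanishing of $(G',\Sigma H_i)$ via Corollary~\ref{coro::factor-through-SigmaLambda}(2) is correct and in fact slightly more direct than the paper's, which obtains the isomorphism $(G',\Sigma H_i)\cong(\Sigma\Lambda,\Sigma H_i)$ only as a byproduct of the diagram chase for condition~(3). (Minor slip: it is $\Ho^0 H_i$, not $H_i$, that is the non-zero brick.)

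The genuine gap is condition (3). Your framework---comparing the long exact sequences obtained by applying $\Hom{}(-,\Sigma H_i)$ and $\Hom{}(-,\nu H_i)$ to the Bongartz triangle---is the right one, and is exactly what the paper uses. But the key ingredient you have not identified is how to \emph{start} the injectivity. The paper first shows that $g_*:(\Sigma\Lambda,\Sigma H_i)\to(\Sigma\Lambda,\nu H_i)$ is injective, and this does \emph{not} come from Happel's pairing or from the one-dimensionality of $(\Sigma H_i,\nu H_i)$. Rather, taking cohomology in degree $-1$ identifies these spaces with $\Hom{\Lambda}(\Lambda,\Ho^0 H_i)$ and $\Hom{\Lambda}(\Lambda,\Ho^{-1}\nu H_i)=\Hom{\Lambda}(\Lambda,\tau\Ho^0 H_i)$; the induced map $\Ho^{-1}g:\Ho^0 H_i\to\tau\Ho^0 H_i$ is non-zero (since $g$ is), and because $\Ho^0 H_i\cong\tau\Ho^0 H_i$ is a brick by Theorem~\ref{theo::genericDecompositionForTame}(2), this non-zero map is an isomorphism. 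Once injectivity at $\Sigma\Lambda$ is established, the commutative square built from the Bongartz triangle---using $(G'',\Sigma H_i)=0$ and $(\Sigma G'',\nu H_i)\cong D(H_i,\Sigma G'')=0$---transfers it to $(G',-)$. Your invocation of Happel's pairing is the argument inside Lemma~\ref{lemm::rigid-cylinders} that shows $g_*:(H_i,\Sigma H_i)\to(H_i,\nu H_i)$ is injective; that is a different statement and does not by itself yield injectivity on $(G',\Sigma H_i)$.
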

\begin{proof}
 That~(\ref{item-rigid}) is true follows from the fact that~$G'$ is presilting.  Applying the functor~$\Hom{K^b(\proj\Lambda)}(H_i,-)$ to the triangle defining~$G'$, we get an exact sequence
 \[
  \Hom{K^b(\proj\Lambda)}(H_i,\Sigma G'') \to \Hom{K^b(\proj\Lambda)}(H_i,\Sigma G') \to \Hom{K^b(\proj\Lambda)}(H_i,\Sigma^2 \Lambda).
 \]
 The left term vanishes since~$\Hom{K^b(\proj\Lambda)}(H_i, \Sigma G)$ does, and the right term vanishes since~$\Lambda$ and~$H_i$ are in~$K^{[-1,0]}(\proj\Lambda)$.  Thus~$\Hom{K^b(\proj\Lambda)}(H_i,\Sigma G')=0$, and condition~(\ref{item-no-ext}) is true.
 
 To prove condition~(\ref{item-injective}), let~$g$ be a non-zero morphism from~$\Sigma H_i$ to~$\nu H_i$.  Then the map~$(\Sigma\Lambda, \Sigma H_i)\xrightarrow{g_*} (\Sigma\Lambda, \nu H_i)$ is injective.  Indeed, note that taking the cohomology in degree~$-1$ yields isomorphisms
 \[
  \Hom{K^b(\proj\Lambda)}(\Sigma\Lambda, \Sigma H_i) \cong \Hom{\Lambda}(\Lambda, H^{0} H_i)
 \]
 and
 \[
  \Hom{K^b(\proj\Lambda)}(\Sigma\Lambda, \nu H_i) \cong \Hom{\Lambda}(\Lambda, H^{-1}\nu H_i).
 \]
 But by Theorem~\ref{theo::genericDecompositionForTame}(2),~$H^{0} H_i$ and~$H^{-1}\nu H_i\cong \tau H^0 H_i$ are isomorphic as~$\Lambda$-modules.  Moreover, the morphism~$g$ induces a non-zero morphism~$H^{-1}g: H^{0} H_i \to H^{-1}\nu H_i$, which must thus be an isomorphism since these two isomorphic modules are bricks.  
 
 Therefore, if~$f:\Sigma\Lambda \to \Sigma H_i$ is a morphism such that~$gf =0$, then
 $H^{-1}g \circ H^{-1}f =0$, and since~$H^{-1}g$ is an isomorphism, then~$H^{-1}f =0$, and so~$f=0$ by the above.

 \bigskip
 
 Consider now the commutative diagram with exact rows
 \begin{center}
  \begin{tikzcd}
   & (\Sigma \Lambda, \Sigma H_i) \ar[d,"g_*",hookrightarrow]\ar[r] & (G', \Sigma H_i) \ar[d,"g_*"]\ar[r] & (G'',\Sigma H_i) \\
   (\Sigma G'',\nu H_i) \ar[r] & (\Sigma \Lambda,\nu H_i) \ar[r] & (G',\nu H_i)  &
  \end{tikzcd}
 \end{center}
 Now,~$(G'',\Sigma H_i)$ vanishes since~$(G, \Sigma H_i)$ does, and~$(\Sigma G'', \nu H_i)$ is isomorphic to the space~$D(H_i, \Sigma G'')$, which also vanishes.  Thus the bottom middle map is injective and the top middle map is surjective.  The left vertical map is injective by the above.  This implies that the right vertical map is injective, so condition~(\ref{item-injective}) is proved.  This also implies that the top middle map has to be injective, and is thus an isomorphism.  Thus~$(G', \Sigma H_i)$ is isomorphic to~$(\Sigma \Lambda, \Sigma H_i)$, which is non-zero. 

\end{proof}

\begin{lemma}\label{lemm::compatibility}
 Let~$U$ be an object of~$K^{[-1,0]}(\proj\Lambda)$ and~$i\in \{1, \ldots, s\}$.  If~$(G, \Sigma U) = (U,\Sigma G) = 0$, then~$(G, \Sigma \cyl_{\Sigma H_i}U) = (\cyl_{\Sigma H_i}U,\Sigma G) = 0$.
\end{lemma}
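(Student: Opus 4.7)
My approach is to exploit the defining triangle
\[
 H_i^{d} \to \cyl_{\Sigma H_i}U \to U \xrightarrow{f} \Sigma H_i^{d},
\]
where~$d = \dim \Hom{K^b(\proj\Lambda)}(U,\Sigma H_i)$, and to read off both vanishings from the long exact sequences obtained by applying~$\Hom{K^b(\proj\Lambda)}(G,-)$ and~$\Hom{K^b(\proj\Lambda)}(-,\Sigma G)$ to it.

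The single auxiliary input I will need is the Ext-orthogonality
\[
\Hom{K^b(\proj\Lambda)}(G,\Sigma H_i) = \Hom{K^b(\proj\Lambda)}(H_i,\Sigma G) = 0.
\]
This should come from genericity: since~$\bh_i$ and each~$\bg_j$ appear as distinct summands in the generic decomposition of~$\bg$, Theorem~\ref{theo::generic2termcomplexes} gives~$e(\bg_j,\bh_i) = e(\bh_i,\bg_j) = 0$ for all~$j\in\{1,\ldots,r\}$, so by the very definition of~$e$, generic representatives have vanishing~$\Hom$ to each other's shift.  Writing~$G \cong \bigoplus_j G_j^{a_j}$ as a direct sum of generic presilting objects realizing the~$\bg_j$ (which is possible by Theorem~\ref{theo::genericDecompositionForTame}(\ref{item::g})), the vanishings for each pair~$(G_j, H_i)$ combine by additivity into the claimed vanishings for~$(G, H_i)$.

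For the first conclusion~$\Hom{K^b(\proj\Lambda)}(G, \Sigma \cyl_{\Sigma H_i}U) = 0$, I will rotate the defining triangle and apply~$\Hom{K^b(\proj\Lambda)}(G, -)$ to extract the exact sequence
\[
\Hom{K^b(\proj\Lambda)}(G, \Sigma H_i^{d}) \to \Hom{K^b(\proj\Lambda)}(G, \Sigma \cyl_{\Sigma H_i}U) \to \Hom{K^b(\proj\Lambda)}(G, \Sigma U),
\]
whose leftmost term vanishes by the Ext-orthogonality and whose rightmost term vanishes by hypothesis.  For the second conclusion, I will apply the contravariant functor~$\Hom{K^b(\proj\Lambda)}(-, \Sigma G)$ to the original triangle and extract
\[
\Hom{K^b(\proj\Lambda)}(U, \Sigma G) \to \Hom{K^b(\proj\Lambda)}(\cyl_{\Sigma H_i}U, \Sigma G) \to \Hom{K^b(\proj\Lambda)}(H_i^{d}, \Sigma G),
\]
where both outer terms vanish for the same two reasons.

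I do not anticipate a substantive obstacle here.  The only delicate point is making sure that~$G$ and the~$H_i$ can be chosen simultaneously generically enough that the Ext-orthogonality above actually holds; this is a routine consequence of the fact that~$e$ is an infimum over an irreducible parameter space, but one should verify that the ``generic'' choices used earlier in the argument to construct~$G$ remain compatible with the ``generic'' choices of the~$H_i$.
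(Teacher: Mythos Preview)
Your proposal is correct and follows exactly the paper's approach: the paper's proof is the one-liner ``simply apply~$(G,-)$ and~$(-,\Sigma G)$ to the triangle $H_i^d \to \cyl_{\Sigma H_i}U \to U \to \Sigma H_i^d$'', which relies on precisely the Ext-orthogonality~$(G,\Sigma H_i) = (H_i,\Sigma G) = 0$ that you identify (and which the paper has already used implicitly in the proof of Lemma~\ref{lemm::G'andHi}). Your explicit justification of this orthogonality via the generic decomposition is correct and simply spells out what the paper takes for granted from its earlier setup.
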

\begin{proof}
 Simply apply~$(G,-)$ and~$(-,\Sigma G)$ to the triangle
 \[
  H_i^d \to \cyl_{\Sigma H_i}U \to U \to \Sigma H_i^d.
 \]

\end{proof}

We can now prove that the~$\bg\in \bZ^n$ chosen at the begining of this section is in~$\bar \cF^{\bg}_{\tsilt}(\Lambda)$, thus finishing the proof of Theorem~\ref{theo::density}.
For each~$i\in \{1, \ldots, s\}$, let~$d_i = \dim (G', \Sigma H_i)$.  By Lemma~\ref{lemm::G'andHi}, the~$d_i$ are non-zero.  Let~$d=\prod_{i=1}^s d_i$, and let~$e_i = \frac{d}{d_i}$ for each~$i$.

The, combining Lemmas~\ref{lemm::G'andHi}, \ref{lemm::rigid-cylinders}, \ref{lemm::commuting-for-H} and~\ref{lemm::commuting-cylinders},
we get that
\[
 \cyl_{\Sigma H_s}^{b_se_s} \cdots \cyl_{\Sigma H_1}^{b_1e_1} G'
\]
is a presilting object in~$K^{[-1,0]}(\proj\Lambda)$.  Moreover, 
\[
 G^{\oplus d} \oplus \cyl_{\Sigma H_s}^{b_se_s} \cdots \cyl_{\Sigma H_1}^{b_1e_1} G'
\]
is presilting by Lemma~\ref{lemm::compatibility}.  Now, using Lemma~\ref{lemm::g-vectors-of-iterated-and-commuting-cylinders}, we get that
\begin{eqnarray*}
 [G^{\oplus d} \oplus \cyl_{\Sigma H_s}^{b_se_s} \cdots \cyl_{\Sigma H_1}^{b_1e_1} G'] &=& d[G] + [G'] + \sum_{i=1}^s b_ie_id_i[H_i] \\
 &=& d([G] + \sum_{i=1}^s b_i[H_i]) + [G'] \\
 &=& d\bg + [G'].
\end{eqnarray*}
Similarly, for any~$m\in \bZ_{>0}$, we have that
\[
 G^{\oplus dm} \oplus \cyl_{\Sigma H_s}^{mb_se_s} \cdots \cyl_{\Sigma H_1}^{mb_1e_1} G'
\]
is a presilting object whose~$\bg$-vector is~$md\bg + [G']$.

The vector~$\bg$ is in the line at the limit of those generated by~$md\bg + [G']$ as~$m$ goes to infinity.  Since each~$md\bg + [G']$ is the~$\bg$-vector of a presilting objects, these vectors are in the fan~$\cF^{\bg}_{\tsilt}(\Lambda)$.  Thus~$\bg \in \bar \cF^{\bg}_{\tsilt}(\Lambda)$.  This finishes the proof of Theorem~\ref{theo::density}.



\section{Density of~$\bg$-vector fans for cluster algebras and Jacobian algebras}
\label{sect::densitiForMutationFinite}

In this section, we apply our results above to the~$\bg$-vector fans arising from the theory of cluster algebras.

\subsection{$\bg$-vectors for cluster algebras and Jacobian algebras}

We first recall the definition of~$\bg$-vectors for cluster algebras~\cite{FominZelevinsky2007}.

\begin{definition}[Proposition 6.6 of \cite{FominZelevinsky2007}]
 Let~$Q$ be a quiver without loops and~$2$-cycles, and let~$Q_0=\{1,\ldots, n\}$ be the set of its vertices.  Let~$Q^{\prin}$ be the quiver obtained by adding a vertex~$i'$ and an arrow~$i'\to i$ for every vertex~$i$ of~$Q$.  The~\emph{$\bg$-vectors} for the cluster algebras of type~$Q$ are obtained by the following mutation rule:
 \begin{enumerate}
  \item $\big(Q^{\prin},(\be_1,...,\be_n)\big)$ is a~\emph{$\bg$-vector seed}, where~$\be_i$ is the elementary vector of~$\bZ^n$ in coordinate~$i$;
  \item if~$\big( R, (\bg_1,...,\bg_n)\big)$ is a~$\bg$-vector seed, then for any vertex~$k\in \{1,\ldots, n\}$, the \emph{mutation}~$\mu_k\big( R, (\bg_1,...,\bg_n)\big) = \big( R', (\bg'_1,...,\bg'_n)\big)$ is also a~$\bg$-vector seed, where
  \[
   \bg'_k = \begin{cases}
              \bg_\ell & \textrm{if~$\ell \neq k$;} \\
              -\bg_k + \sum_{i=1}^n [b_{i,k}]_+ \bg_i - \sum_{j=1}^n [b_{j,k}]_+ \begin{pmatrix}                                                                   
                b_{1,j} \\ b_{2,j} \\ \vdots \\ b_{n,j}
              \end{pmatrix} & \textrm{if~$\ell=k$},
             \end{cases}
  \]
  where~$b_{i,j} = \#\{\textrm{arrows }i\to j\textrm{ in }R\} - \#\{\textrm{arrows }j\to i\textrm{ in }R\}$, and where~$[z]_+ = \max(z,0)$ for any real number~$z$.  
 \end{enumerate}
 The vectors~$\bg_i$ that appear in any~$\bg$-vector seed obtained by successive mutations of the initial seed~$\big(Q^{\prin},(\be_1,...,\be_n)\big)$ are the~\emph{$\bg$-vectors} for the cluster algebra of type~$Q$; ~$\bg$-vectors belonging to a common seed are \emph{compatible}. 
\end{definition}

The compatibility relation for~$\bg$-vectors allows one to organize them in a fan; this follows from the proof of conjectures in~\cite{FominZelevinsky2007}.

\begin{theorem}[Consequence of Theorem 1.7 of \cite{DerksenWeymanZelevinsky2}]\label{theo::cluster-gvector-fan}
Let $Q$ be a quiver without loops and $2$-cycles.
Then the $\bg$-vectors of cluster variables of $\cA(Q)$ are the rays of a simplicial polyhedral fan whose maximal cones are generated by sets of compatible~$\bg$-vectors.
\end{theorem}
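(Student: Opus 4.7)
The plan is to transport the simplicial fan structure from $2$-term silting theory (Theorem~\ref{theo::silting-gvector-fan}) to the cluster setting via the categorification by Jacobian algebras. First I would pick a non-degenerate potential $W$ on $Q$ (existence is guaranteed by the main result of~\cite{DerksenWeymanZelevinsky}) and consider the associated complete Jacobian algebra $\widehat{J}(Q,W)$. Its Grothendieck group $K_0(\proj \widehat{J}(Q,W))$ is naturally identified with $\bZ^{Q_0}$, matching the ambient space in which cluster $\bg$-vectors live.

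Next, I would invoke the dictionary between cluster mutation and $2$-term silting mutation. Combining the proof of the sign-coherence conjecture in~\cite{DerksenWeymanZelevinsky2} with the categorification due to Plamondon, each cluster variable of $\cA(Q)$ is assigned an indecomposable $2$-term presilting object in $K^b(\proj \widehat{J}(Q,W))$ whose $\bg$-vector, computed through the Grothendieck group, equals the combinatorial $\bg$-vector produced by the mutation rule given in the preceding definition. Under this assignment, two cluster variables belong to a common cluster precisely when the corresponding indecomposable presilting objects are direct summands of a common $2$-term silting object.

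Finally, the cluster $\bg$-vector fan $\cF^{\bg}_{\cluster}$ is realized as the subfan of $\cF^{\bg}_{\tsilt}(\widehat{J}(Q,W))$ generated by the cones coming from silting objects reachable from $\widehat{J}(Q,W)$ by iterated mutation. Since $\cF^{\bg}_{\tsilt}(\widehat{J}(Q,W))$ is a simplicial polyhedral fan by Theorem~\ref{theo::silting-gvector-fan}, any such subfan is again simplicial and polyhedral; its rays are precisely the $\bg$-vectors of cluster variables and its maximal cones are generated by full collections of compatible cluster $\bg$-vectors.

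The main obstacle is matching the combinatorial mutation rule for cluster $\bg$-vectors stated above with the mutation rule for $\bg$-vectors of $2$-term silting objects in $K^b(\proj \widehat{J}(Q,W))$. This is exactly the content of Theorem~1.7 of~\cite{DerksenWeymanZelevinsky2}, where the sign-coherence of $\bg$-vectors (also established there) is used in an essential way to show that the two rules agree. Once this identification is in place, the fan structure transfers directly from the silting setting and the theorem follows.
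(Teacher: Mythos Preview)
The paper does not supply its own proof of this statement; it merely records it as a consequence of Theorem~1.7 of~\cite{DerksenWeymanZelevinsky2}. That result is proved entirely via the mutation theory of decorated representations of quivers with potential, and it applies to an arbitrary quiver~$Q$ without loops or $2$-cycles, irrespective of whether the Jacobian algebra is finite-dimensional.

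Your route through the $2$-term silting fan is natural but, as written, has a genuine gap. You want to invoke Theorem~\ref{theo::silting-gvector-fan} for the (completed) Jacobian algebra~$\widehat{J}(Q,W)$, but that theorem is stated and proved for \emph{finite-dimensional} basic algebras, and likewise the categorification statement you need (Theorem~\ref{theo::twosiltbijcluster} in the paper) is formulated under the hypothesis that~$W$ is Jacobi-finite. For an arbitrary~$Q$ there is no reason for a non-degenerate potential to be Jacobi-finite; indeed, for many mutation-infinite quivers the Jacobian algebra is infinite-dimensional for every non-degenerate potential. Thus your argument only covers those~$Q$ admitting a non-degenerate Jacobi-finite potential, which is strictly weaker than the statement being claimed. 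The approach of~\cite{DerksenWeymanZelevinsky2} avoids this issue because it never needs~$J(Q,W)$ to be finite-dimensional: sign-coherence and the unimodularity of $G$-matrices are established directly from the mutation of decorated representations, and the fan property follows from these.
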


\begin{definition}
The simplicial polyhedral fan described in Theorem \ref{theo::cluster-gvector-fan} is called the \emph{cluster $\bg$-vector fan} associated with $Q$, and it is denoted by $\cF^{\bg}_{\cluster}(Q)$.
We also denote by $\bar \cF^{\bg}_{\cluster}(Q)$ the closure of the union of the cones of $\cF^{\bg}_{\cluster}(Q)$.
\end{definition}

\begin{example}\label{ex::rank2}
Let $m \in \bZ_{\ge 1}$ and $K_m$ be an $m$-Kronecker quiver, that is,
\[
K_m:=[
\xymatrix{1  \ar@<1mm>@{}[r]|{\vdots}\ar@<2.3mm>[r]\ar@<-2.5mm>[r] & 2}],
\]
where there are $m$ arrows between vertices $1$ and $2$.
In particular, $K_1$ is of type $A_2$ and $K_2$ is a Kronecker quiver.
The cluster $\bg$-vector fan $\cF^{\bg}_{\cluster}(K_m)$ is well known and given as in Figure \ref{Fig::m-Kronecker}.
\begin{figure}[htp]
\centering
\begin{tabular}{ccc}
\begin{tikzpicture}[baseline=-1mm,scale=1.7]
 \coordinate (0) at (0,0); \coordinate (x) at (1,0); \coordinate (-x) at (-1,0);
 \coordinate (y) at (0,1); \coordinate (-y) at (0,-1);
 \draw[->] (0)--(x); \draw (0)--(-x); \draw[->] (0)--(y); \draw (0)--(-y);
 \draw(0)--(-1,1);
\end{tikzpicture}
&
\begin{tikzpicture}[baseline=-1mm,scale=1.7]
 \coordinate (0) at (0,0); \coordinate (x) at (1,0); \coordinate (-x) at (-1,0);
 \coordinate (y) at (0,1); \coordinate (-y) at (0,-1);
 \draw[->] (0)--(x); \draw (0)--(-x); \draw[->] (0)--(y); \draw (0)--(-y);
 \draw (-1,0.5)--(0)--(-0.5,1) (-1,0.75)--(0)--(-0.75,1) (-1,0.875)--(0)--(-0.875,1) (-1,0.9375)--(0)--(-0.9375,1) (-1,0.96875)--(0)--(-0.96875,1);
 \draw[red](0)--(-1,1); \node at(-1.1,1.1){$r_+=r_-$};
\end{tikzpicture}
&
\begin{tikzpicture}[baseline=-1mm,scale=1.7]
 \coordinate (0) at (0,0); \coordinate (x) at (1,0); \coordinate (-x) at (-1,0);
 \coordinate (y) at (0,1); \coordinate (-y) at (0,-1);
 \draw[->] (0)--(x); \draw (0)--(-x); \draw[->] (0)--(y); \draw (0)--(-y);
 \draw (-1,0.28)--(0)--(-0.29,1) (-1,0.35)--(0)--(-0.35,1) (-1,0.38)--(0)--(-0.38,1) (-1,0.4)--(0)--(-0.4,1); 
 \draw[red](-1,0.42)--(0)--(-0.42,1);
 \node at(-0.8,0.48){$r_-$}; \node at(-0.5,0.8){$r_+$};
\end{tikzpicture}\\
$\cF^{\bg}_{\cluster}(K_1)$ & $\cF^{\bg}_{\cluster}(K_2)$ & $\cF^{\bg}_{\cluster}(K_3)$
\end{tabular}
   \caption{Cluster $\bg$-vector fans $\cF^{\bg}_{\cluster}(K_m)$}
   \label{Fig::m-Kronecker}
\end{figure}
Here $\cF^{\bg}_{\cluster}(K_m)$ contains infinitely many rays converging to the rays $r_{\pm}$ of slope $(-m \pm \sqrt{m^2-4})/2$ for $m \ge 2$.
If $m=2$, then $r_+=r_-$.
If $m\ge 3$, then $r_+ \neq r_-$ and the interior of the cone spanned by $r_+$ and $r_-$ is the complement of $\bar \cF^{\bg}_{\cluster}(K_m)$.
\end{example}

Second, we recall Jacobian algebras \cite{DerksenWeymanZelevinsky}.
A \emph{potential} $W$ of $Q$ is a (possibly infinite) linear combination of cycles in $Q$. 
The pair $(Q,W)$ is called a \emph{quiver with potential}.
For a cycle $\alpha_1\cdots\alpha_m$ in $Q$ and an arrow $\alpha$ of $Q$, we define
\[
\partial_{\alpha}(\alpha_1\cdots\alpha_m):=\sum_{i:\alpha_i=\alpha}\alpha_{i+1}\cdots\alpha_m\alpha_1\cdots\alpha_{i-1}.
\]
By linearity, this defines the cyclic derivative $\partial_{\alpha}(W)$ of a potential $W$ of $Q$. 
For a quiver with potential $(Q,W)$, the \emph{Jacobian algebra} $J(Q,W)$ is a quotient algebra of the complete path algebra of $Q$ by the closure of the ideal generated by the set $\{\partial_{\alpha}W\mid\alpha\in Q_1\}$.
We say that a potential $W$ of $Q$ is \emph{Jacobi-finite} if $J(Q,W)$ is finite-dimensional.

The \emph{mutation $\mu_k(Q,W)$ in direction $k$} is defined as an analogue of mutations of quivers.
We refer to \cite{DerksenWeymanZelevinsky} for details.
A potential $W$ of $Q$ is called \emph{non-degenerate} if every quiver with potential obtained from $(Q,W)$ by any sequence of mutations has no $2$-cycles.
In this case, we have $\mu_k(Q,W)=(\mu_kQ,W')$ for a non-degenerate potential $W'$ of $\mu_kQ$.
If $W$ is Jacobi-finite, then so is $W'$ \cite{DerksenWeymanZelevinsky}.
In this section, we mainly study non-degenerate Jacobi-finite potentials.

On the other hand, mutations are defined for $2$-term silting complexes in \cite{IyamaYoshino}.
Let $\tsilt^+ J(Q,W)$ (resp., $\tsilt^- J(Q,W)$) be a subset of $\tsilt J(Q,W)$ consisting of objects obtained from $J(Q,W)$ (resp., $\Sigma J(Q,W)$) by all sequences of mutations.
The Jacobian algebra $J(Q,W)$ associated with a quiver with non-degenerate Jacobi-finite potential $(Q,W)$ gives a categorification of the associated cluster algebra $\cA(Q)$.
In particular, the following result is due to the authors \cite[Corollary 4.8]{AdachiIyamaReiten}, \cite[Theorem 6.3]{FuKeller} and \cite[Corollary 3.5]{IrelliKellerLabardiniPlamondon} (see also \cite[Theorem 4.4]{Yurikusa}).

\begin{theorem}[Additive categorification of cluster algebras]\label{theo::twosiltbijcluster}
Let $Q$ be a quiver without loops and $2$-cycles. 
Let $W$ be a non-degenerate Jacobi-finite potential of $Q$.
\begin{enumerate}
\item There is a bijection
\[
\tsilt^+ J(Q,W) \leftrightarrow \{\bg\textrm{-vector seeds of }Q\}
\]
that sends $J(Q,W)$ to the initial~$\bg$-vector seed for~$Q$ and commutes with mutations.
In particular, it preserves their cones of $\bg$-vectors.
\item There is a bijection
\[
x_{(-)} : \tsilt^- J(Q,W) \leftrightarrow \{\bg\textrm{-vector seeds of }Q^{\op}\}
\]
that sends $\Sigma J(Q,W)$ to the initial~$\bg$-vector seed for~$Q^{\op}$ and commutes with mutations.
In particular, the cone of~$\bg$-vectors associated to~$U\in \tsilt^- J(Q,W)$ is the negative of the cone of $\bg$-vectors associated to $x_U$.
\end{enumerate}
\end{theorem}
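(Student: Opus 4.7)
The plan is to construct the bijection in part (1) inductively, starting from the canonical assignment $\Lambda = J(Q,W) \mapsto (Q, (\be_1, \ldots, \be_n))$, and then derive part (2) by applying part (1) to the opposite Jacobian algebra. The initial assignment is correct because the decomposition $\Lambda = \bigoplus_{i=1}^n P_i$ yields $\bg$-vectors equal to the standard basis vectors of $\bZ^n$, and the Gabriel quiver of $\End{\Lambda}(\Lambda)/\rad\End{\Lambda}(\Lambda)$ is $Q$. Both sides admit involutive mutations in $n$ directions: on the left by \cite[Corollary 4.8]{AdachiIyamaReiten}, which moreover shows that $\tsilt^+\Lambda$ is precisely the mutation class of $\Lambda$ inside $\tsilt \Lambda$; on the right by the mutation rule of $\bg$-vector seeds. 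Once silting mutation is shown to correspond to seed mutation, the extension of the initial assignment to a well-defined bijection on all of $\tsilt^+\Lambda$ is automatic.

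The heart of the argument is the compatibility of silting mutation with seed mutation, which has two components. First, the Gabriel quiver of $\End{\Lambda}(T)/\rad\End{\Lambda}(T)$ must transform by the combinatorial quiver mutation rule when $T$ is mutated; this requires that the endomorphism algebra of every silting object in $\tsilt^+\Lambda$ is again a Jacobian algebra of a mutation of $(Q,W)$, which is the categorified quiver-with-potential mutation of \cite{DerksenWeymanZelevinsky} combined with \cite[Corollary 3.5]{IrelliKellerLabardiniPlamondon}. Second, the $\bg$-vector of the indecomposable summand produced by the exchange triangle at direction $k$ must equal the tropical mutation $\bg'_k$; this follows from \cite[Theorem 6.3]{FuKeller} and \cite[Corollary 3.5]{IrelliKellerLabardiniPlamondon}, which compute the $\bg$-vector of the mutated summand from the minimal left $\add(T/T_k)$-approximation of $T_k$ and identify it with the combinatorial formula. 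Injectivity of the assignment then follows from the fact that a $2$-term silting object is determined up to isomorphism by the multiset of $\bg$-vectors of its indecomposable summands \cite{AdachiIyamaReiten}, while surjectivity is immediate by induction on the length of a mutation sequence.

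For part (2), I would apply part (1) to $\Lambda^{\op} = J(Q^{\op}, W^{\op})$, whose non-degeneracy and Jacobi-finiteness are inherited from those of $(Q, W)$. The contravariant functor $\RHom{\Lambda}(-, \Lambda)$, suitably shifted, induces a bijection between $\tsilt^-\Lambda$ and $\tsilt^+\Lambda^{\op}$ that sends $\Sigma \Lambda$ to $\Lambda^{\op}$, commutes with mutations, and negates $\bg$-vectors at the level of Grothendieck groups. Composing this with the bijection of part (1) applied to $\Lambda^{\op}$ yields the desired map $x_{(-)}$, carries $\Sigma \Lambda$ to the initial seed of $Q^{\op}$, and identifies the cone of $\bg$-vectors of $U \in \tsilt^-\Lambda$ with the negative of the cone of $\bg$-vectors attached to $x_U$.

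The main obstacle is the structural identification, at each step of mutation, of the endomorphism algebra of the new silting object as the Jacobian algebra of the corresponding mutation of $(Q, W)$. This requires the full strength of the cited categorification machinery and relies essentially on non-degeneracy (so that no $2$-cycles appear under iterated mutation) and Jacobi-finiteness (so that the relevant endomorphism algebras remain finite-dimensional). Once this structural information is in place, the matching of $\bg$-vectors is a direct computation from the defining exchange triangles.
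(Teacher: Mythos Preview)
Your proposal is correct and matches the paper's treatment: the paper does not give its own proof but simply attributes the result to \cite[Corollary 4.8]{AdachiIyamaReiten}, \cite[Theorem 6.3]{FuKeller} and \cite[Corollary 3.5]{IrelliKellerLabardiniPlamondon}, and your sketch is precisely an unpacking of how those three citations combine. One cosmetic slip: where you write ``the Gabriel quiver of $\End{\Lambda}(\Lambda)/\rad\End{\Lambda}(\Lambda)$'' (and similarly for $T$), the quotient by the radical is semisimple and so has no arrows---you mean the Gabriel quiver of $\End{\Lambda}(\Lambda)$ itself.
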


Theorem \ref{theo::twosiltbijcluster} immediately implies that the set
\[
\tsilt^{\pm} J(Q,W):=\tsilt^+ J(Q,W) \cup \tsilt^- J(Q,W)
\]
is independent of the choice of non-degenerate Jacobi-finite $W$.

\begin{corollary}\label{cor::differentpotentialpm}
Let $Q$ be a quiver without loops and $2$-cycles. 
Let $W$ and $W'$ be non-degenerate Jacobi-finite potentials of $Q$.
Then there is a bijection
\[
\tsilt^{\pm} J(Q,W) \leftrightarrow \tsilt^{\pm} J(Q,W')
\]
that sends $J(Q,W)$ to $J(Q,W')$ (resp., $\Sigma J(Q,W)$ to $\Sigma J(Q,W')$) and commutes with mutations.
In particular, it preserves their cones of $\bg$-vectors.
\end{corollary}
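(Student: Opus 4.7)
My plan is to derive the corollary directly by composing the two bijections supplied by Theorem~\ref{theo::twosiltbijcluster}, applied once to $(Q,W)$ and once to $(Q,W')$, and then gluing the results.

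First, I would apply part~(1) of Theorem~\ref{theo::twosiltbijcluster} to both $W$ and $W'$. This yields two bijections of $\tsilt^+ J(Q,W)$ and of $\tsilt^+ J(Q,W')$ with the \emph{same} set of $\bg$-vector seeds of $Q$, each sending the respective Jacobian algebra to the initial seed and commuting with mutations. Composing one with the inverse of the other produces a bijection $\tsilt^+ J(Q,W)\to\tsilt^+ J(Q,W')$ that sends $J(Q,W)$ to $J(Q,W')$, commutes with mutations, and preserves cones of $\bg$-vectors. Identically, part~(2) of the same theorem yields an analogous bijection $\tsilt^- J(Q,W)\to\tsilt^- J(Q,W')$ sending $\Sigma J(Q,W)$ to $\Sigma J(Q,W')$. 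For the latter, the two sign reversals built into the bijections $x_{(-)}$ (the cone of $U$ is the negative of the cone of $x_U$) cancel out under composition, so the resulting bijection again preserves cones of $\bg$-vectors in $K_0(\proj J(Q,W)) \cong \bZ^n \cong K_0(\proj J(Q,W'))$.

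Next, I would glue these two bijections into a single map on $\tsilt^{\pm} J(Q,W)=\tsilt^+ J(Q,W)\cup\tsilt^- J(Q,W)$. I expect the only non-trivial point, and therefore the main (essentially only) obstacle, to be the consistency check on the possibly non-empty intersection $\tsilt^+ J(Q,W) \cap \tsilt^- J(Q,W)$. I would handle this by invoking Theorem~\ref{theo::silting-gvector-fan}: a $2$-term silting complex over $J(Q,W')$ is uniquely determined (up to isomorphism) by its cone in the simplicial fan $\cF^{\bg}_{\tsilt}(J(Q,W'))$. Since both bijections constructed above preserve cones, they must send any $T$ in the intersection to the unique $2$-term silting complex over $J(Q,W')$ having the same cone as $T$, and therefore coincide on the intersection.

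Once the gluing is justified, all the remaining properties, namely that the resulting map is a bijection, that it sends $J(Q,W)\mapsto J(Q,W')$ and $\Sigma J(Q,W)\mapsto \Sigma J(Q,W')$, that it commutes with mutations, and that it preserves cones of $\bg$-vectors, follow immediately by transport of the corresponding assertions in Theorem~\ref{theo::twosiltbijcluster}.
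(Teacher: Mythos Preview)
Your proposal is correct and follows exactly the approach the paper intends: the paper states this corollary with no proof, presenting it as an immediate consequence of Theorem~\ref{theo::twosiltbijcluster} applied to both $W$ and $W'$. Your argument simply spells out the details the paper leaves implicit, and your handling of the overlap $\tsilt^+ \cap \tsilt^-$ via the injectivity of the cone map (Theorem~\ref{theo::silting-gvector-fan}) is the natural way to make the gluing rigorous.
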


\subsection{Mutation-finite quivers}

In the rest of this section, we fix a quiver $Q$ without loops and $2$-cycles.
We say that $Q$ is
{
\setlength{\leftmargini}{5mm}
\begin{itemize}
\item[-] \emph{mutation equivalent to} $Q'$ if $Q$ is obtained from $Q'$ by a sequence of mutations;
\item[-] \emph{mutation-finite} if there are only finitely many quivers mutation equivalent to $Q$.
\end{itemize}}
Felikson, Shapiro and Tumarkin \cite{FeliksonShapiroTumarkin} classified mutation-finite quivers, see also \cite[Section 12]{fomin2008}.

\begin{theorem}\cite[Theorem 6.1]{FeliksonShapiroTumarkin}\label{theo::classification of mutation finite}
A mutation-finite quiver $Q$ is one of the followings:
\begin{itemize}
\item an $m$-Kronecker quiver $K_m$ with $m \ge 3$;
\item a quiver defined from a triangulated surface (see \cite{fomin2008});
\item a quiver mutation equivalent to one of the quivers $E_i$, $\tilde{E}_i$, $E_i^{(1,1)}$, $X_6$ and $X_7$ for $i\in\{6,7,8\}$ as in Figure \ref{fig::exceptional quivers}.
\end{itemize}
\end{theorem}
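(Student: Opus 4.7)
The plan is to classify mutation-finite quivers by combining local combinatorial constraints with a block-decomposition analysis and a finite exceptional case check.

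The first step would be to establish sharp bounds on the arrow multiplicities in any quiver arising from mutations of $Q$. Concretely, I would prove that for any mutation-finite quiver $Q$ on at least three vertices, every induced subquiver on two vertices has at most two arrows between them. The argument is by contradiction: if $|b_{ij}|\geq 3$ in some quiver $Q'$ mutation-equivalent to $Q$, then picking any third vertex $k$ adjacent to $i$ or $j$ and applying a short sequence of mutations forces the multiplicities among $i,j,k$ to strictly grow at each step, contradicting mutation-finiteness. This immediately isolates the $K_m$ family ($m\geq 3$) as the only mutation-finite quivers beyond the bound, all of which live on two vertices.

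The second step is to invoke the block-decomposition framework of Fomin--Shapiro--Thurston. A quiver with $|b_{ij}|\leq 2$ everywhere is called block-decomposable if it is obtained by gluing along vertices a short list of elementary ``blocks'' of rank at most three. The key result to establish is that a connected such quiver is block-decomposable if and only if it is the adjacency quiver of an ideal triangulation of a bordered surface with marked points, and in that case $Q$ is automatically mutation-finite, since flips of triangulations realise mutations and the set of triangulations modulo the mapping class group of the surface is finite. This identifies the surface class in the statement.

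The third and hardest step is the classification of non-block-decomposable mutation-finite quivers. I would proceed by induction on the number of vertices, using the structural constraint that every proper induced subquiver of a mutation-finite quiver is itself mutation-finite. Together with the block criterion, this forces $Q$ to avoid certain forbidden subquivers; a finite enumeration then shows that the only connected possibilities not of surface type are the quivers mutation equivalent to $E_6, E_7, E_8, \tilde{E}_6, \tilde{E}_7, \tilde{E}_8, E_6^{(1,1)}, E_7^{(1,1)}, E_8^{(1,1)}, X_6, X_7$. The main obstacle is controlling the inductive step: proving that no new exceptional family appears at larger rank amounts to showing that mutation-finiteness together with non-block-decomposability forces, after at most one vertex removal, a subquiver of one of the already-classified low-rank exceptions. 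This rank-bound step is the heart of the Felikson--Shapiro--Tumarkin argument, and in practice requires a careful and largely computer-assisted case analysis to exclude hypothetical extensions of the exceptional families.
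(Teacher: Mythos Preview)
The paper does not prove this theorem at all: it is stated with attribution \cite[Theorem 6.1]{FeliksonShapiroTumarkin} and used as a black box, with no proof or sketch given. There is therefore nothing in the paper to compare your proposal against.

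That said, your outline is a faithful summary of the strategy actually used by Felikson--Shapiro--Tumarkin: the bound on arrow multiplicities for rank $\geq 3$, the identification of block-decomposable quivers with surface quivers via the Fomin--Shapiro--Thurston framework, and the inductive exhaustion of the non-block-decomposable mutation-finite quivers. One small correction: in your first step, the unbounded-growth argument for $|b_{ij}|\geq 3$ with a third vertex is more delicate than you suggest, since one must rule out cancellations; the actual proof controls this carefully. Also, the finiteness of triangulations modulo the mapping class group is not quite the right reason for mutation-finiteness in the surface case---rather, it is that flips preserve the underlying surface and hence the set of possible adjacency quivers is bounded combinatorially. But these are refinements; the architecture you describe is correct.
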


\begin{figure}[htp]
\begin{tabular}{c|c|c|c}
$E_6$ &
\begin{tikzpicture}[baseline=3mm,scale=0.7]
\node(0)at(0,0){$\bullet$}; \node(l1)at(-1,0){$\bullet$}; \node(l2)at(-2,0){$\bullet$}; \node(u1)at(0,1){$\bullet$};\node(r1)at(1,0){$\bullet$}; \node(r2)at(2,0){$\bullet$};
\draw[->](l2)--(l1); \draw[->](l1)--(0); \draw[->](u1)--(0); \draw[->](r2)--(r1); \draw[->](r1)--(0);
\end{tikzpicture}
& $E_6^{(1,1)}$ &
\begin{tikzpicture}[baseline=0mm,scale=0.7]
\node(l1)at(-0.5,0){$\bullet$}; \node(l2)at(-1.5,0){$\bullet$}; 
\node(u)at(0,1){$\bullet$}; \node(d)at(0,-1){$\bullet$};
\node(r1)at(0.5,0){$\bullet$}; \node(r2)at(1.5,0){$\bullet$}; 
\node(r3)at(2.5,0){$\bullet$}; \node(r4)at(3.5,0){$\bullet$};
\draw[->](l2)--(l1); \draw[->](u)--(l1); \draw[->](l1)--(d); \draw[->](u)--(r1); \draw[->](r1)--(d);
\draw[->](-0.06,-0.7)--(-0.06,0.7); \draw[->](0.06,-0.7)--(0.06,0.7);
\draw[->](r2)--(r1); \draw[->](r4)--(r3); \draw[->](u)--(r3); \draw[->](r3)--(d);
\end{tikzpicture}
\\\hline
$E_7$ &
\begin{tikzpicture}[baseline=3mm,scale=0.7]
\node(0)at(0,0){$\bullet$}; \node(l1)at(-1,0){$\bullet$}; \node(l2)at(-2,0){$\bullet$}; \node(u1)at(0,1){$\bullet$};
\node(r1)at(1,0){$\bullet$}; \node(r2)at(2,0){$\bullet$}; \node(r3)at(3,0){$\bullet$};
\draw[->](l2)--(l1); \draw[->](l1)--(0); \draw[->](u1)--(0); \draw[->](r2)--(r1); \draw[->](r1)--(0);
\draw[->](r3)--(r2);
\end{tikzpicture}
& $E_7^{(1,1)}$ &
\begin{tikzpicture}[baseline=0mm,scale=0.7]
\node(l1)at(-0.5,0){$\bullet$}; \node(l2)at(-1.5,0){$\bullet$}; \node(l3)at(-2.5,0){$\bullet$}; 
\node(u)at(0,1){$\bullet$}; \node(d)at(0,-1){$\bullet$};
\node(r1)at(0.5,0){$\bullet$}; \node(r2)at(1.5,0){$\bullet$}; 
\node(r3)at(2.5,0){$\bullet$}; \node(r4)at(3.5,0){$\bullet$};
\draw[->](l3)--(l2); \draw[->](l2)--(l1); \draw[->](u)--(l1); \draw[->](l1)--(d); \draw[->](u)--(r1); \draw[->](r1)--(d);
\draw[->](-0.06,-0.7)--(-0.06,0.7); \draw[->](0.06,-0.7)--(0.06,0.7);
\draw[->](r3)--(r2); \draw[->](r4)--(r3); \draw[->](u)--(r2); \draw[->](r2)--(d);
\end{tikzpicture}
\\\hline
$E_8$ &
\begin{tikzpicture}[baseline=3mm,scale=0.7]
\node(0)at(0,0){$\bullet$}; \node(l1)at(-1,0){$\bullet$}; \node(l2)at(-2,0){$\bullet$}; \node(u1)at(0,1){$\bullet$};
\node(r1)at(1,0){$\bullet$}; \node(r2)at(2,0){$\bullet$}; \node(r3)at(3,0){$\bullet$}; \node(r4)at(4,0){$\bullet$};
\draw[->](l2)--(l1); \draw[->](l1)--(0); \draw[->](u1)--(0); \draw[->](r2)--(r1); \draw[->](r1)--(0);
\draw[->](r3)--(r2); \draw[->](r4)--(r3);
\end{tikzpicture}
& $E_8^{(1,1)}$ &
\begin{tikzpicture}[baseline=0mm,scale=0.7]
\node(l1)at(-0.5,0){$\bullet$}; \node(l2)at(-1.5,0){$\bullet$};
\node(u)at(0,1){$\bullet$}; \node(d)at(0,-1){$\bullet$};
\node(r1)at(0.5,0){$\bullet$}; \node(r2)at(1.5,0){$\bullet$}; 
\node(r3)at(2.5,0){$\bullet$}; \node(r4)at(3.5,0){$\bullet$};
\node(r5)at(4.5,0){$\bullet$}; \node(r6)at(5.5,0){$\bullet$};
\draw[->](l2)--(l1); \draw[->](u)--(l1); \draw[->](l1)--(d); \draw[->](u)--(r1); \draw[->](r1)--(d);
\draw[->](-0.06,-0.7)--(-0.06,0.7); \draw[->](0.06,-0.7)--(0.06,0.7);
\draw[->](r3)--(r2); \draw[->](r4)--(r3); \draw[->](u)--(r2); \draw[->](r2)--(d);
\draw[->](r5)--(r4); \draw[->](r6)--(r5);
\end{tikzpicture}
\\\hline
$\widetilde{E}_6$ &
\begin{tikzpicture}[baseline=7mm,scale=0.7]
\node(0)at(0,0){$\bullet$}; \node(l1)at(-1,0){$\bullet$}; \node(l2)at(-2,0){$\bullet$}; \node(u1)at(0,1){$\bullet$};\node(r1)at(1,0){$\bullet$}; \node(r2)at(2,0){$\bullet$}; \node(u2)at(0,2){$\bullet$};
\draw[->](l2)--(l1); \draw[->](l1)--(0); \draw[->](u1)--(0); \draw[->](r2)--(r1); \draw[->](r1)--(0); 
\draw[->](u2)--(u1);
\end{tikzpicture}
& $X_6$ &
\begin{tikzpicture}[baseline=0mm,scale=0.8]
\node(0)at(0,0){$\bullet$}; \node(l)at(180:1){$\bullet$}; \node(lu)at(120:1){$\bullet$};
\node(r)at(0:1){$\bullet$}; \node(ru)at(60:1){$\bullet$}; \node(d)at(0,-1){$\bullet$};
\draw[->](0)--(l); \draw[->](lu)--(0); \draw[->](0)--(ru); \draw[->](r)--(0); \draw[->](d)--(0);
\draw[->](170:1)--(130:1); \draw[->](173:0.9)--(127:0.9);
\draw[->](50:1)--(10:1); \draw[->](53:0.9)--(7:0.9);
\end{tikzpicture}
\\\hline
$\widetilde{E}_7$ &
\begin{tikzpicture}[baseline=3mm,scale=0.7]
\node(0)at(0,0){$\bullet$}; \node(l1)at(-1,0){$\bullet$}; \node(l2)at(-2,0){$\bullet$}; \node(u1)at(0,1){$\bullet$};
\node(r1)at(1,0){$\bullet$}; \node(r2)at(2,0){$\bullet$}; \node(r3)at(3,0){$\bullet$}; \node(l3)at(-3,0){$\bullet$};
\draw[->](l2)--(l1); \draw[->](l1)--(0); \draw[->](u1)--(0); \draw[->](r2)--(r1); \draw[->](r1)--(0);
\draw[->](r3)--(r2); \draw[->](l3)--(l2);
\end{tikzpicture}
& $X_7$ &
\begin{tikzpicture}[baseline=0mm,scale=0.8]
\node(0)at(0,0){$\bullet$}; \node(l)at(180:1){$\bullet$}; \node(lu)at(120:1){$\bullet$};
\node(r)at(0:1){$\bullet$}; \node(ru)at(60:1){$\bullet$};
\node(dl)at(-120:1){$\bullet$}; \node(dr)at(-60:1){$\bullet$};
\draw[->](0)--(l); \draw[->](lu)--(0); \draw[->](0)--(ru); \draw[->](r)--(0); \draw[->](0)--(dr); \draw[->](dl)--(0);
\draw[->](170:1)--(130:1); \draw[->](173:0.9)--(127:0.9);
\draw[->](50:1)--(10:1); \draw[->](53:0.9)--(7:0.9);
\draw[->](-70:1)--(-110:1); \draw[->](-67:0.9)--(-113:0.9);
\end{tikzpicture}
\\\hline
$\widetilde{E}_8$ &
\begin{tikzpicture}[baseline=0mm,scale=0.6]
\node(0)at(0,0){$\bullet$}; \node(l1)at(-1,0){$\bullet$}; \node(l2)at(-2,0){$\bullet$}; \node(u1)at(0,1){$\bullet$};
\node(r1)at(1,0){$\bullet$}; \node(r2)at(2,0){$\bullet$}; \node(r3)at(3,0){$\bullet$}; \node(r4)at(4,0){$\bullet$};
\node(r5)at(5,0){$\bullet$};
\draw[->](l2)--(l1); \draw[->](l1)--(0); \draw[->](u1)--(0); \draw[->](r2)--(r1); \draw[->](r1)--(0);
\draw[->](r3)--(r2); \draw[->](r4)--(r3); \draw[->](r5)--(r4);
\end{tikzpicture}
\end{tabular}
   \caption{Exceptional quivers}
   \label{fig::exceptional quivers}
\end{figure}
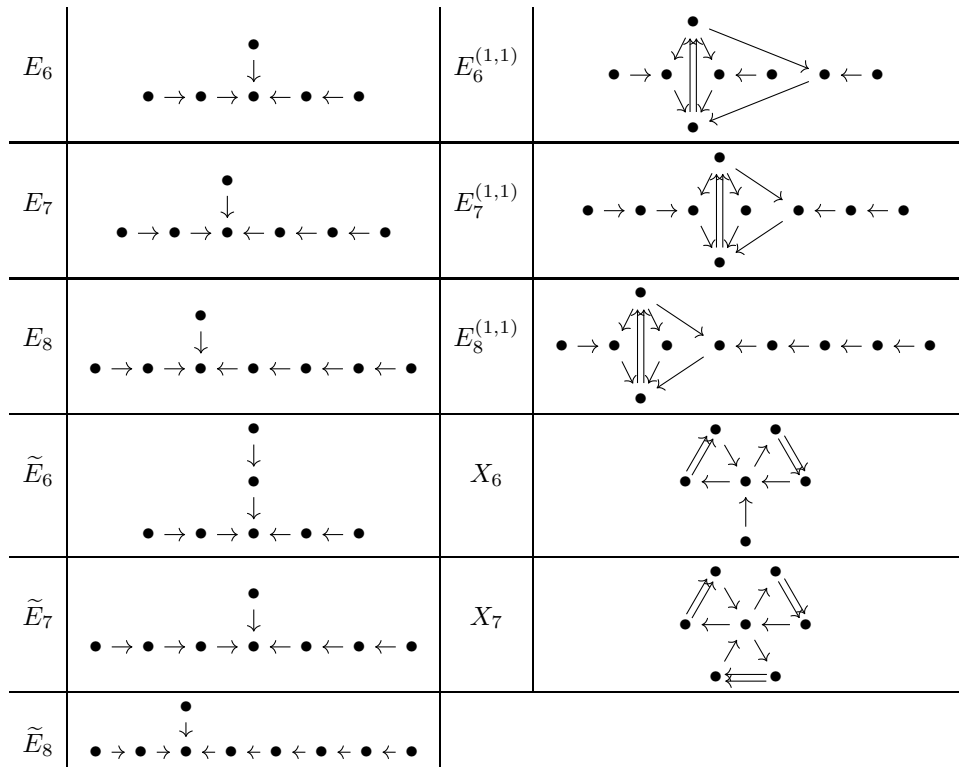

\begin{remark}
\begin{enumerate}
\item The quivers $K_m$, $E_i$ and $\tilde{E}_i$ are acyclic.
\item The Jacobian algebras $J$ and cluster categories associated with $E_i^{(1,1)}$, called \emph{tubular quivers}, were studied in \cite{BarotGeiss,BarotGeissJasso,GeissGonzalez}. 
In particular, it follows from \cite[Theorem 8.6]{BarotKussinLenzing} that any two silting objects of $K^{[-1,0]}(\proj J)$ are connected by a sequence of mutations, see \cite[Section 3]{BarotGeiss}.
\item Derksen and Owen \cite{DerksenOwen} found the quivers $X_6$ and $X_7$ as new mutation-finite quivers.
\end{enumerate}

\label{rem::mut-finquivers}
\end{remark}

Jacobian algebras associated with mutation-finite quivers satisfy some properties.

\begin{theorem}\cite{GeissLabardiniFragosoSchroer}\label{theo::jacobi-tame}
A quiver $Q$ is a mutation-finite one that is not mutation equivalent to one of the quivers $X_6$, $X_7$ and $K_m$ with $m \ge 3$ if and only if there is a non-degenerate Jacobi-finite potential $W$ of $Q$ such that $J(Q,W)$ is tame.
\end{theorem}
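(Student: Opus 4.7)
The plan is to proceed by case analysis using the Felikson--Shapiro--Tumarkin classification of mutation-finite quivers (Theorem~\ref{theo::classification of mutation finite}) as the backbone, handling the two directions of the equivalence separately. Since mutation of quivers with potential preserves non-degeneracy, Jacobi-finiteness and tameness (the latter because mutation induces an equivalence between suitable subcategories of the module categories of the two Jacobian algebras), it suffices in each case to treat a single representative of the mutation class.

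For the ``if'' direction, I would exhibit a non-degenerate Jacobi-finite potential $W$ whose Jacobian algebra is tame for each mutation class not in the excluded list. In the acyclic cases---Dynkin $E_6, E_7, E_8$ and extended Dynkin $\tilde{E}_6, \tilde{E}_7, \tilde{E}_8$---there are no oriented cycles, so the only potential is $W=0$; tameness (indeed representation-finiteness in the Dynkin case) of the resulting path algebras is classical. For quivers arising from a triangulation of a marked surface, I would take the Labardini-Fragoso potential, whose Jacobian algebra admits an explicit description of indecomposable modules in terms of strings and bands associated to curves on the surface, giving tameness. For the tubular quivers $E_6^{(1,1)}, E_7^{(1,1)}, E_8^{(1,1)}$, I would use the potentials constructed in \cite{BarotGeiss, BarotGeissJasso, GeissGonzalez}; their Jacobian algebras are of tubular type and in particular tame.

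For the ``only if'' direction, I would show that each of the quivers $K_m$ with $m\geq 3$, $X_6$ and $X_7$ fails to admit a tame non-degenerate Jacobi-finite potential. For $K_m$ the argument is immediate: since $K_m$ is acyclic, every potential is trivial, and the path algebra $kK_m$ is classically wild for $m\geq 3$. For $X_6$ and $X_7$, I would classify the non-degenerate potentials up to right-equivalence---a finite computation in each case, since there are only finitely many cycles of each relevant length---and then, for each family, either exhibit an explicit $2$-parameter family of non-isomorphic indecomposable modules of some fixed dimension vector, or reduce to a known wild algebra via a convolution with a wild subquotient at a suitable vertex.

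The main obstacles are the ``only if'' direction for $X_6$ and $X_7$ and the tameness verification in the tubular and surface cases. For the former, a complete classification of non-degenerate potentials requires careful bookkeeping of the cycle structure, and one must guarantee that no exotic potential escapes the wild-family argument; for the latter, tameness is most conveniently proved by constructing explicit $1$-parameter families of band modules, which in turn relies either on a geometric/surface model (in the surface case) or on the theory of tubular canonical algebras (in the $E_i^{(1,1)}$ case).
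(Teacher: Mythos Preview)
The paper does not give its own proof of this theorem: it is quoted directly from \cite{GeissLabardiniFragosoSchroer}, with only a remark afterwards pointing to \cite{Labardini1,Labardini2} for the surface potentials and \cite{GeissGonzalez,Ladkani} for the tubular ones. So there is nothing to compare against at the level of argument, only your outline versus the cited literature.

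That said, your outline has a genuine gap. You have reversed the labels ``if'' and ``only if'', but more importantly, in the direction you call ``only if'' you only treat the excluded mutation-finite classes $K_m$, $X_6$, $X_7$. The statement also asserts that $Q$ is mutation-finite whenever it admits a non-degenerate Jacobi-finite potential with tame Jacobian algebra; you say nothing about mutation-\emph{infinite} quivers. To close this, one argues (as the paper does later in the proof of Theorem~\ref{theo::classify g-tame J}) that any mutation-infinite $Q$ is mutation equivalent to some $Q'$ containing $K_m$, $m\ge 3$, as a full subquiver, so that $J(Q',W')/(e)\cong kK_m$ is wild for the corresponding idempotent $e$; since modules over $J(Q',W')/(e)$ form a full subcategory of modules over $J(Q',W')$, the latter is wild too, and one transports this back along the mutation sequence. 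Your invocation of ``mutation induces an equivalence between suitable subcategories'' to propagate tameness is the right idea but deserves a precise reference (e.g.\ the nearly Morita equivalence of \cite{DerksenWeymanZelevinsky} or the results behind \cite{GeissLabardiniFragosoSchroer}); this step is not entirely trivial.
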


Note that, for example, such potential $W$ of acyclic quivers in Theorem \ref{theo::jacobi-tame} is zero.
In other cases, such potentials $W$ were given in \cite{Labardini1,Labardini2} for quivers defined from triangulated surfaces; in \cite{GeissGonzalez,Ladkani} for tubular quivers.

\begin{proposition}\label{prop::mutationtransitivity}
Suppose that $Q$ is mutation-finite except for mutation equivalence classes of $X_6$ and $X_7$.
Let $W$ be a non-degenerate Jacobi-finite potential of $Q$.
If $Q$ is not defined from a triangulated surface with exactly one puncture, then any two silting objects in $K^{[-1,0]}(\proj J(Q,W))$ are connected by a sequence of mutations, that is, $\tsilt J(Q,W)=\tsilt^+ J(Q,W)$. Otherwise, $\tsilt J(Q,W)=\tsilt^+ J(Q,W)\sqcup\tsilt^- J(Q,W)$.
\end{proposition}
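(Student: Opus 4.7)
The plan is to combine the main density theorem (Theorem~\ref{theo::density}) with known density results for cluster $\bg$-vector fans. By Theorem~\ref{theo::jacobi-tame}, $Q$ admits a tame non-degenerate Jacobi-finite potential $W_0$, and by Corollary~\ref{cor::differentpotentialpm} the sets $\tsilt^\pm J(Q,W)$ are independent of $W$, so I may assume throughout that $J := J(Q,W_0)$ is tame. Theorem~\ref{theo::density} then gives $\overline{\bigcup\cF^{\bg}_{\tsilt}(J)}=\bR^n$, i.e.\ $J$ is $\bg$-tame.

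The key combinatorial step is a fan-comparison lemma: if $\cF_1\subseteq\cF_2$ are two essential simplicial polyhedral fans in $\bR^n$ whose maximal cones have dimension $n$, and if $\overline{\bigcup\cF_1}=\overline{\bigcup\cF_2}=\bR^n$, then $\cF_1$ and $\cF_2$ share the same maximal cones. The argument is elementary: if $C$ were a maximal cone of $\cF_2$ not in $\cF_1$, its relative interior would be a non-empty open subset of $\bR^n$ disjoint from $\bigcup\cF_1$ (any cone of $\cF_1$ meeting this interior would have to equal $C$ by maximality and simplicialness), hence disjoint from $\overline{\bigcup\cF_1}$, contradicting density.

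Next I would split into cases according to Theorem~\ref{theo::classification of mutation finite}. Suppose first that $Q$ does not arise from a once-punctured closed surface. For Dynkin and extended Dynkin types, the cluster fan $\bar\cF^{\bg}_{\cluster}(Q)$ is already known to be complete. For surface types other than the once-punctured closed case, Yurikusa has proved that $\bar\cF^{\bg}_{\cluster}(Q)=\bR^n$. For tubular types $E_i^{(1,1)}$, Remark~\ref{rem::mut-finquivers}(2) directly supplies the mutation-transitivity $\tsilt J=\tsilt^+ J$, bypassing the fan-comparison step. In all non-tubular sub-cases, Theorem~\ref{theo::twosiltbijcluster}(1) realises $\cF^{\bg}_{\cluster}(Q)$ as a subfan of $\cF^{\bg}_{\tsilt}(J)$ with full closure $\bR^n$, so the lemma forces $\tsilt^+ J=\tsilt J$.

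Finally, if $Q$ comes from a once-punctured closed surface, Yurikusa shows that both $\overline{\bigcup\cF^{\bg}_{\cluster}(Q)}$ and $\overline{\bigcup\cF^{\bg}_{\cluster}(Q^{\op})}$ are closed half-spaces. Via the negation in Theorem~\ref{theo::twosiltbijcluster}(2), the cones of $\tsilt^- J$ fill the half-space opposite to the one filled by $\tsilt^+ J$, so their union covers $\bR^n$. Applying the fan-comparison lemma to the subfan of $\cF^{\bg}_{\tsilt}(J)$ spanned by $\tsilt^+ J\cup\tsilt^- J$ yields $\tsilt J=\tsilt^+ J\cup\tsilt^- J$; disjointness is automatic because a full-dimensional simplicial cone cannot have its interior contained in the common boundary hyperplane, forcing each silting object to lie strictly on one side. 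The principal obstacles are correctly extracting these half-space descriptions from Yurikusa's geometric model in the once-punctured case, and handling the tubular case $E_i^{(1,1)}$, where the cluster fan density is not an input and one must invoke the mutation-transitivity in Remark~\ref{rem::mut-finquivers}(2) instead.
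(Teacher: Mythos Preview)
Your approach differs substantially from the paper's. The paper simply splits into three cases—acyclic, surface, tubular—and for each cites a known mutation-transitivity result directly (\cite[Proposition~3.5]{buan2006tilting}, \cite[Corollary~1.4]{Yurikusa}, and Remark~\ref{rem::mut-finquivers}(2) respectively), without invoking any density statements or Theorem~\ref{theo::density}.

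Your density-plus-fan-comparison route is attractive where it applies, but it has a genuine gap: you omit the quivers $K_m$ with $m\geq 3$. These are mutation-finite and not in the mutation class of $X_6$ or $X_7$, so they are in scope for the proposition; yet they are neither Dynkin nor extended Dynkin nor surface nor tubular, and by Example~\ref{ex::rank2} the cluster fan $\cF^{\bg}_{\cluster}(K_m)$ is \emph{not} dense—an open cone is missing. Your fan-comparison lemma therefore cannot be applied. Your opening reduction to a tame potential via Theorem~\ref{theo::jacobi-tame} already breaks down here, since that theorem explicitly excludes $K_m$ for $m\geq 3$. The conclusion $\tsilt J(K_m,0)=\tsilt^+ J(K_m,0)$ is nonetheless true, but it requires the direct acyclic mutation-transitivity result of Buan et~al., exactly as the paper invokes.

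A smaller point: your reduction to the tame potential $W_0$ via Corollary~\ref{cor::differentpotentialpm} only transfers $\tsilt^{\pm} J$ between potentials, not the full $\tsilt J$, so as written it does not justify replacing $W$ by $W_0$. This is harmless in the end, because your fan-comparison lemma really only needs density of the subfan $\cF_1$ (density of $\cF_2$ follows from $\cF_1\subseteq\cF_2$); hence you can run the comparison directly for the given $W$, with the density input coming entirely from the cluster side. In particular, Theorem~\ref{theo::density} is not actually needed for your argument.
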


\begin{proof}
By the assumption and Theorem \ref{theo::classification of mutation finite}, $Q$ is mutation equivalent to one of the following quivers: an acyclic quiver; a quiver defined from a triangulated surface; a tubular quiver.
For each case, the assertions follow from \cite[Proposition 3.5]{buan2006tilting}; \cite[Corollary 1.4]{Yurikusa}; Remark \ref{rem::mut-finquivers}(2).
\end{proof}

\begin{corollary}\label{cor::differentpotential}
Suppose that $Q$ is mutation-finite except for mutation equivalence classes of $X_6$ and $X_7$. 
Let $W$ and $W'$ be non-degenerate Jacobi-finite potentials of $Q$.
Then there is a bijection
\[
\tsilt J(Q,W) \leftrightarrow \tsilt J(Q,W')
\]
that sends $J(Q,W)$ to $J(Q,W')$ and commutes with mutations.
In particular, it preserves their cones of $\bg$-vectors.
\end{corollary}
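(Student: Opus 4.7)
The plan is to assemble the statement directly from the two preceding results, Proposition~\ref{prop::mutationtransitivity} and Corollary~\ref{cor::differentpotentialpm}, so there is essentially nothing new to prove beyond a careful identification of the sets involved.

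First I would observe that, for every non-degenerate Jacobi-finite potential $W$ of $Q$ under the hypothesis of the corollary, Proposition~\ref{prop::mutationtransitivity} yields the equality
\[
\tsilt J(Q,W) \;=\; \tsilt^+ J(Q,W) \,\cup\, \tsilt^- J(Q,W) \;=\; \tsilt^{\pm} J(Q,W).
\]
Indeed, if $Q$ does not arise from a triangulated surface with exactly one puncture, then $\tsilt J(Q,W) = \tsilt^+ J(Q,W)$; in that case $\Sigma J(Q,W)$ already belongs to $\tsilt^+ J(Q,W)$, so $\tsilt^- J(Q,W) \subseteq \tsilt^+ J(Q,W)$ by stability of $\tsilt^{\pm}$ under mutation, and the union collapses to $\tsilt^+ J(Q,W)$. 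In the once-punctured case the proposition gives the disjoint union $\tsilt^+ J(Q,W) \sqcup \tsilt^- J(Q,W)$ directly. The same identification holds for $W'$.

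Next, I would feed this into Corollary~\ref{cor::differentpotentialpm}: it supplies a bijection $\tsilt^{\pm} J(Q,W) \leftrightarrow \tsilt^{\pm} J(Q,W')$ that sends $J(Q,W)$ to $J(Q,W')$ (and $\Sigma J(Q,W)$ to $\Sigma J(Q,W')$), and that commutes with mutations. In view of the previous paragraph, this bijection is precisely a bijection
\[
\tsilt J(Q,W) \;\longleftrightarrow\; \tsilt J(Q,W'),
\]
as required. Preservation of cones of $\bg$-vectors is inherited from Corollary~\ref{cor::differentpotentialpm}, which itself traces back to Theorem~\ref{theo::twosiltbijcluster}: both sides of the bijection go to the same $\bg$-vector seed (of $Q$ for the $\tsilt^+$ piece, of $Q^{\op}$ for the $\tsilt^-$ piece), and the cone of $\bg$-vectors of a $2$-term silting object is determined by that seed.

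There is no substantial obstacle: the entire argument is bookkeeping, with Proposition~\ref{prop::mutationtransitivity} playing the crucial role of upgrading the $\pm$-restricted statement of Corollary~\ref{cor::differentpotentialpm} to a statement about the full poset $\tsilt J(Q,W)$. The only place where one has to be mildly attentive is the once-punctured surface case, where one must verify that the bijection of Corollary~\ref{cor::differentpotentialpm} respects the decomposition $\tsilt^+ \sqcup \tsilt^-$; this is built into that corollary, as it sends $J(Q,W) \mapsto J(Q,W')$ and $\Sigma J(Q,W) \mapsto \Sigma J(Q,W')$ and commutes with mutations.
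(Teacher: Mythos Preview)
Your proposal is correct and follows exactly the same approach as the paper: the paper's proof consists of the single sentence ``The assertion follows from Corollary~\ref{cor::differentpotentialpm} and Proposition~\ref{prop::mutationtransitivity},'' and your write-up simply unpacks how these two ingredients combine. The extra care you take in the once-punctured case is fine and does not deviate from the intended argument.
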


\begin{proof}
The assertion follows from Corollary \ref{cor::differentpotentialpm} and Proposition \ref{prop::mutationtransitivity}.
\end{proof}

\subsection{$\bg$-tame Jacobian algebras}
In order to apply the categorification of cluster algebras to the study of the closure of their~$\bg$-vector fans, we need to study~$\bg$-vector fans for finite-dimensional Jacobian algebras.

\begin{lemma}\label{lem::g-tame mutation-inv}
If $(Q,W)$ is a quiver with potential such that~$J(Q,W)$ is~$\bg$-tame, then so is $\mu_k(Q,W)$ (if it is defined).
\end{lemma}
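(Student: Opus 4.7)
The plan is to exhibit a piecewise linear homeomorphism of~$\bR^n$ that identifies the $\bg$-vector fans of~$J(Q,W)$ and~$J(\mu_k(Q,W))$; since homeomorphisms preserve closures, $\bg$-tameness would be transferred for free.

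Set $\Lambda := J(Q,W)$ and $\Lambda' := J(\mu_k(Q,W))$ (both are finite-dimensional, since $J(Q,W)$ is under the hypothesis and Jacobi-finiteness is preserved under QP-mutation). I would first invoke the categorical incarnation of QP-mutation: the left silting mutation of~$\Lambda$ at the indecomposable projective summand~$P_k$ produces a silting object $T \in K^b(\proj\Lambda)$ with $\End{K^b(\proj\Lambda)}(T) \cong \Lambda'$, and $T$ induces a triangle equivalence $\Phi : K^b(\proj\Lambda) \iso K^b(\proj\Lambda')$.

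Under $\Phi$, the $2$-term presilting objects of $\Lambda'$ correspond bijectively to the presilting objects of $K^b(\proj\Lambda)$ lying in the interval $\add T * \add \Sigma T$; their $\Lambda'$-$\bg$-vectors coincide with the coordinate vectors in the basis of $K_0(K^b(\proj\Lambda))$ provided by the indecomposable summands of~$T$. Translating these back to the standard basis $[P_1], \ldots, [P_n]$ of $K_0(\proj\Lambda)$ amounts to the well-known piecewise linear formula for silting $\bg$-vector mutation at vertex~$k$. This defines a piecewise linear map $\phi_k : \bR^n \to \bR^n$ sending $\cF^{\bg}_{\tsilt}(\Lambda')$ onto $\cF^{\bg}_{\tsilt}(\Lambda)$; it admits a piecewise linear inverse obtained by mutating back at vertex~$k$ in $\Lambda'$, and is therefore a homeomorphism of $\bR^n$. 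Consequently $\bar\cF^{\bg}_{\tsilt}(\Lambda) = \bR^n$ if and only if $\bar\cF^{\bg}_{\tsilt}(\Lambda') = \bR^n$, yielding the lemma.

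The main technical point I expect to work through is the explicit verification that $\phi_k$ is a piecewise linear homeomorphism which truly matches the full fans on the nose (not merely the subfans reachable from the initial silting objects by iterated mutation). This is folklore in silting theory and follows from the general theory of silting mutation developed in \cite{AdachiIyamaReiten}, but a careful tracking of signs in the mutation formula is required; in particular, one must confirm the formula applies in the stated generality, where $(Q,W)$ is only assumed to admit a QP-mutation at~$k$ without any non-degeneracy hypothesis.
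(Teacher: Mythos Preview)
Your overall strategy---exhibit a piecewise linear homeomorphism of~$\bR^n$ carrying~$\cF^{\bg}_{\tsilt}(\Lambda)$ to~$\cF^{\bg}_{\tsilt}(\Lambda')$---is exactly the paper's one-line proof, and the conclusion follows immediately once that homeomorphism is in hand.

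There is, however, a genuine error in the way you construct it. A $2$-term silting complex~$T$ is \emph{not} a tilting complex in general: the space~$\Hom{K^b(\proj\Lambda)}(T,\Sigma^{-1}T)$ need not vanish, so~$T$ does not induce a triangle equivalence~$K^b(\proj\Lambda)\iso K^b(\proj\Lambda')$. A concrete counterexample: take~$Q$ the oriented $3$-cycle with~$W$ the cycle itself; then~$J(Q,W)$ is the self-injective Nakayama algebra of infinite global dimension, while~$J(\mu_k(Q,W))\cong kA_3$ is hereditary, so~$K^b(\proj\Lambda)$ and~$K^b(\proj\Lambda')$ cannot be triangle equivalent. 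Your subsequent claim that~$\Phi$ identifies~$\tsilt\Lambda'$ with the silting objects in~$\add T * \add\Sigma T$ therefore has no foundation as stated.

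The repair is to replace this nonexistent equivalence with a correct source for the bijection~$\tsilt\Lambda\leftrightarrow\tsilt\Lambda'$ and the piecewise linear formula. Two standard routes work: (i)~pass to the Ginzburg dg algebras~$\Gamma_{Q,W}$ and~$\Gamma_{\mu_k(Q,W)}$, which \emph{are} derived equivalent by Keller--Yang, and identify $2$-term silting over the Jacobian algebras with cluster-tilting objects in the common (Amiot) cluster category; or (ii)~invoke Jasso's $\tau$-tilting reduction, which for a $2$-term silting~$T$ gives a bijection between~$\tsilt\Lambda$ and~$\tsilt\End{}(T)$ with explicitly piecewise linear effect on $\bg$-vectors. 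Either route yields the homeomorphism~$\phi_k$ you want, and addresses your own worry about matching the \emph{full} fans rather than just the mutation-reachable parts. The paper simply asserts the piecewise linearity of $\bg$-vector mutation as known and moves on.
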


\begin{proof}
The transformation of mutations for $\bg$-vectors is piecewise linear.
Thus it preserves $\bg$-tameness.
\end{proof}

We note that the proof of Lemma~\ref{lem::g-tame mutation-inv} applies even if the Jacobian algebra~$J(Q,W)$ is infinite-dimensional.

We now give a near classification of quivers admitting a potential such that the corresponding Jacobian algebra is~$\bg$-tame.

\begin{theorem}\label{theo::classify g-tame J}
Suppose that $Q$ is not mutation equivalent to one of the quivers $X_6$, $X_7$ and $K_m$ with $m \ge 3$. 
Then $Q$ admits a non-degenerate potential~$W$ such that~$J(Q,W)$ is~$\bg$-tame if and only if it is mutation-finite.
Moreover, a Jacobian algebra for the quiver $K_m$ is not~$\bg$-tame for $m \ge 3$.
\end{theorem}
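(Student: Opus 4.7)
The plan has three parts, corresponding to the three assertions packaged in the statement.

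For the ``if'' direction of the main equivalence, the argument is a direct combination of earlier results. If $Q$ is mutation-finite and not mutation equivalent to $X_6$, $X_7$, or $K_m$ with $m\ge 3$, Theorem~\ref{theo::jacobi-tame} provides a non-degenerate Jacobi-finite potential $W$ with $J(Q,W)$ tame, and then Theorem~\ref{theo::density} gives that $J(Q,W)$ is $\bg$-tame. For the ``Moreover'' statement, since $K_m$ is acyclic, every potential on it is (equivalent to) zero, so $J(K_m, W) = kK_m$. By Theorem~\ref{theo::twosiltbijcluster}, the fan $\cF^{\bg}_{\tsilt}(kK_m)$ is the union of $\cF^{\bg}_{\cluster}(K_m)$ and of the image under $-\mathrm{id}$ of $\cF^{\bg}_{\cluster}(K_m^{\op})$; by Example~\ref{ex::rank2}, for $m\ge 3$ this union misses the interior of the cone spanned by the rays $r_+$ and $r_-$ (and, symmetrically, one in the opposite half-plane), so $\bar\cF^{\bg}_{\tsilt}(kK_m)\neq\bR^2$ and $kK_m$ is not $\bg$-tame.

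For the ``only if'' direction I would argue by contradiction. Suppose $Q$ is mutation-infinite (and, by hypothesis, not in the exceptional classes) but admits a non-degenerate potential $W$ with $J(Q,W)$ $\bg$-tame. Since every rank-$2$ quiver is some $K_m$ and all $K_m$ are mutation-finite, $Q$ has at least three vertices. Invoking the classical dichotomy for connected quivers of rank $\ge 3$ (cf.~\cite{FeliksonShapiroTumarkin}), the mutation-infiniteness of $Q$ means one can apply a sequence of mutations, simultaneously transporting $W$ via~\cite{DerksenWeymanZelevinsky}, to reach a pair $(Q',W')$ in which $Q'$ has an arrow of multiplicity $m'\ge 3$ between two vertices $i$ and $j$. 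Lemma~\ref{lem::g-tame mutation-inv} preserves $\bg$-tameness under this sequence, so $J(Q',W')$ is still $\bg$-tame. Setting $e=\sum_{k\neq i,j} e_k$, Proposition~\ref{prop::reduction by idemp} then gives that $J(Q',W')/(e)$ is $\bg$-tame. Since $Q'$ has no $2$-cycles, the full subquiver on $\{i,j\}$ is $K_{m'}$, which is acyclic, so the induced potential is trivial and the quotient identifies with $kK_{m'}$. By the ``Moreover'' statement this is not $\bg$-tame, a contradiction.

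The most delicate point is the identification of $J(Q',W')/(e)$ with the path algebra $kK_{m'}$ rather than with a proper quotient: one must verify that each cyclic derivative $\partial_a W'$ (for $a$ an arrow between $i$ and $j$) becomes zero modulo $(e)$, which holds because every summand is a path from $j$ to $i$ in $Q'$ that must pass through some vertex $k\notin\{i,j\}$, since the two-vertex acyclic subquiver $K_{m'}$ admits no nontrivial path from $j$ to $i$. A secondary technicality is merely to cite the rank-$\ge 3$ arrow-multiplicity dichotomy for mutation-finite quivers, which is classical and compatible with Theorem~\ref{theo::classification of mutation finite}.
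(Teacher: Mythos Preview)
Your proof is correct and follows essentially the same approach as the paper's: the ``if'' direction combines Theorems~\ref{theo::jacobi-tame} and~\ref{theo::density}; the ``only if'' direction mutates to a quiver $Q'$ containing $K_{m'}$ with $m'\ge 3$ as a full subquiver, then uses Proposition~\ref{prop::reduction by idemp} and the $K_m$ case to derive a contradiction, transporting $\bg$-tameness via Lemma~\ref{lem::g-tame mutation-inv}. Your treatment is in fact slightly more careful than the paper's in two places: you spell out why $J(Q',W')/(e)\cong kK_{m'}$ (the paper simply asserts this), and you make explicit the passage from the cluster fan of Example~\ref{ex::rank2} to the silting fan of $kK_m$ via Theorem~\ref{theo::twosiltbijcluster} (though to fully close this you should also note that $\tsilt kK_m = \tsilt^+ kK_m$ for hereditary $kK_m$, e.g.\ by Proposition~\ref{prop::mutationtransitivity}).
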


\begin{proof}
For $Q=K_m$, the assertion follows from Example \ref{ex::rank2}.
Suppose that $Q$ is not mutation equivalent to one of the quivers $X_6$, $X_7$ and $K_m$. 
When $Q$ is mutation-finite, then~$Q$ admits a potential~$W$ such that~$J(Q,W)$ is~$\bg$-tame by Theorems \ref{theo::density} and \ref{theo::jacobi-tame}.
When $Q$ is not mutation-finite, $Q$ is mutation equivalent to a quiver $Q'$ with full subquiver $K_m$ for $m \ge 3$.
Namely, there is an idempotent $e$ of $J(Q',W)$ for any non-degenerate potential $W$ of $Q'$ such that $J(Q',W)/(e) \simeq J(K_m,0) = kK_m$.
Proposition \ref{prop::reduction by idemp} implies that $J(Q',W)$ is not~$\bg$-tame.  Mutating back to~$Q$, we obtain the result by Lemma \ref{lem::g-tame mutation-inv}.
\end{proof}

\begin{corollary}
Suppose that $Q$ is not mutation equivalent to one of the quivers $X_6$ and $X_7$. 
If $J(Q,W)$ is~$\bg$-tame for some non-degenerate potential~$W$, then it is~$\bg$-tame for any non-degenerate Jacobi-finite potential.
\end{corollary}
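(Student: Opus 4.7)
The plan is to first show that the hypothesis forces $Q$ to be mutation-finite, and then invoke the potential-independence of $\bg$-vector fans provided by Corollary \ref{cor::differentpotential}.

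Since $\bg$-tameness is only defined for finite-dimensional algebras, the hypothesis that $J(Q,W)$ is $\bg$-tame implies that $W$ is Jacobi-finite. The first step is to argue by contradiction that $Q$ must be mutation-finite. If it were not, then a standard fact (used already in the proof of Theorem \ref{theo::classify g-tame J}) provides a quiver $Q'$ mutation equivalent to $Q$ admitting $K_m$ for some $m \geq 3$ as a full subquiver. Letting $(Q', W')$ be the quiver with potential obtained by mutating $(Q,W)$ along the corresponding sequence of mutations, $W'$ is non-degenerate and Jacobi-finite since both properties are preserved by mutation. The full-subquiver structure yields an idempotent $e$ of $J(Q',W')$ with $J(Q',W')/(e) \cong kK_m$ (the restricted potential vanishes because $K_m$ has no oriented cycles). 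By Example \ref{ex::rank2}, $kK_m$ is not $\bg$-tame for $m \geq 3$, so by Proposition \ref{prop::reduction by idemp}, $J(Q',W')$ is not $\bg$-tame either. On the other hand, iterated application of Lemma \ref{lem::g-tame mutation-inv} shows that $\bg$-tameness is invariant under mutation of $(Q,W)$, so $J(Q',W')$ would be $\bg$-tame, a contradiction.

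Once $Q$ is known to be mutation-finite and assumed not mutation equivalent to $X_6$ or $X_7$, Corollary \ref{cor::differentpotential} applies directly: for any non-degenerate Jacobi-finite potential $W''$ of $Q$, there is a bijection $\tsilt J(Q,W) \leftrightarrow \tsilt J(Q,W'')$ commuting with mutations and preserving cones of $\bg$-vectors. Consequently the two $\bg$-vector fans coincide as subsets of $\bR^n$, so one is dense if and only if the other is, finishing the proof. The main obstacle is exclusively contained in the first step, namely the input that any mutation-infinite quiver exposes a $K_m$ full subquiver with $m \geq 3$ after some sequence of mutations; this is the same fact implicitly invoked in Theorem \ref{theo::classify g-tame J}, and once it is granted the rest is a direct combination of results already established earlier in the paper.
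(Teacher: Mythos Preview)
Your proof is correct and follows the same two-step structure as the paper's proof: first establish that $Q$ is mutation-finite, then invoke Corollary~\ref{cor::differentpotential}. The only difference is cosmetic: the paper simply cites Theorem~\ref{theo::classify g-tame J} for the first step, whereas you unpack its ``only if'' direction (the $K_m$-subquiver argument via Proposition~\ref{prop::reduction by idemp} and Lemma~\ref{lem::g-tame mutation-inv}) explicitly.
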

\begin{proof}
If~$J(Q,W)$ is~$\bg$-tame, then by Theorem \ref{theo::classify g-tame J}, $Q$ is mutation-finite. The assertion then follows from Corollary \ref{cor::differentpotential}.
\end{proof}

\subsection{Cluster algebras with dense~$\bg$-vector fans}

We now wish to apply our knowledge of~$\bg$-vector fans for Jacobian algebras to those of cluster algebras.  To this end, the following definition will be useful.

\begin{definition}
We say that $Q$ is \emph{cluster-$\bg$-dense} if $\bar\cF^{\bg}_{\cluster}(Q)=\bR^n$.
We say that $Q$ is \emph{half cluster-$\bg$-dense} if $\bar\cF^{\bg}_{\cluster}(Q)$ and~$\bar\cF^{\bg}_{\cluster}(Q^{op})$ are closed half-spaces in $\bR^n$.
\end{definition}

In the same as Lemme \ref{lem::g-tame mutation-inv}, (half) cluster-$\bg$-denseness is mutation-invariant.
In particular, it gives a new class of cluster algebras, called \emph{(half) $\bg$-dense cluster algebras}.
It is clear that the associated finite-dimensional Jacobian algebras are $\bg$-tame.

\begin{lemma}\label{lemm::clustergDenseImpliesJacobiDense}
If $Q$ is (half) cluster-$\bg$-dense, then for any non-degenerate Jacobi-finite potential~$W$, the algebra~$J(Q,W)$ is~$\bg$-tame.
\end{lemma}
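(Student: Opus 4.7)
The plan is to apply Theorem~\ref{theo::twosiltbijcluster}, which supplies cone-preserving bijections $\tsilt^+ J(Q,W) \leftrightarrow \{\bg\text{-vector seeds of } Q\}$ and $\tsilt^- J(Q,W) \leftrightarrow \{\bg\text{-vector seeds of } Q^{\op}\}$ (the latter negating cones). Since $\tsilt^+ J(Q,W) \cup \tsilt^- J(Q,W) \subseteq \tsilt J(Q,W)$, these bijections yield the inclusion
\[
\bar\cF^{\bg}_{\cluster}(Q) \;\cup\; \bigl(-\bar\cF^{\bg}_{\cluster}(Q^{\op})\bigr) \;\subseteq\; \bar\cF^{\bg}_{\tsilt}(J(Q,W))
\]
inside $\bR^n$.

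If $Q$ is cluster-$\bg$-dense, then $\bar\cF^{\bg}_{\cluster}(Q) = \bR^n$, so the inclusion already forces $\bar\cF^{\bg}_{\tsilt}(J(Q,W)) = \bR^n$; that is, $J(Q,W)$ is $\bg$-tame.

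If $Q$ is half cluster-$\bg$-dense, then both $\bar\cF^{\bg}_{\cluster}(Q)$ and $\bar\cF^{\bg}_{\cluster}(Q^{\op})$ are closed half-spaces in $\bR^n$. Each contains the initial seed cone $C(\be_1, \ldots, \be_n)$, so $\bar\cF^{\bg}_{\cluster}(Q)$ is a half-space containing the positive orthant while $-\bar\cF^{\bg}_{\cluster}(Q^{\op})$ is a half-space containing the negative orthant. The remaining step is to check that these two half-spaces share a common bounding hyperplane and lie on opposite sides of it, so that
\[
\bar\cF^{\bg}_{\cluster}(Q) \;\cup\; \bigl(-\bar\cF^{\bg}_{\cluster}(Q^{\op})\bigr) = \bR^n,
\]
whence $\bar\cF^{\bg}_{\tsilt}(J(Q,W)) = \bR^n$ and $J(Q,W)$ is again $\bg$-tame.

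The main obstacle is verifying this complementarity in the half case: two distinct closed half-spaces cover $\bR^n$ only when they share a common bounding hyperplane and lie on opposite sides of it. I expect this to follow from the fact that both half-spaces sit inside the common simplicial fan $\cF^{\bg}_{\tsilt}(J(Q,W))$, whose maximal cones meet only along lower-dimensional faces; combined with the presence of the positive orthant in $\bar\cF^{\bg}_{\cluster}(Q)$ and of the negative orthant in $-\bar\cF^{\bg}_{\cluster}(Q^{\op})$, this forces the two bounding hyperplanes to coincide (the separating hyperplane appearing in Theorem~\ref{theo::scatteringIntro}).
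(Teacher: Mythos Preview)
Your approach coincides with the paper's: its entire proof reads ``The assertion follows from Theorem~\ref{theo::twosiltbijcluster}.'' You have correctly unpacked this for the cluster-$\bg$-dense case, and in fact you go further than the paper by isolating a genuine subtlety in the half case that the one-line proof glosses over.

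Your sketch for the half case is on the right track but stops short of a proof. Here is how to complete it. The sets $\tsilt^+ J(Q,W)$ and $\tsilt^- J(Q,W)$ are each closed under mutation, so if they meet they coincide; but then $\bar\cF^{\bg}_{\cluster}(Q)=-\bar\cF^{\bg}_{\cluster}(Q^{\op})$ would be a half-space containing both the positive and the negative orthant, which is impossible. Hence $\tsilt^+\cap\tsilt^-=\emptyset$, and since distinct maximal silting cones have disjoint interiors, the open sets $U_1$ and $U_2$ formed by the interiors of the maximal cones from $\tsilt^+$ and $\tsilt^-$ are disjoint. Each $U_i$ is dense in its closed half-space $H_i$, hence is an open dense subset of the open half-space $\operatorname{int}(H_i)$. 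If $\operatorname{int}(H_1)\cap\operatorname{int}(H_2)$ were nonempty, it would be an open set in which both $U_1$ and $U_2$ are dense and open, yet disjoint --- impossible. Thus $\operatorname{int}(H_1)\cap\operatorname{int}(H_2)=\emptyset$, and two disjoint open half-spaces in $\bR^n$ are necessarily bounded by the same hyperplane and lie on opposite sides of it, giving $H_1\cup H_2=\bR^n$.
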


\begin{proof}
The assertion follows from Theorem \ref{theo::twosiltbijcluster}.
\end{proof}

We give a near classification of (half) cluster-$\bg$-dense quivers, the only open cases being the mutation equivalence classes of $X_6$ and $X_7$.
It is a stronger statement of Theorem \ref{theo::classify g-tame J}.

\begin{theorem}\label{theo::classify g-dense cluster}
Suppose that $Q$ is not mutation equivalent to one of the quivers $X_6$, $X_7$ and $K_m$ with $m \ge 3$.
Then $Q$ is cluster-$\bg$-dense or half cluster-$\bg$-dense if and only if it is mutation-finite.  In this case, $Q$ is half cluster-$\bg$-dense if and only if it is defined from a triangulated surface with exactly one puncture.
On the other hand, $K_m$ is not (half) cluster-$\bg$-dense for $m \ge 3$.
\end{theorem}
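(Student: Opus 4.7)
The plan is to derive Theorem \ref{theo::classify g-dense cluster} by transporting the density result of Theorem \ref{theo::density} from the $2$-term silting fan of a Jacobian algebra to the cluster fan via the categorification in Theorem \ref{theo::twosiltbijcluster}, then using the geometric classification of mutation-finite quivers together with Theorem \ref{theo::classify g-tame J} for the converse. The case $Q=K_m$ with $m\geq 3$ is dispatched immediately from Example \ref{ex::rank2}: there the fan has a two-dimensional complement bounded by the rays $r_+\neq r_-$, so neither $\bar\cF^{\bg}_{\cluster}(K_m)$ nor $\bar\cF^{\bg}_{\cluster}(K_m^{\op})$ is either all of $\bR^2$ or a half-space.

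For the forward direction, assume $Q$ is mutation-finite and not one of the excluded quivers. By Theorem \ref{theo::jacobi-tame} there is a non-degenerate Jacobi-finite potential $W$ with $J(Q,W)$ tame, and then Theorem \ref{theo::density} gives that $J(Q,W)$ is $\bg$-tame, i.e.\ $\bar\cF^{\bg}_{\tsilt}(J(Q,W))=\bR^n$. By Proposition \ref{prop::mutationtransitivity} there are two sub-cases. If $Q$ is not defined by a triangulation of a closed surface with exactly one puncture, then $\tsilt J(Q,W)=\tsilt^+ J(Q,W)$, so Theorem \ref{theo::twosiltbijcluster}(1) identifies the maximal cones of $\cF^{\bg}_{\tsilt}(J(Q,W))$ with those of $\cF^{\bg}_{\cluster}(Q)$; density of the former then yields cluster-$\bg$-denseness of $Q$.

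In the remaining sub-case $Q$ comes from a once-punctured closed surface and $\tsilt J(Q,W)=\tsilt^+ J(Q,W)\sqcup\tsilt^- J(Q,W)$. By Theorem \ref{theo::twosiltbijcluster} the union of cones coming from $\tsilt^+$ equals $\bar\cF^{\bg}_{\cluster}(Q)$ while the union from $\tsilt^-$ equals $-\bar\cF^{\bg}_{\cluster}(Q^{\op})$; together they exhaust $\bR^n$. The main obstacle here is to show that the two pieces share only a separating hyperplane, so that each is a closed half-space; I would invoke the geometric model for surface-type cluster algebras (as in \cite{Yurikusa}), where the once-punctured closed case is known to produce exactly such a separating wall corresponding to the unique tagging change, and argue using the brick/band dichotomy in Theorem \ref{theo::genericDecompositionForTame} that no ray from $\tsilt^+$ can lie in the interior of a $\tsilt^-$-cone. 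Applied symmetrically to $Q^{\op}$ (which is again of the same surface type), this gives half cluster-$\bg$-denseness.

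For the converse, suppose $Q$ is cluster-$\bg$-dense or half cluster-$\bg$-dense. Lemma \ref{lemm::clustergDenseImpliesJacobiDense} shows that for any non-degenerate Jacobi-finite potential $W$ the algebra $J(Q,W)$ is $\bg$-tame: in the cluster-$\bg$-dense case directly, and in the half-case by combining the half-space closures of the cluster fans of $Q$ and $Q^{\op}$ through Theorem \ref{theo::twosiltbijcluster}(2). Since $Q$ is not mutation equivalent to $X_6$, $X_7$, or $K_m$ with $m\geq 3$, Theorem \ref{theo::classify g-tame J} then forces $Q$ to be mutation-finite, completing the equivalence. The mutually exclusive nature of the two cases (cluster-$\bg$-dense vs.\ half cluster-$\bg$-dense) is automatic since a dense subset of $\bR^n$ cannot also have closure equal to a proper half-space, and the characterization of when the half-case occurs follows from the case analysis above. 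The one technical point I expect to need the most care is the identification of the separating hyperplane in the once-punctured case and the verification that the closure contains no extra directions beyond one side of it, which ultimately relies on the precise form of Theorem \ref{theo::genericDecompositionForTame} for surface-type Jacobian algebras.
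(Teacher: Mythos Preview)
Your forward direction (mutation-finite implies (half) cluster-$\bg$-dense) matches the paper's approach: for quivers not arising from once-punctured closed surfaces you correctly combine Theorems~\ref{theo::jacobi-tame} and~\ref{theo::density}, Proposition~\ref{prop::mutationtransitivity}, and Theorem~\ref{theo::twosiltbijcluster}(1); for the once-punctured case, both you and the paper ultimately rely on \cite[Theorem 1.2]{Yurikusa}, though the paper simply cites it rather than attempting the separating-hyperplane argument you sketch.

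Your converse direction, however, has a genuine gap. You argue via Lemma~\ref{lemm::clustergDenseImpliesJacobiDense} and Theorem~\ref{theo::classify g-tame J}: if $Q$ is (half) cluster-$\bg$-dense, then every non-degenerate Jacobi-finite $W$ yields a $\bg$-tame $J(Q,W)$, whence Theorem~\ref{theo::classify g-tame J} forces $Q$ to be mutation-finite. But to invoke Theorem~\ref{theo::classify g-tame J} in this direction you must exhibit at least one non-degenerate Jacobi-finite potential on $Q$, and this is not established for an arbitrary (in particular, mutation-infinite) quiver. If no such $W$ exists, Lemma~\ref{lemm::clustergDenseImpliesJacobiDense} is vacuous and the chain breaks. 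The paper avoids this by staying on the cluster side: it appeals to \cite[Theorem 33]{Muller}, which shows directly that a quiver containing $K_m$ ($m\ge 3$) as a full subquiver is not (half) cluster-$\bg$-dense, and then argues by mutation as in the proof of Theorem~\ref{theo::classify g-tame J}. This route never requires a finite-dimensional Jacobian algebra for mutation-infinite $Q$.
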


\begin{proof}
The assertions follow from Proposition \ref{prop::mutationtransitivity}, Theorems \ref{theo::twosiltbijcluster}(1) and \ref{theo::classify g-tame J} if $Q$ is mutation-finite except for one defined from a triangulated surface with exactly one puncture.
In which case, it was proved in \cite[Theorem 1.2]{Yurikusa}.

On the other hand, \cite[Theorem 33]{Muller} implies that a quiver with full subquiver $K_m$ for $m \ge 3$ is not (half) cluster-$\bg$-dense.
Thus it is given by the same way as the proof of Theorem \ref{theo::classify g-tame J} that non-mutation-finite quivers are not (half) cluster-$\bg$-dense.
\end{proof}

We conjecture that~$X_6$ and~$X_7$ should not be exceptions to Theorem~\ref{theo::classify g-dense cluster}.
It is known that:
\begin{enumerate}
\item\cite{Mills} If $Q$ is mutation equivalent to the quiver $X_6$, then there is a cluster whose cone of $\bg$-vectors is $\bR^n_{\le 0}$;
\item\cite{Seven} If $Q$ is mutation equivalent to the quiver $X_7$, then $\cF^{\bg}_{\cluster}(Q)$ is contained in some open half-space in $\bR^n$.
\end{enumerate}
Therefore, the following seems natural.

\begin{conjecture}\label{conj::typex}
\begin{enumerate}
\item The quiver $X_6$ is cluster-$\bg$-dense.
\item The quiver $X_7$ is half cluster-$\bg$-dense.
\end{enumerate}
\end{conjecture}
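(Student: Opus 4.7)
The plan is to adapt the strategy of Theorem~\ref{theo::density} to the exceptional quivers $X_6$ and $X_7$ by replacing the tameness assumption with a weaker ad hoc hypothesis that can be verified for these two cases. The crucial observation is that the proof of Theorem~\ref{theo::density} only invokes tameness through Theorem~\ref{theo::genericDecompositionForTame}, and actually only through its consequence Corollary~\ref{coro::factor-through-SigmaLambda}. Thus the first step is to fix a non-degenerate Jacobi-finite potential $W$ on each of $X_6$ and $X_7$ and to show that, despite $J(Q,W)$ being wild, every $\bg$-vector still admits a generic decomposition
\[
 \bg = \bg_1 + \ldots + \bg_r + \bh_1 + \ldots + \bh_s
\]
of the shape described in Theorem~\ref{theo::genericDecompositionForTame}, where the $\bh_j$ arise from a $1$-parameter family of $\tau$-periodic bricks. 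Once that structure is in place, the cylinder argument $\cyl_{\Sigma H_j}$ goes through verbatim and density follows as in Section~\ref{sect::densityOfFan}.

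For $X_6$, I would bring in Mills' result, which exhibits a cluster with cone $\bR^6_{\le 0}$, so that both the positive and negative orthants already lie in $\cF^{\bg}_{\cluster}(X_6)$. Using the categorification of Theorem~\ref{theo::twosiltbijcluster} and the mutation-invariance of $\bg$-tameness (Lemma~\ref{lem::g-tame mutation-inv}), the task reduces to producing, for some representative in the mutation class, finitely many $\tau$-periodic brick families whose $\bg$-vectors $\bh_j$ close up the gap between the two orthants. Applying the cylinder iterations $\cyl_{\Sigma H_s}^{a_s}\cdots\cyl_{\Sigma H_1}^{a_1}G'$ of the main theorem to the Bongartz co-completion of any presilting $G$ should then sweep out all rational directions.

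For $X_7$, the analogous plan is based on Seven's result that the fan lies in an open half-space, so $\bar\cF^{\bg}_{\cluster}(X_7)$ is contained in a closed half-space; only the ``upper'' side needs filling. Since $X_7$ is mutation-equivalent to its opposite, the symmetric statement for the negative side would be automatic. The construction of $\tau$-periodic bricks should now be aimed specifically at producing $\bh_j$ lying on or approaching the separating hyperplane, so that iterated cylinders approach every boundary ray from inside.

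The main obstacle is the construction and control of the $\tau$-periodic bricks in the wild setting. In the tame case, Crawley-Boevey's theorem supplies them essentially for free, and the vanishing $e(\bh_j,\bh_j)=0$ together with the factorisation properties of Corollary~\ref{coro::factor-through-SigmaLambda} then follow uniformly. For $X_6$ and $X_7$ no such blanket statement is available, and any replacement would likely require an explicit model: either a direct analysis of the Jacobian algebra via its quiver representations, or a geometric/orbifold model for the mutation class (in the spirit of the surface case treated in~\cite{Yurikusa}) in which $\tau$-periodic bricks correspond to closed curves. Producing enough such bricks to parametrise the entire boundary of the fan, and verifying the hypotheses of Lemmas~\ref{lemm::commuting-cylinders} and~\ref{lemm::rigid-cylinders} for them, is where the real difficulty lies.
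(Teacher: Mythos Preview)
The statement you are addressing is labelled as a \emph{Conjecture} in the paper, and the paper does not prove it. In fact, the paper explicitly remarks that the Jacobian algebras associated with $X_6$ and $X_7$ are wild, so Theorem~\ref{theo::density} cannot be applied, and this is precisely why the statement is left as a conjecture rather than a theorem. There is therefore no proof in the paper to compare your proposal against.

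What you have written is not a proof but a research strategy, and you yourself identify the gap: the entire cylinder machinery of Section~\ref{sect::densityOfFan} rests on Theorem~\ref{theo::genericDecompositionForTame} (via Corollary~\ref{coro::factor-through-SigmaLambda}), which in turn rests on Crawley-Boevey's theorem for tame algebras. Without tameness you have no mechanism to guarantee that generically indecomposable $\bg$-vectors which are not presilting come from $1$-parameter families of $\tau$-periodic bricks with $e(\bh_j,\bh_j)=0$, nor that the factorisation conditions needed for Lemmas~\ref{lemm::commuting-cylinders} and~\ref{lemm::rigid-cylinders} hold. Your final paragraph correctly names this as ``where the real difficulty lies''; that is an honest assessment, but it means the proposal is an outline of what one would \emph{like} to do, not an argument that actually establishes the conjecture.
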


Remark that the Jacobian algebras associated with $X_6$ and $X_7$ are wild \cite{GeissLabardiniFragosoSchroer}.
Thus we cannot apply Theorem \ref{theo::density} for these classes.

The above allows us to give a partial converse to Lemma~\ref{lemm::clustergDenseImpliesJacobiDense}.

\begin{corollary}
Suppose that $Q$ is not mutation equivalent to one of the quivers $X_6$, $X_7$ and $K_m$ with $m \ge 3$. 
Then the following are equivalent:
\begin{enumerate}
\item $Q$ is mutation-finite;
\item $Q$ admits a non-degenerate potential~$W$ such that $J(Q,W)$ is~$\bg$-tame;
\item $Q$ is cluster-$\bg$-dense or half cluster-$\bg$-dense.
\end{enumerate}
Moreover, $K_m$ with $m \ge 3$ is mutation-finite, but not (half) cluster-$\bg$-dense and the unique potential~$0$ on~$K_m$ is such that~$J(K_m,0)$ is not~$\bg$-dense.
\end{corollary}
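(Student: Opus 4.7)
The plan is to assemble the corollary from the two main classification results of this section, namely Theorem \ref{theo::classify g-tame J} and Theorem \ref{theo::classify g-dense cluster}, together with a direct analysis of the Kronecker case. Since the hypothesis on $Q$ is the same as in both of those theorems, the equivalence (1)$\Leftrightarrow$(2) is an immediate restatement of Theorem \ref{theo::classify g-tame J}, and the equivalence (1)$\Leftrightarrow$(3) is a restatement of Theorem \ref{theo::classify g-dense cluster}. So for the three-way equivalence no new argument is needed; one simply chains the two results.

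For the final assertion on $K_m$ with $m\geq 3$, I would first observe that $K_m$ has exactly two vertices and no oriented cycles, so the complete path algebra has trivial cyclic part and the unique potential is $W=0$. Hence $J(K_m,0)=kK_m$, and this potential is automatically non-degenerate (mutations of $K_m$ produce no $2$-cycles). Mutation-finiteness of $K_m$ is immediate since every mutation of $K_m$ is again $K_m$ (up to isomorphism).

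To see that $J(K_m,0)$ is not $\bg$-tame and that $K_m$ is neither cluster-$\bg$-dense nor half cluster-$\bg$-dense, I would invoke Example \ref{ex::rank2}: the cluster $\bg$-vector fan $\cF^{\bg}_{\cluster}(K_m)$ is explicitly described, with its closure being the complement of a non-empty open cone between the two rays $r_+\ne r_-$ of slope $(-m\pm\sqrt{m^2-4})/2$. Thus $\bar\cF^{\bg}_{\cluster}(K_m)$ is neither $\bR^2$ nor a closed half-space. By Theorem \ref{theo::twosiltbijcluster}, the union of the $\bg$-vector cones of $\tsilt^+J(K_m,0)$ coincides with $\bar\cF^{\bg}_{\cluster}(K_m)$, and applying the same theorem to $Q^{\op}=K_m$ shows that the cones of $\tsilt^-J(K_m,0)$ yield the negative of $\bar\cF^{\bg}_{\cluster}(K_m)$, which by the explicit picture is contained in the same (non-half-space) closed region, so $\bar\cF^{\bg}_{\tsilt}(J(K_m,0))\ne \bR^2$; hence $J(K_m,0)$ is not $\bg$-tame.

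There is no serious obstacle here: the corollary is essentially a bookkeeping statement, and the only point requiring care is the observation about $K_m$, which is transparent once one notes that $K_m$ supports only the zero potential and invokes the explicit description of Example \ref{ex::rank2}.
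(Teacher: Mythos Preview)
Your proposal is correct and follows the same route as the paper: the equivalences are immediate from Theorems~\ref{theo::classify g-tame J} and~\ref{theo::classify g-dense cluster}, and the paper's proof says exactly this in one line. Your separate treatment of $K_m$ is more detailed than necessary, since both theorems already contain the $K_m$ assertions (``a Jacobian algebra for the quiver $K_m$ is not $\bg$-tame for $m\ge 3$'' and ``$K_m$ is not (half) cluster-$\bg$-dense for $m\ge 3$''); in particular you need not argue via $\tsilt^-$, as for the acyclic quiver $K_m$ one already has $\tsilt J(K_m,0)=\tsilt^+ J(K_m,0)$ by Proposition~\ref{prop::mutationtransitivity}, so the silting fan coincides with the cluster fan directly.
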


\begin{proof}
The assertions follow from Theorems \ref{theo::classify g-tame J} and \ref{theo::classify g-dense cluster}.
\end{proof}

\section{Scattering diagrams}
\label{sect::scattering}

Our work in this paper allows for a modest contribution to the question of the equivalence between the cluster and the stability scattering diagrams for (half) cluster-$\bg$-dense quivers; our result will follow by using an argument due to L.~Mou.  The statement of the result, however, requires some recollections on scattering diagrams which will occupy us for the beginning of the section.
We refer to \cite{bridgeland2017,GHKK,Mou} for scattering diagrams.

\subsection{Consistent scattering diagrams}

Let $N \simeq \bZ^{\oplus n}$ be a free abelian group.
Fix a basis $(e_1,\ldots,e_n)$ of $N$.
We use the notations $M=\Hom{\bZ}(N,\bZ)$, $M_{\bR} = M \otimes_{\bZ} \bR$,
\[
N^{\oplus} := \biggl\{\sum_{i=1}^n a_i e_i \mid a_i \in \bZ_{\ge 0}\biggr\} \quad\text{and}\quad N^+:=N^{\oplus}\setminus\{0\},
\]
and consider an $N^+$-graded Lie algebra
\[
\fg = \bigoplus_{d \in N^+}\fg_d.
\]

First, we define $\fg$-scattering diagrams for $\fg$ with finite support, that is,
\[
\#\Supp(\fg) < \infty, \quad\text{where}\quad \Supp(\fg) := \{d \in N^+ \mid \fg_d \neq 0\}.
\]
In this case, $\fg$ is nilpotent and there is a unipotent algebraic group $G$ such that $\exp : \fg \rightarrow G$ is a bijection.

For a cone $\sigma \subset M_{\bR}$, there is a Lie subalgebra
\[
\fg_{\sigma}:=\bigoplus_{d \in N^+ \cap \sigma^{\perp}} \fg_d \subset \fg,
\]
where $\sigma^{\perp} := \{d \in N \mid m(d)=0 \text{ for any $m \in \sigma$}\}$.

Let $P$ be a finite subset of $N^+$.
For a partition $P=P_+ \sqcup P_0 \sqcup P_-$ with $P_0 \neq \emptyset$, there is a cone
\[
\{m \in M_{\bR} \mid m(P_+)>0, m(P_0)=0, m(P_-)<0\} \subset M_{\bR}.
\]
The set of all such cones forms a polyhedral fan $\fS_P$ in $M_{\bR}$.
Cones with codimension one are called \emph{walls}.
We denote by $\cW(\fS)$ the set of walls of $\fS$.

\begin{definition}
Suppose that $\fg$ has a finite support $S=\Supp(\fg)$. A \emph{$\fg$-scattering diagram} is a pair $\fD=(\fS_S,\phi_{\fD})$ with function $\phi_{\fD} : \cW(\fS_S) \rightarrow G$ such that for $\sigma \in \cW(\fS_S)$, $\phi_{\fD}(\sigma) \in \exp(\fg_{\sigma})$.
\end{definition}

Second, we consider the consistency of a $\fg$-scattering diagram $\fD=(\fS_S,\phi_{\fD})$ for $\fg$ with finite support $S=\Supp(\fg)$.
A \emph{$\fD$-generic curve} is a smooth curve $\gamma : [0,1] \rightarrow M_{\bR}$ such that
\begin{enumerate}
\item the endpoints $\gamma(0)$ and $\gamma(1)$ lie in cones of $\fS_S$ with dimension $n$,
\item $\gamma$ does not intersect cones of $\fS_S$ with codimension at least two,
\item $\gamma$ and walls of $\fS_S$ intersect transversally.
\end{enumerate}
Then there are finitely many points $0 < t_1 < \cdots < t_{l} < 1$ and walls $\sigma_1,\ldots,\sigma_l \in \cW(\fS_S)$ such that $\gamma(t_i) \in \sigma_i$ and for $t \in [0,1]\setminus\{t_1,\ldots,t_l\}$, $\gamma(t)$ lie in cones of $\fS_S$ with dimension $n$.
We define the \emph{path-ordered product}
\[
\Phi_{\fD}(\gamma):=\phi_{\fD}(\sigma_l)^{\epsilon_l}\cdots\phi_{\fD}(\sigma_1)^{\epsilon_1} \in G,
\]
where $\epsilon_i\in\{1,-1\}$ is the negative of sign of the derivative of $\gamma(t)$ at $t=t_i$.
We say that two $\fg$-scattering diagrams $\fD_1$ and $\fD_2$ are \emph{equivalent} if any $\fD_1$-generic and $\fD_2$-generic curve $\gamma$ satisfies $\Phi_{\fD_1}(\gamma)=\Phi_{\fD_2}(\gamma)$.

\begin{definition}
We say that a $\fg$-scattering diagram $\fD$ is \emph{consistent} if any $\fD$-generic curves $\gamma_1$ and $\gamma_2$ with same endpoints satisfy $\Phi_{\fD}(\gamma_1)=\Phi_{\fD}(\gamma_2)$.
\end{definition}

Let $\fD$ be a consistent $\fg$-scattering diagram.
For cones $\sigma_1$ and $\sigma_2$ of $\fS_S$ with dimension $n$, we define
\[
\Phi_{\fD}(\sigma_1,\sigma_2):=\Phi_{\fD}(\gamma),
\]
where $\gamma$ is any $\fD$-generic curve with $\gamma(0) \in \sigma_1$ and $\gamma(1) \in \sigma_2$.
It does not depend on $\gamma$.

Finally, we define (consistent) $\fg$-scattering diagrams for a general $\fg$.
We define a map $\delta : N\rightarrow\bZ$ by $(d_i)\mapsto\sum_{i=1}^nd_i$.
Let $\fg^{>k}:=\bigoplus_{\delta(d)>k}\fg_d$ be a Lie subalgebra of $\fg$.
Then $\fg^{\le k}:=\fg/\fg^{>k}$ is a nilpotent Lie algebra with finite support and there is the corresponding unipotent algebraic group $G^{\le k}$.
There is a bijection as sets between 
\[
\hat{\fg}:=\mathop{\lim_{\longleftarrow}}_k\fg^{\le k} \quad\text{and}\quad \hat{G}:=\mathop{\lim_{\longleftarrow}}_k G^{\le k}.
\]

For $i>j$ and a $\fg^{\le i}$-scattering diagram $\fD_{\fg^{\le i}}=(\fS,\phi)$, we have a $\fg^{\le j}$-scattering diagram $\pi^{ij}_{\ast}(\fD_{\fg^{\le i}})=(\fS,\pi^{ij}\phi)$, where $\pi^{ij}$ is a natural homomorphism from $G^{\le i}$ to $G^{\le j}$.

\begin{definition}
A \emph{$\fg$-scattering diagram} is a sequence of $\fg^{\le k}$-scattering diagrams $(\fD^{\le k})_{k \ge 1}$ such that $\pi^{ij}_{\ast}(\fD^{\le i})$ is equivalent to $\fD^{\le j}$ for $i>j$.
We say that it is \emph{consistent} if so is $\fD^{\le k}$ for any $k \ge 1$.
\end{definition}

Remark that a consistent $\fg$-scattering diagram is considered as the pair $(\fS,\phi)$ consisting of the collection $\fS$ of certain cones, that is not rational polyhedral in general, and a function $\phi : \fS \rightarrow \hat{G}$ (see \cite[Remark 2.26]{Mou}).

In this section, our main subjects of study are cluster scattering diagrams and stability scattering diagrams.
To define them, the following result plays an important role.

\begin{theorem}\cite[Proposition 3.4]{bridgeland2017}\cite[Theorem 2.1.6]{KontsevichSoibelman}
There is a bijection between the set of equivalence classes of consistent $\fg$-scattering diagrams and $\hat{G}$ as sets.
\end{theorem}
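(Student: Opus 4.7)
The plan is to reduce the statement to the finite-support case via the filtration $\fg^{>k}$ and argue order by order. Since a $\fg$-scattering diagram is, by definition, a compatible system $(\fD^{\le k})_{k\ge 1}$ of $\fg^{\le k}$-scattering diagrams, and since $\hat{G}=\varprojlim G^{\le k}$, it suffices to exhibit, for each $k\ge 1$, mutually compatible bijections
\[
\Psi_k\colon \{\text{consistent }\fg^{\le k}\text{-scattering diagrams}\}/\!\sim \;\longleftrightarrow\; G^{\le k},
\]
and then take the inverse limit.

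I would construct $\Psi_k$ by induction on $k$. At the base level $k=1$, the only degrees that appear are the generators $e_1,\ldots,e_n$. Since $[\fg^{>0},\fg^{>0}]\subseteq \fg^{>1}$, every commutator of wall functions vanishes in $G^{\le 1}$, so consistency at codimension-$2$ joints is automatic. A consistent $\fg^{\le 1}$-scattering diagram is then determined by a choice of wall function on each hyperplane $e_i^\perp$, and these data parametrize $\prod_i \exp(\fg_{e_i})=\exp(\fg^{\le 1})=G^{\le 1}$.

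For the inductive step, I would show that, given a consistent $\fg^{\le k}$-scattering diagram $\fD^{\le k}$ corresponding to $g_k\in G^{\le k}$, the equivalence classes of consistent $\fg^{\le k+1}$-scattering diagrams projecting to $\fD^{\le k}$ form a torsor under the fiber of the surjection $G^{\le k+1}\twoheadrightarrow G^{\le k}$. This rests on the standard local lemma at each joint (codimension-$2$ stratum of the fan $\fS$): the failure of the path-ordered product along a small loop around a joint to equal the identity is a commutator living in level strictly greater than $k$, and can be uniquely corrected modulo $\fg^{>k+1}$ by adjoining new walls at level $k+1$ in a neighborhood of the joint. The Baker--Campbell--Hausdorff formula makes this correction explicit, and this is precisely the wall-crossing algorithm developed in \cite[Proposition 3.4]{bridgeland2017} and \cite[Theorem 2.1.6]{KontsevichSoibelman}.

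The main technical obstacle is showing that the local corrections performed independently at each joint assemble into a globally consistent scattering diagram. Here one uses that the quotient $\fg^{\le k+1}/\fg^{\le k}$ is abelian (as $[\fg^{>k},\fg^{>0}]\subseteq \fg^{>k+1}$), so that corrections at different joints do not interact modulo $\fg^{>k+1}$; global compatibility is then a consequence of the Jacobi identity in $\fg^{\le k+1}$ applied around the codimension-$3$ strata of $\fS$. Passing to the inverse limit over $k$ yields the desired bijection with $\hat{G}$.
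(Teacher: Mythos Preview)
The paper does not prove this statement; it is quoted as a black box from \cite{bridgeland2017} and \cite{KontsevichSoibelman} and is used only to introduce the notation~$\fD_g$ for the consistent scattering diagram attached to $g\in\hat G$. There is therefore nothing in the paper to compare your argument against.

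Your sketch follows the standard order-by-order construction from those references and is correct in outline, but it has one genuine gap: you never say what the bijection $\Psi_k$ actually \emph{is}. In \cite{bridgeland2017} the map sends a consistent diagram $\fD^{\le k}$ to the path-ordered product $\Phi_{\fD^{\le k}}(M_\bR^+,M_\bR^-)$ along any generic path from the all-positive to the all-negative chamber; without naming this map, your torsor claim in the inductive step has nothing to be a torsor over, and injectivity (two consistent diagrams with the same total product are equivalent) is never addressed. A second, smaller point: the appeal to the Jacobi identity at codimension-$3$ strata is more than is needed. Once you have observed that the degree-$(k{+}1)$ piece is central in $\fg^{\le k+1}$, the corrections inserted at different joints commute with every existing wall function, so they cannot create new inconsistencies at level $k{+}1$; global consistency of the corrected diagram then follows directly, with no separate compatibility check at higher-codimension strata.
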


We denote by $\fD_g=(\fD_g^{\le k})_{k \ge 1}$ the consistent $\fg$-scattering diagram corresponding to $g\in\hat{G}$.

\subsection{Cluster scattering diagrams}

Let $Q$ be a quiver without loops and $2$-cycles and $|Q_0|=n$.
Assume that $N$ has a skew-symmetric form $\{-,-\}:N \times N\rightarrow\bZ$ given by
\[
\{e_i,e_j\}:=\#\{\text{arrows from $j$ to $i$ in $Q$}\}-\#\{\text{arrows from $i$ to $j$ in $Q$}\}
\]
and $\fg$ is skew-symmetric, that is, if $\{d_1,d_2\}=0$ for $d_1, d_2\in N$, $[\fg_{d_1},\fg_{d_2}]=0$.

For $m \in M_{\bR}$, there is a decomposition
\[
\fg=\fg^m_+\oplus\fg^m_0\oplus\fg^m_-,
\]
where
\[
\fg^m_{\pm}:=\bigoplus_{d \in N^+: \pm m(d)>0}\fg_d \quad\text{and}\quad \fg^m_{0}:=\bigoplus_{d \in N^+: m(d)=0}\fg_d.
\]
We denote by $\hat{G}^m_{\bullet}$ the subgroup of $\hat{G}$ induced by $\fg^m_{\bullet}$ for $\bullet\in\{+,-,0\}$.
This gives a unique decomposition of $\hat{G}\ \reflectbox{$\in$}\ g=g^m_+ \cdot g^m_0 \cdot g^m_-$, where $g^m_{\bullet} \in \hat{G}^m_{\bullet}$.
Thus there is a projection map $\pi_m : G\rightarrow\hat{G}^m_0$ given by $g \mapsto g^m_0$.
Moreover, for a map $p^{\ast} : N \rightarrow M$ given by $d \mapsto \{d,-\}$, there is a decomposition
\[
\fg^{p^{\ast}(d)}_0=\fg_d^{||}\oplus\fg_d^{\perp},
\]
where $\fg_d^{||}:=\bigoplus_{k=1}^{\infty}\fg_{kd}$.
This naturally gives group homomorphisms $r_d : \hat{G}^{p^{\ast}(d)}_0 \rightarrow \hat{G}_d^{||}$ and $\psi_d := r_d\pi_{p^{\ast}(d)} : \hat{G} \rightarrow \hat{G}_d^{||}$.

\begin{proposition}\cite[Proposition 3.3.2]{KontsevichSoibelman}
The map 
\[
\psi:=(\psi_d)_{d \in N^+:primitive} : \hat{G} \rightarrow \prod_{d \in N^+:primitive}\hat{G}_d^{||}
\]
is a bijection as sets.
\end{proposition}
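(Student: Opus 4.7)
My plan is to reduce the assertion to finite truncations and then argue by induction on the truncation level. Since $\hat G=\varprojlim_k G^{\le k}$ and only finitely many primitive $d\in N^+$ satisfy $\delta(d)\le k$, the target is likewise an inverse limit of its $k$-truncations. It therefore suffices to show that for every $k\ge 1$ the induced map
\[
\psi^{\le k}\colon G^{\le k}\longrightarrow\prod_{\substack{d\text{ primitive}\\ \delta(d)\le k}}\exp\bigl(\fg_d^{||}\bigr)^{\le k}
\]
is a bijection of sets.

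For the induction I would exploit the central extension $1\to\exp(\fg_{=k})\to G^{\le k}\to G^{\le k-1}\to 1$, where $\fg_{=k}:=\bigoplus_{\delta(d)=k}\fg_d$ is the top graded piece (central because $[\fg_{=k},\fg^{\le k}]\subseteq\fg^{>k}=0$ inside $\fg^{\le k}$). The base case $k=1$ is immediate: $\fg^{\le 1}=\bigoplus_{\delta(d)=1}\fg_d$ is abelian with each summand supported on a distinct primitive ray, and $\psi_d$ acts as the identity on its summand. For the inductive step the key observation is that on the central kernel $\exp(\fg_{=k})$ every Baker--Campbell--Hausdorff correction vanishes, so both $\pi_{p^*(d)}$ and $r_d$ are $\bQ$-linear there and $\psi_d$ reduces to the projector $\fg_{=k}\twoheadrightarrow\fg_{(k/\delta(d))d}$ (nonzero precisely when $\delta(d)\mid k$). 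The restriction of $\psi$ to the kernel is therefore the tautological direct-sum decomposition of $\fg_{=k}$ along primitive rays, which is plainly bijective. Combined with the inductive bijectivity for $G^{\le k-1}$, this yields bijectivity of $\psi^{\le k}$ by a direct diagram chase.

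The main obstacle is that $\pi_{p^*(d)}$ is only a map of sets, not a group homomorphism, so the diagram chase requires a compatibility between $\psi$ and the central extension. Concretely, I would need to verify
\[
\pi_{p^*(d)}(g\cdot\eta)=\pi_{p^*(d)}(g)\cdot\pi_{p^*(d)}(\eta)
\]
for $g\in G^{\le k}$ and $\eta\in\exp(\fg_{=k})$. This should follow because any reshuffling of the unique $g^m_+g^m_0 g^m_-$ factorization under multiplication by a central element of pure top degree either preserves the sign sectors (since $\eta$ has a clean decomposition by the linearity above) or produces brackets landing in $\fg_{>k}=0$. Once this compatibility is in hand, injectivity and surjectivity of $\psi^{\le k}$ reduce cleanly to the inductive hypothesis and the explicit kernel computation, completing the induction and hence the proof.
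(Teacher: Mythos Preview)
The paper does not supply a proof of this proposition; it is quoted directly from \cite[Proposition 3.3.2]{KontsevichSoibelman} as background material in the recollection on scattering diagrams, and the text moves on immediately to the definition of the cluster scattering diagram. There is thus no proof in the paper to compare your attempt against.

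That said, your sketch is a reasonable route to the result. The reduction to finite truncations and the induction on $k$ via the central extension $1\to\exp(\fg_{=k})\to G^{\le k}\to G^{\le k-1}\to 1$ are correct, as is the observation that on the central top piece each $\psi_d$ becomes the linear projector $\fg_{=k}\twoheadrightarrow\fg_{(k/\delta(d))d}$, so that the restriction of $\psi$ to the kernel is the tautological direct-sum decomposition along primitive rays. The compatibility you single out as the main obstacle, namely $\pi_{p^*(d)}(g\eta)=\pi_{p^*(d)}(g)\cdot\pi_{p^*(d)}(\eta)$ for $\eta\in\exp(\fg_{=k})$, does hold: decompose $\eta=\eta_+\eta_0\eta_-$ linearly, use centrality of $\eta$ to commute its factors past $g_+,g_0,g_-$, and note that each $\fg_\bullet^m$ is a Lie subalgebra so $g_\bullet\eta_\bullet$ stays in the corresponding subgroup; uniqueness of the factorization then gives $(g\eta)_0=g_0\eta_0$. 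With this compatibility, the five-lemma-style chase for injectivity and surjectivity of $\psi^{\le k}$ goes through exactly as you outline. One minor point worth making explicit is that the target $\prod_d(\hat G_d^{\|})^{\le k}$ also sits in a compatible short exact sequence with abelian (hence central) kernel, since each $\fg_d^{\|}$ is abelian by skew-symmetry; this is what makes the surjectivity step clean.
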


Set the skew-symmetric $N^+$-graded Lie algebra
\[
\fg=\fg_{\rm cl}:=\bigoplus_{d \in N^+}\bQ x^d \quad\text{with}\quad [x^{d_1},x^{d_2}]=\{d_1,d_2\}x^{d_1+d_2} \quad\text{for $d_1, d_2 \in N^+$}. 
\]

\begin{definition}
Let $g=(g_d) \in \prod_{d \in N^+:primitive}\hat{G}_d^{||}$ given by
\[
 g_d=\left\{
 \begin{array}{ll}
  \exp\Biggl(\displaystyle{\sum_{k=1}^{\infty}\frac{(-1)^{k-1}x^{ks_i}}{k^2}}\Biggr) &\text{if $d=e_i$},\\
  \rm{id} &\text{otherwise}.
 \end{array}\right.
\]
Then the corresponding consistent $\fg_{\rm cl}$-scattering diagram $\fD_Q:=\fD_{\psi^{-1}(g)}$ is called the \emph{cluster scattering diagram} associated with $Q$.
\end{definition}

\subsection{Stability scattering diagrams}

Let $J$ be a Jacobian algebra, and denote by  $K_0(\mod J)\simeq\bZ^n=N$ its Grothendieck group.
In this subsection, we recall the stability scattering diagram associated with $J$.
We refer to \cite{bridgeland2017} for the details.

Let $\fM(J)$ be the moduli stack associated with $\mod J$.
There is a decomposition 
\[
\fM(J)=\coprod_{d \in N^{\oplus}}\fM(J)_d,
\]
where $\fM(J)_d$ is the moduli stack of $J$-modules with dimension $d$.
Let $K({\rm St}/\bC)$ be the Grothendieck ring of stacks over $\bC$.
The Grothendieck group $K({\rm St}/\fM(J))$ of stacks over $\fM(J)$ is an $N^{\oplus}$-graded $K({\rm St}/\bC)$-algebra.
Since there is a unique ring homomorphism from $K({\rm St}/\bC)$ to $\bQ(t)$ sending the classes of smooth projective varieties to their Poincar\'e polynomials, we have an $N^{\oplus}$-graded $\bQ(t)$-algebra
\[
H(J):=K({\rm St}/\fM(J))\otimes_{K({\rm St}/\bC)}\bQ(t),
\]
called the \emph{motivic Hall algebra} associated with $J$.
For $d \in N^{\oplus}$, let $H(J)_d$ be the subspace of $H(J)$ generated by elements with form $[X \rightarrow \fM(J)]$ factoring through the inclusion from $\fM(J)_d$ to $\fM(J)$.
Then
\[
\fg_{\rm Hall}(J) := \bigoplus_{d \in N^+}H(J)_d
\]
is an $N^+$-graded Lie algebra with commutator bracket.
We have an element of the prounipotent algebraic group $\exp(\hat{\fg}_{\rm Hall}(J))$
\[
1_{\fM(J)}:=[\fM(J)\overset{\rm id}{\rightarrow}\fM(J)] \in 1+\hat{\fg}_{\rm Hall}(J)\simeq\exp(\hat{\fg}_{\rm Hall}(J)).
\]
The consistent $\fg_{\rm Hall}$-scattering diagram $\fD_{1_{\fM(J)}}$ is called the \emph{Hall algebra scattering diagram} associated with $J$.

Define a subalgebra
\[
\bC_{\rm reg}(t):=\bC[t,t^{-1}][(1+t^2+\cdots+t^{2k})^{-1} \mid k \ge 1] \subset \bC(t).
\]
We denote by $\fg_{\rm reg}(J) \subset \fg_{\rm Hall}(J)$ the $\bC_{\rm reg}(t)$-submodule generated by $[X \rightarrow \fM(J)]$ with algebraic variety $X$.

\begin{theorem}[\cite{joyce2008,bridgeland2017}]  
Any Hall algebra scattering diagram is a $\fg_{\rm reg}(J)$-scattering diagram. Moreover, there is an $N^+$-graded Lie algebra homomorphism
\[
\cI : \fg_{\rm reg}(J) \rightarrow \fg_{\rm cl}.
\]
\end{theorem}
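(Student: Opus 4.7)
The plan is to prove the two assertions separately, following the Hall algebra framework developed by Joyce and reformulated for scattering diagrams by Bridgeland.

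For the first assertion, the central input is Joyce's no-pole theorem. The element $1_{\fM(J)}$ lies a priori only in $\exp(\hat{\fg}_{\rm Hall}(J))$, and its class in each graded piece involves factors $1/(t^2-1)^k$ arising from stabilizers of $J$-modules (which are extensions of tori by unipotent groups). I would first show that one can rewrite the walls of $\fD_{1_{\fM(J)}}$ through Joyce's logarithm expansion
\[
\log 1_{\fM(J)} = \sum_{d \in N^+} \epsilon_d \in \hat{\fg}_{\rm Hall}(J),
\]
together with the Möbius-inversion formula identifying $\epsilon_d$ with a virtual indecomposable class. Joyce's no-pole theorem then asserts that each $\epsilon_d$ actually belongs to $\fg_{\rm reg}(J)$: the poles at $t^2 = 1$ introduced by automorphism groups cancel against Möbius signs coming from the decomposition of modules. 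Finally, using a wall-crossing formula of Joyce--Reineke type, each $\phi_{\fD_{1_{\fM(J)}}}(\sigma)$ can be written as the exponential of an element of $\hat{\fg}_{\rm reg}(J)_\sigma$, which gives the claim.

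For the second assertion, I would construct $\cI$ on a generator $[X \xrightarrow{f} \fM(J)_d] \in \fg_{\rm reg}(J)_d$ with $X$ an algebraic variety by
\[
\cI\bigl([X \xrightarrow{f} \fM(J)_d]\bigr) = \chi(X) \cdot x^d,
\]
where $\chi$ denotes the topological Euler characteristic obtained by specializing the Poincaré polynomial at $t = 1$ (after compensating for the Lefschetz twist, so that the specialization lands in a commutative ring). One must first check that the defining relations of $\fg_{\rm reg}(J)$ as a $\bC_{\rm reg}(t)$-module are respected, which is possible precisely because the denominators allowed in $\bC_{\rm reg}(t)$ are those that remain invertible at the relevant specialization. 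To verify the bracket condition, I would use Joyce's description of the Hall algebra product in terms of the stack of short exact sequences: if $X_i$ parametrizes modules $M_i$ of class $d_i$, then $[X_1]\cdot [X_2] - [X_2]\cdot [X_1]$ in the Hall algebra fibers over the difference of $\mathrm{Ext}^1$-stacks, and the associated Euler characteristic reduces, via Riemann--Roch for $J$-modules, to
\[
\chi(X_1)\chi(X_2)\bigl(\langle d_2,d_1\rangle - \langle d_1,d_2\rangle\bigr) = \chi(X_1)\chi(X_2)\,\{d_1,d_2\},
\]
which is exactly $\{d_1,d_2\}\, x^{d_1+d_2}$ after applying $\cI$.

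The main obstacle is the no-pole theorem itself. At the level of motivic classes, quotient stacks by non-reductive automorphism groups genuinely produce elements of $\bQ(t)$ with poles along $t^2 = 1$, so regularity is not a formal consequence of working with well-behaved Lie subalgebras; it is a nontrivial cancellation of denominators that only appears after passing to the logarithm $\log 1_{\fM(J)}$. Establishing this cancellation rigorously is the technical heart of Joyce's construction and the proof would invoke it as a black box. Granting it, the Lie algebra homomorphism property of $\cI$ then follows from the Euler characteristic computation outlined above, and the two assertions of the theorem combine into the statement.
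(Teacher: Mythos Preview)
The paper does not prove this statement; it is quoted as a known result from \cite{joyce2008,bridgeland2017} with no argument supplied. So there is no ``paper's own proof'' to compare against: the authors treat both the regularity of the Hall algebra scattering diagram and the existence of the integration map $\cI$ as background facts, and immediately use them to define the stability scattering diagram $\fD_J$.

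Your proposal is a reasonable outline of the content of the cited references. The two ingredients you identify --- Joyce's no-pole theorem for the virtual indecomposable classes $\epsilon_d$, and the Euler-characteristic integration map whose compatibility with the bracket reduces to an $\mathrm{Ext}$-count giving the skew-form $\{d_1,d_2\}$ --- are indeed the substance of the result as developed by Joyce and recast by Bridgeland. You are also right to flag the no-pole theorem as the genuine obstacle and to invoke it as a black box; it is not something one reproves in this context. As a sketch this is fine, though note that in the present paper no such argument is expected: the theorem is purely a citation.
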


\begin{definition}
The consistent $\fg_{\rm cl}$-scattering diagram $\fD_J:=\cI(\fD_{1_{\fM(J)}})$ is called the \emph{stability scattering diagram} associated with $J$.
\end{definition}

\subsection{Cluster/Stability scattering diagrams}

Let $Q$ be a quiver without loops and $2$-cycles and $W$ a non-degenerate potential of $Q$.
We state the main result of this section.

\begin{theorem}
\begin{enumerate}
\item If $Q$ is cluster-$\bg$-dense, then the cluster scattering diagram $\fD_Q$ is equivalent to the stability scattering diagram $\fD_{J(Q,W)}$.
\item If $Q$ is half cluster-$\bg$-dense, then $\fD_Q$ and $\fD_{J(Q,W)}$ only differ by functions on walls in the separating hyperplane.
\end{enumerate}
\label{theo::scatteringdiags}
\end{theorem}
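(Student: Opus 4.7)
The plan is to combine an argument due to L.~Mou~\cite{Mou}, which compares the two scattering diagrams on the cluster $\bg$-vector fan, with the density results from the previous section. Mou's analysis starts from the categorification in Theorem~\ref{theo::twosiltbijcluster}: each chamber of $\cF^{\bg}_{\cluster}(Q)$ corresponds to an element of $\tsilt^+ J(Q,W)$ and also lifts to a chamber of both $\fD_Q$ and $\fD_{J(Q,W)}$. The key output of Mou's argument is that the walls of $\fD_Q$ and $\fD_{J(Q,W)}$ separating adjacent cluster chambers carry the \emph{same} wall-crossing functions, so the two diagrams agree on their restriction to $\bar\cF^{\bg}_{\cluster}(Q)$.

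For (1): if $Q$ is cluster-$\bg$-dense, then $\bar\cF^{\bg}_{\cluster}(Q)=\bR^n$. Since both $\fD_Q$ and $\fD_{J(Q,W)}$ are consistent and agree on all chambers of this dense fan, every path-ordered product between two generic points in chambers of $\cF^{\bg}_{\cluster}(Q)$ is the same in both diagrams. Density of these chambers together with consistency then forces $\fD_Q$ and $\fD_{J(Q,W)}$ to be equivalent as $\fg_{\rm cl}$-scattering diagrams.

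For (2): if $Q$ is half cluster-$\bg$-dense, then $\bar\cF^{\bg}_{\cluster}(Q)$ is a closed half-space $H^+$ bounded by a hyperplane $H$, and Mou's argument yields agreement of the two diagrams on $H^+$. For the complementary half-space $H^-$, I invoke Theorem~\ref{theo::twosiltbijcluster}(2): the chambers coming from $\tsilt^- J(Q,W)$ correspond via negation to cluster chambers of $Q^{op}$, and $-\bar\cF^{\bg}_{\cluster}(Q^{op})$ covers exactly $H^-$. Applying Mou's comparison to $(Q^{op},W^{op})$ and transferring through this identification, $\fD_Q$ and $\fD_{J(Q,W)}$ must also agree on every wall of the fan living in $H^-$. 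Consequently, the only walls on which the two scattering diagrams can differ are those contained in the separating hyperplane $H$.

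The main obstacle is the first step, namely invoking Mou's comparison in the precise form stated above: verifying that the wall-crossing functions of $\fD_Q$ and $\fD_{J(Q,W)}$ coincide on \emph{every} wall of $\cF^{\bg}_{\cluster}(Q)$, and that neither diagram carries additional walls in the interior of a cluster chamber. Once this technical input is in hand, the rest of the argument reduces to combining it with our density result for $\cF^{\bg}_{\tsilt}(J(Q,W))$ and with the identification $\cF^{\bg}_{\tsilt}(J(Q,W))=\cF^{\bg}_{\cluster}(Q)\cup\bigl(-\cF^{\bg}_{\cluster}(Q^{op})\bigr)$ provided by Theorem~\ref{theo::twosiltbijcluster}.
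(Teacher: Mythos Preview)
Your proposal is correct and follows essentially the same strategy as the paper: invoke Mou's result that the cluster chambers and the walls between them carry identical wall functions in $\fD_Q$ and $\fD_{J(Q,W)}$, and then use density to conclude.

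Two minor differences in execution are worth noting. First, the paper makes the density step precise by working at each finite truncation level $k$: for fixed $k$, the fan $\fS^{\le k}$ is a genuine finite polyhedral fan, so every wall $\sigma$ is adjacent to two full-dimensional cones $\sigma^\pm$, and density guarantees that the path-ordered products $\Phi(M_\bR^+,\sigma^\pm)$ can be written purely as products of wall functions from $\cF^{\bg}_k(Q)$; this immediately gives $\phi_Q^{\le k}(\sigma)=\phi_J^{\le k}(\sigma)$. Your phrase ``density together with consistency forces equivalence'' is the right intuition, but passing to finite level is what makes it rigorous. Second, for part~(2) the paper does not transfer through $(Q^{op},W^{op})$; instead, the version of Mou's theorem it quotes (Theorem~\ref{theo::cluster subfan}) already applies to the full fan $\cF^{\bg}_{\cluster}(Q)\cup(-\cF^{\bg}_{\cluster}(Q^{op}))$, so both half-spaces are handled simultaneously inside $\fD_Q$ and $\fD_{J(Q,W)}$ themselves. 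Your detour via $Q^{op}$ would require an extra comparison between the scattering diagrams of $Q$ and $Q^{op}$, which is avoidable.
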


To prove Theorem \ref{theo::scatteringdiags}, we need some preparations.
Set $J=J(Q,W)$ and $\fg=\fg_{\rm cl}$.
We only need to compare two consistent $\fg^{\le k}$-scattering diagrams $\fD_Q^{\le k}=(\fS_Q^{\le k},\phi_Q^{\le k})$ and $\fD_J^{\le k}=(\fS_J^{\le k},\phi_J^{\le k})$ for any $k \ge 1$.
Note that $\fg^{\le k}$ has finite support.

We can consider $\cF^{\bg}_{\cluster}(Q) \cup (-\cF^{\bg}_{\cluster}(Q^{\op}))$ as a fan in $M_{\bR} \simeq \bR^n$.
For a wall $\sigma \subset d_0^{\perp}$ of $\cF^{\bg}_{\cluster}(Q) \cup (-\cF^{\bg}_{\cluster}(Q^{\op}))$, where $d_0 \in N^+$ is primitive, let
\[
\phi(\sigma)=\exp\Biggl(\displaystyle{\sum_{k=1}^{\infty}\frac{(-1)^{k-1}x^{kd_0}}{k^2}}\Biggr) \in \hat{G}.
\]
For a projection map $p_k : \hat{G} \rightarrow G^{\le k}$, we define a fan $\cF^{\bg}_k(Q)$ consisting of cones $\sigma$ of $\cF^{\bg}_{\cluster}(Q) \cup (-\cF^{\bg}_{\cluster}(Q^{\op}))$ such that $p_k\phi(\sigma)$ is not trivial.
In particular, it has two cones
\[
M_{\bR}^{\pm}:=\{m \in M_{\bR} \mid \pm m(e_i) \ge 0\text{ for any $i$}\} \subset M_{\bR}.
\]

\begin{theorem}\cite[Theorem 4.25]{Mou}\label{theo::cluster subfan}
The fan $\cF^{\bg}_k(Q)$ is a common subfan of $\fS_Q^{\le k}$ and $\fS_J^{\le k}$ such that $\phi_Q^{\le k}(\sigma)=\phi_J^{\le k}(\sigma)=p_k\phi(\sigma)$ for any $\sigma \in \cW(\cF^{\bg}_k(Q))$.
\end{theorem}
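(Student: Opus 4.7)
The plan is to prove Theorem~\ref{theo::cluster subfan} by combining the uniqueness of consistent $\fg^{\le k}$-scattering diagrams with the categorification of cluster algebras (Theorem~\ref{theo::twosiltbijcluster}) and a Hall-algebra computation for $\tau$-rigid bricks. Since a consistent scattering diagram is determined by its path-ordered products along generic curves, it suffices to exhibit, for each wall $\sigma \in \cW(\cF^{\bg}_k(Q))$, a codimension-$1$ cone in both $\fS^{\le k}_Q$ and $\fS^{\le k}_J$ containing $\sigma$ and carrying the wall-crossing function $p_k\phi(\sigma)$. The first step is to treat the initial walls $e_i^\perp$ bounding the positive chamber $M_{\bR}^+$: for $\fD_Q^{\le k}$ the assigned function is $p_k\phi(e_i^\perp)$ by definition, and for $\fD_J^{\le k}$ one applies $\cI$ to the Hall class of the moduli stack of the simple module $S_i$; since $S_i$ is a brick with no self-extensions, Joyce's integration identity yields exactly $p_k\exp\bigl(\sum_m (-1)^{m-1}x^{m e_i}/m^2\bigr)$, matching $p_k\phi(e_i^\perp)$.

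Next I would propagate the agreement to every reachable chamber. For a maximal cone $C = C([T_1],\dots,[T_n])$ of $\cF^{\bg}_{\cluster}(Q)$, Theorem~\ref{theo::twosiltbijcluster}(1) supplies a silting object $T \in \tsilt^+ J(Q,W)$ whose $\bg$-vectors generate $C$, and a codimension-$1$ face $\sigma \subset d_\sigma^\perp$ of $C$ corresponds to the mutation of $T$ at the omitted summand. On the cluster side, consistency of $\fD_Q^{\le k}$ together with the Fomin--Zelevinsky mutation rule of~\cite{FominZelevinsky2007} forces $\phi^{\le k}_Q(\sigma) = p_k\phi(\sigma)$. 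On the stability side, the $\tau$-tilting-theoretic description of walls (developed in~\cite{bridgeland2017,Asai} and refined in~\cite{Mou}) shows that the only $\theta$-semistable modules for $\theta$ relatively generic on $\sigma$ are copies of a single brick $M_\sigma$ with class $[M_\sigma] = d_\sigma$, arising from a $\tau$-tilting reduction of the silting mutation and $\tau$-rigid in the reduced algebra; applying $\cI$ to the resulting Hall class produces the same series $p_k\phi(\sigma)$. By induction on the length of a mutation sequence starting from $J$ (respectively from $\Sigma J$, using Theorem~\ref{theo::twosiltbijcluster}(2) and $\tsilt^- J(Q,W)$ for the $-\cF^{\bg}_{\cluster}(Q^{\op})$ half), the match extends to the full reachable subfan.

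Finally, the defining condition of $\cF^{\bg}_k(Q)$, retaining only those $\sigma$ with $p_k\phi(\sigma)$ non-trivial, precisely filters out the codimension-$1$ cones that collapse under the projection $\hat G \to G^{\le k}$, so the collection exhibited above is a genuine subfan of each $\fS^{\le k}_{\bullet}$. The main obstacle is the stability-side identification: proving that the $\theta$-semistable locus over $\sigma$ is exhausted by powers of a single brick $M_\sigma$ (rather than a positive-dimensional family), and that the Hall-algebra integral over this locus matches the dilogarithmic series $\exp\bigl(\sum_m (-1)^{m-1}x^{m d_\sigma}/m^2\bigr)$. This rests on the interplay between $\tau$-tilting reduction---which reduces the local computation at $\sigma$ to the initial-seed case in a smaller algebra---and Joyce's integration identity for a brick; everything else is combinatorial bookkeeping along mutation paths.
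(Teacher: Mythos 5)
First, note that the paper does not actually prove this statement: Theorem~\ref{theo::cluster subfan} is imported verbatim from \cite[Theorem 4.25]{Mou} and is used as an input to Theorem~\ref{theo::scatteringdiags}, so there is no internal proof to compare yours against; what you have written is an attempt to reprove Mou's theorem. Your first step (the initial walls $e_i^{\perp}$: semistables of class proportional to $e_i$ are direct sums of $S_i$ since $Q$ has no loops, and the integration map gives $p_k\phi(e_i^\perp)$) is correct and is indeed how \cite{bridgeland2017} anchors the comparison, and on the cluster side the statement that the chambers of $\cF^{\bg}_{\cluster}(Q)$ form a subfan of $\fD_Q$ with the standard functions is a theorem of \cite{GHKK}, which you should cite as such rather than derive from ``consistency plus the mutation rule of \cite{FominZelevinsky2007}.''

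The genuine gap is in your central stability-side step. For $\theta$ generic in a facet $\sigma$ of a reachable chamber, $\tau$-tilting reduction in the sense of \cite{Jasso} identifies the abelian category of $\theta$-semistable $J$-modules with $\mod C$ for a finite-dimensional \emph{local} algebra $C$ of rank one; your assertion that the semistables are ``copies of a single brick $M_\sigma$'' is exactly the assertion $C\cong k$, i.e.\ that the labelling brick admits no self-extensions inside this wide subcategory. Neither reduction nor Joyce's integration identity provides this: reduction only lands you on a rank-one local algebra, and the standard dilogarithm series $\exp\bigl(\sum_{m\ge 1}(-1)^{m-1}x^{m d_\sigma}/m^2\bigr)$ is the output of the Hall-algebra computation only when the semistable category is $\add$ of a rigid brick (if $C$ were, say, $k[x]/(x^2)$, the wall function would be a different series). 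Proving $C\cong k$ on every reachable wall of a Jacobian algebra of a non-degenerate potential is the real content, and it is precisely what Mou circumvents by a different mechanism: he proves that stability scattering diagrams transform under QP mutation by the same piecewise-linear rule as cluster scattering diagrams (using Keller--Yang/Nagao-type derived equivalences and Hall-algebra identities), and then transports the initial-wall computation along mutation sequences to all reachable chambers, for both $\tsilt^+$ and $\tsilt^-$. So your outline, as it stands, either needs an independent proof that every reachable wall carries a rigid brick (equivalently, no loop in the quiver of the reduced algebra $C$, which requires using non-degeneracy of $W$), or it collapses back onto the mutation-invariance theorem that you are not proving.
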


We are ready to prove Theorem \ref{theo::scatteringdiags}.
For any wall $\sigma \in \cW(\fS_Q^{\le k})$, there are two cones $\sigma^+, \sigma^- \in G_Q^{\le k}$ with dimension $n$ such that $\sigma^+ \cap \sigma^- = \sigma$ since $\fS_Q^{\le k}$ is a finite fan.
If $Q$ is cluster-$\bg$-dense, then $\Phi_{G_Q^{\le k}}(M_{\bR}^+,\sigma^+)$ and $\Phi_{G_Q^{\le k}}(M_{\bR}^+,\sigma^-)$ are products of some functions on $\cF^{\bg}_k(Q)$.
Thus so is $\Phi_{G_Q^{\le k}}(\sigma^+,\sigma^-)$.
On the other hand, there is a $\fD_Q^{\le k}$-generic curve from $\sigma^+$ to $\sigma^-$ that intersects only one wall $\sigma$.
Thus we have $\Phi_{G_Q^{\le k}}(\sigma^+,\sigma^-) =\phi_{G_Q^{\le k}}(\sigma)^{\epsilon}$ for some $\epsilon \in \{1,-1\}$.
Therefore, for any wall $\sigma \in \cW(\fS_Q^{\le k})$, $\phi_{G_Q^{\le k}}(\sigma)$ is a product of some functions on $\cF^{\bg}_k(Q)$.

Similarly, we have that, for any wall $\sigma \in \cW(\fS_J^{\le k})$, $\phi_{G_J^{\le k}}(\sigma)$ is a product of some functions on $\cF^{\bg}_k(Q)$.
Consequently, Theorem \ref{theo::cluster subfan} implies Theorem \ref{theo::scatteringdiags}(1).

Suppose that $Q$ is half cluster-$\bg$-dense with the separating hyperplane $d^{\perp}$ for $d \in N^+$.
If $\sigma$ does not lie on $d^{\perp}$, then we have $\phi_{G_Q^{\le k}}(\sigma) = \phi_{G_J^{\le k}}(\sigma)$ in the same way as above.
Thus Theorem \ref{theo::scatteringdiags}(2) holds.

\section*{Acknowledgements}
We are grateful to A.~Skowro\'nski and G.~Zwara for suggesting an argument in the proof of Theorem~\ref{theo::genericDecompositionForTame}.  We also thank J.~Schr\"oer for discussing the results of~\cite{GeissLabardiniFragosoSchroer2} in a meeting at MFO in January 2020.  We thank B.~Keller for discussions on twists, for his comments on a previous version, and for agreeing to write the appendix to this paper.  Finally, we thank O.~Iyama for discussing the results of~\cite{AsaiDemonetIyama}, and K.~Mousavand for valuable discussions.  

\appendix

\section{Truncated twist functors, by Bernhard Keller}\label{appendix}

\subsection{Construction} We refer to \cite{Keller06d} for the terminology and
notation that we will use for dg (=differential graded) categories.
Let $k$ be a commutative ring and $\cS$, $\cT$
$k$-linear triangulated categories with dg enhancements $\cS_{dg}$ and
$\cT_{dg}$. Let $s: \cS \to \cT$ be a triangle functor induced by
a dg functor $s_{dg}: \cS_{dg} \to \cT_{dg}$. Assume that $s$ 
admits a right adjoint $r:\cT \to \cS$. Let there be given a $t$-structure
with truncation functor $\tau_{\leq 0}$ on $\cS$.

\begin{proposition} \label{prop:truncated-twist}
\begin{itemize} 
\item[a)] There is a triangle functor $t: \cT \to \cT$ admitting a dg enhancement
and fitting into a functorial triangle
\[
\xymatrix{ sr\ar[r] & \id_{\cT} \ar[r] & t \ar[r] & \Sigma sr}.
\]
\item[b)] There is a $k$-linear functor $t^0: \cT \to \cT$ fitting into a functorial
triangle
\[
\xymatrix{
s \tau_{\leq 0} r \ar[r] & \id_{\cT} \ar[r] & t^0 \ar[r] & \Sigma s\tau_{\leq 0} r.}
\]
\end{itemize}
\end{proposition}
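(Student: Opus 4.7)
The plan is to carry out the construction at the level of dg bimodules, where cones of natural transformations between dg functors can be taken functorially. The dg functor $s_{dg}\colon\cS_{dg}\to\cT_{dg}$ is represented (after suitable cofibrant resolutions) by a dg bimodule $S$ such that $s(-)\simeq S\otimes^{\mathbf{L}}_{\cS_{dg}}(-)$. Since $s$ admits a right adjoint, this adjoint is represented by another dg bimodule $R$, and the counit $sr\to\id_{\cT}$ corresponds to a morphism of dg $\cT_{dg}$-bimodules $\epsilon\colon S\otimes^{\mathbf{L}}_{\cS_{dg}}R\to I_{\cT_{dg}}$, where $I_{\cT_{dg}}$ is the identity bimodule. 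For part (a), I would set $T:=\cone(\epsilon)$ in the dg category of dg $\cT_{dg}$-bimodules and define $t:=T\otimes^{\mathbf{L}}_{\cT_{dg}}(-)$; the dg enhancement of $t$ is inherited from that of $T$, and the defining triangle of $T$ yields the required functorial triangle $sr\to\id_{\cT}\to t\to\Sigma sr$.

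For part (b), the strategy is to carry out the analogous construction with $\tau_{\leq 0}r$ in place of $r$. To this end, I would lift $\tau_{\leq 0}$ to a dg-functorial operation on dg $\cS_{dg}$-modules---using the good truncation of complexes, which is functorial already at the chain level---and apply this operation bimodule-wise to $R$ in order to produce a dg bimodule $R^{0}$ together with a canonical morphism of bimodules $R^{0}\to R$ whose underlying natural transformation is $\tau_{\leq 0}r\to r$. I would then define
\[
T^{0}:=\cone\bigl(S\otimes^{\mathbf{L}}_{\cS_{dg}}R^{0}\longrightarrow S\otimes^{\mathbf{L}}_{\cS_{dg}}R\xrightarrow{\epsilon}I_{\cT_{dg}}\bigr)
\]
and set $t^{0}:=T^{0}\otimes^{\mathbf{L}}_{\cT_{dg}}(-)$; the defining triangle of $T^{0}$ then supplies the required functorial triangle $s\tau_{\leq 0}r\to\id_{\cT}\to t^{0}\to\Sigma s\tau_{\leq 0}r$.

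The main obstacle lies in part (b): since $\tau_{\leq 0}$ is not a triangle functor, it does not \emph{a priori} admit a dg-functorial lift compatible with the bimodule structure on $R$. What makes the construction go through is the compatibility of the $t$-structure on $\cS$ with the dg enhancement $\cS_{dg}$---specifically, the fact that $\tau_{\leq 0}$ can be realised by the good truncation on dg $\cS_{dg}$-modules---so that applying this truncation to the $\cS_{dg}$-module $R(-,Y)$ for each $Y\in\cT_{dg}$ yields a well-defined dg bimodule $R^{0}$ and a morphism $R^{0}\to R$. Once $R^{0}$ is in hand, the remainder of the argument for part (b) is a direct parallel of part (a), and no further obstructions appear.
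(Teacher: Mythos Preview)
Your treatment of part (a) is fine and matches the paper's: both reduce to taking a cone of the counit at the dg level (the paper phrases it via a dg adjoint $r_{dg}$, you via bimodules, but these are equivalent).

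The gap is in part (b), and it lies precisely where you flag it. The hypothesis is only that \emph{some} $t$-structure with truncation $\tau_{\leq 0}$ is given on $\cS$; nothing says it is the canonical one coming from good truncation of complexes, nor that it is compatible with the dg enhancement in any way. Your sentence ``the fact that $\tau_{\leq 0}$ can be realised by the good truncation on dg $\cS_{dg}$-modules'' is therefore an additional assumption, not a fact following from the setup. For an arbitrary $t$-structure on $H^0(\cS_{dg})$ there is no reason for $\tau_{\leq 0}$ to lift to a dg-functorial operation, so the bimodule $R^0$ you want need not exist.

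The paper circumvents this by building an auxiliary dg category $\cM_{dg}$ whose objects are triples $(U,V,f:sV\to U)$ and whose homotopy category $\cM$ sits in a recollement between $\cS$ and $\cT$. One then glues the given $t$-structure on $\cS$ with the trivial one on $\cT$ to obtain a $t$-structure on $\cM$ whose truncation sends $(U,V,f)$ to $(U,\tau_{\leq 0}V, f\circ s(c))$ for any closed lift $c$ of $\tau_{\leq 0}V\to V$. This truncation is a genuine $k$-linear functor on $\cM$ simply because truncations of $t$-structures always are; no dg lift of $\tau_{\leq 0}$ on $\cS$ is required. The functor $t^0$ is then the composite $\cT\xrightarrow{P_\rho}\cM\xrightarrow{\tau_{\leq 0}}\cM_{\leq 0}\xrightarrow{\cone}\cT$, where the outer two functors come from dg functors and the middle one from the $t$-structure on $\cM$. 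The point is that packaging the morphism $sV\to U$ as part of the \emph{object} makes the cone functorial, so that the only non-dg step---the truncation---can be taken purely at the triangulated level.
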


The functor $t$ is called the {\em twist functor} and $t^0$ the {\em truncated
twist functor} associated with $s$.

\begin{examples}
\begin{enumerate}
\item Let $\cA$ be a dg category and $X$ an object of its derived
category $\cD\cA$. We can take $\cS=\cD k$, $\cT=\cD\cA$ and
$s=?\ltensor{k} X: \cD k \to \cD\cA$ with right adjoint
\[
r=\RHom{\cA}(X,?): \cD\cA \to \cD k.
\]
We get a twist functor $t_X$ and a truncated twist functor $t^0_X$ fitting
into functorial triangles
\[
\xymatrix{
\RHom{\cA}(X,?)\ltensor{k} X \ar[r] & \id_{\cD\cA} \ar[r] & t_X \ar[r] & }
\]
and
\[
\xymatrix{
\tau_{\leq 0}(\RHom{\cA}(X,?))\ltensor{k} X \ar[r] & \id_{\cD\cA} \ar[r] & t^0_X \ar[r] & }.
\]
\item With $\cA$, $X$ and $\cS$ as in example (1), we can take $\cT=(\cD\cA)^{op}$.
We have an adjoint pair
\[
\xymatrix{
(\cD\cA)^{op} \ar@<-1ex>[d]_{s=\RHom{\cA}(?,X)}\\
\cD k. \ar@<-1ex>[u]_{\RHom{k}(?,X)=r}
}
\]
This leads to the following functorial triangles in $\cD\cA$:
\[
\xymatrix{
\mbox{ } \ar[r] & t_X \ar[r] & \id_{\cD\cA} \ar[r] & \RHom{k}(\RHom{\cA}(?,X),X) 
}
\]
and 
\[
\xymatrix{
\mbox{ } \ar[r] & t_X^0 \ar[r] & \id_{\cD\cA} \ar[r] & \RHom{k}(\tau_{\leq 0}\RHom{\cA}(?,X), X).
}
\]
\end{enumerate}
\label{ex:trunc-twist}
\end{examples}

\begin{proof}[Proof of the Proposition] a) This is well-known, cf. for
example \cite{AnnoLogvinenko19}: One shows that one can replace
$\cT_{dg}$ and $\cS_{dg}$ with quasi-equivalent dg categories
such that $s_{dg}$ admits a dg adjoint functor $r_{dg}$.

b) Let us fix $s_{dg}$ and $r_{dg}$ as in a). From now on, we suppress
the subscript `dg' on these functors. Let $\cM_{dg}$ be the dg category
whose objects are the triples
\[
(U,V, f: sV \to U)
\]
where $U$ is in $\cT$, $V$ in $\cS$ and $f: sV \to U$ is a closed
morphism of degree $0$ in $\cS_{dg}$. By definition, the morphism
complex between two objects $(U,V,f)$ and $(U',V',f')$ is the cylinder
over the morphism
\[
[f^*, -f'\circ s(?)] : \cT_{dg}(U,U')\oplus \cS_{dg}(V,V') \to \cT_{dg}(sV,U').
\]
Thus, closed morphisms of degree $0$ are triples $(u,v,h)$ such that
$u:U \to U'$ is closed in $\cT_{dg}$, $v: V \to V'$ is closed in $\cS_{dg}$
and we have
\[
u\circ f - f'\circ s(v) = d(h)
\]
so that we have a homotopy commutative square
\[
\xymatrix{
sV \ar[d]_{s(v)} \ar[r]^f \ar@{-->}[dr]^h & U \ar[d]^u \\
sV' \ar[r]_{f'} & U'.
}
\]
Composition is defined in the natural way.
The projection functor $P: \cM_{dg} \to \cT_{dg}$ takes a triple
$(U,V,f)$ to $U$. It is easy to see that it has a fully faithful right dg adjoint $P_\rho$
taking $U$ to $(U,rU, srU \to U)$, where the last arrow is the adjunction
morphism, and a fully faithful left dg adjoint $P_\lambda$ taking
$U$ to $(U,0,0 \to U)$. The functor $I$ taking $V$ to $(0,V, sV \to 0)$
identifies $\cS_{dg}$ with the kernel of $P$.
We can construct the dg twist functor $t$ as the composition
\[
\xymatrix{ \cT_{dg} \ar[r]^{P_\rho} & \cM_{dg} \ar[r]^{\mbox{\tiny cone}} & \cT_{dg}.}
\]
Now let $\cM=H^0(\cM_{dg})$. We have a recollement
\[
\begin{tikzcd}
  \cS\arrow{rr}[description]{I} &&\cM
  \arrow[yshift=-1.5ex]{ll}{I_\rho}
  \arrow[yshift=1.5ex]{ll}[swap]{I_\lambda}
  \arrow{rr}[description]{P} &&\cT\,.
  \arrow[yshift=-1.5ex]{ll}{P_\rho}
  \arrow[yshift=1.5ex]{ll}[swap]{P_\lambda}
\end{tikzcd}
\]
Let $\cS_{\leq 0}$ be the left aisle of the
given $t$-structure on $\cS$. Using Th\'eor\`eme~1.4.10 of 
\cite{BeilinsonBernsteinDeligne82} or directly, we see that $\cM$ admits
a $t$-structure whose left aisle $\cM_{\leq 0}$ has the objects
$(U,V,f)$ such that $V\in \cS_{\leq 0}$. The corresponding morphism
$\tau_{\leq 0} \to \id_\cM$ is given on an object $(U,V,f)$ by
$(U,\tau_{\leq 0}V,g)$, where $g$ is the composition
\[
\xymatrix{s\tau_{\leq 0} V \ar[r]^{sc} & sV \ar[r]^f & U}
\]
for any closed morphism $c$ lifting the adjunction morphism 
$\tau_{\leq 0} V \to V$. We obtain the truncated twist functor $t^0$ as
the composition
\[
\xymatrix{
\cT \ar[r]^{P_\rho} & \cM \ar[r]^{\tau_{\leq 0}} & 
\cM_{\leq 0}  \ar[r]^{\mbox{\tiny cone}} & \cT.}
\]
\end{proof}

\subsection{Adjoints} We keep the assumptions and notations of the preceding
section. Suppose that $s: \cS \to \cT$ admits a left adjoint $l: \cT \to \cS$
and that the inclusion $\cS_{\leq 0} \to \cS$ also admits a left adjoint
$\tau_{\leq 0}'$ (for example, this holds if $k$ is a field and $\cS=\cD k$ is
endowed with the canonical $t$-structure).

\begin{proposition}
\begin{itemize}
\item[a)] The twist functor $t$ admits a left adjoint $t'$ fitting
into a functorial triangle
\[
\Sigma^{-1} sl \to t' \to \id_{\cT} \to sl.
\]
\item[b)] The truncated twist functor $t^0$ admits a left adjoint
$t'^0$ fitting into a functorial triangle
\[
\Sigma^{-1} s \tau'_{\leq -1} l \to t'^0 \to \id_\cT \to s\tau'_{\leq -1} l.
\]
\end{itemize}
\end{proposition}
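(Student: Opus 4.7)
The plan is to mimic the construction in the proof of Proposition~\ref{prop:truncated-twist}, with $l$ playing the role of $r$ throughout and with fibers replacing cones.

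For part (a), I would construct a dg category $\cM'_{dg}$ whose objects are triples $(U, V, g: U \to sV)$ with $g$ a closed degree-zero morphism, in analogy with $\cM_{dg}$ but with the arrow reversed. The projection $P': \cM' \to \cT$ sending $(U, V, g) \mapsto U$ admits a fully faithful \emph{left} dg adjoint $P'_\lambda$ taking $U$ to $(U, lU, \eta_U: U \to slU)$, where $\eta$ is the unit of $l \dashv s$. Defining $t'$ as the composition of $P'_\lambda$ with the fiber functor $(U, V, g) \mapsto \mathrm{fib}(g)$ produces the functorial triangle $t' \to \id_\cT \to sl \to \Sigma t'$.

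To verify $t' \dashv t$, apply $\Hom{\cT}(-, Y)$ to the triangle $t' \to \id \to sl$ and $\Hom{\cT}(X, -)$ to $sr \to \id \to t$, obtaining two long exact sequences. The natural isomorphism $\Hom{\cT}(slX, Y) \cong \Hom{\cT}(X, srY)$ holds because $sl$ is the left adjoint of $sr$ (being the composite of the left adjoints $l$ of $s$ and $s$ of $r$). The compatibility of the unit $\id \to sl$ and counit $sr \to \id$ under this isomorphism matches the two long exact sequences termwise, and the five lemma yields $\Hom{\cT}(t'X, Y) \cong \Hom{\cT}(X, tY)$ naturally in $X$ and $Y$.

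For part (b), I would repeat the construction using a suitable $t$-structure on $\cM'$. Modify $P'_\lambda$ to send $U$ to $(U, \tau'_{\leq -1} lU, U \to s\tau'_{\leq -1} lU)$, where the third component is the composite of $\eta_U$ with $s$ applied to the unit of $\tau'_{\leq -1} \dashv \iota_{\leq -1}$. Composing with the fiber functor, the resulting functor $t'^0$ fits into the triangle $t'^0 \to \id_\cT \to s\tau'_{\leq -1} l \to \Sigma t'^0$. Adjointness $t'^0 \dashv t^0$ follows from a parallel long-exact-sequence argument that combines the adjunctions $s \dashv r$, $l \dashv s$, $\tau'_{\leq -1} \dashv \iota_{\leq -1}$, together with the $t$-structure vanishing $\Hom{\cS}(\cS_{\leq -1}, \cS_{\geq 0}) = 0$ to identify the relevant hom spaces, and then an application of the five lemma.

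The main obstacles are first the rigorous construction of the dg enhancement of $\cM'_{dg}$ so that $t'$ and $t'^0$ are genuine triangle functors (not merely functors between triangulated categories), and second tracking the shift from $\tau_{\leq 0}$ in the $t^0$-triangle to $\tau'_{\leq -1}$ in the $t'^0$-triangle. This shift reflects the duality between the cofiber-based construction of $t, t^0$ and the fiber-based construction of $t', t'^0$, combined with the distinct indexing behaviour of left versus right adjoints to $t$-structure inclusions on the triangulated level.
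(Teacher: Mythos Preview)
Your approach is correct and close in spirit to the paper's, but the execution differs in two places worth noting.

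For part (b), the paper does not build a dual category $\cM'_{dg}$ at all. It simply takes the triangle $\Sigma^{-1} s\tau'_{\leq -1} l \to t'^0 \to \id_\cT$ as defining $t'^0$ and then computes $\Hom{\cT}(t'^0 U, U')$ directly at the level of $\RHom$-complexes (written as $H^0$ of a cylinder). The chain of natural isomorphisms runs
\[
(s\tau'_{\leq -1}lU,\Sigma U') \cong (\tau'_{\leq -1}lU,\Sigma rU') \cong (\tau'_{\leq -1}lU,\tau_{\leq -1}\Sigma rU') \cong (lU,\Sigma\tau_{\leq 0}rU') \cong (U,\Sigma s\tau_{\leq 0}rU'),
\]
using successively $s\dashv r$, the fact that $\tau'_{\leq -1}lU\in\cS_{\leq -1}$ together with $\tau_{\leq -1}$ right adjoint to inclusion, the identity $\tau_{\leq -1}\Sigma = \Sigma\tau_{\leq 0}$ together with $\tau'_{\leq -1}$ left adjoint to inclusion, and finally $l\dashv s$. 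This is exactly the content of the adjunctions you list, but carried out on complexes rather than on cohomology, so naturality is automatic. Your ``obstacle'' about the shift from $\tau_{\leq 0}$ to $\tau'_{\leq -1}$ is resolved precisely by the middle two steps.

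Your five-lemma argument is the homological shadow of this same computation, and it is valid, but be aware that the five lemma by itself only produces an abstract isomorphism of groups. To get a \emph{natural} isomorphism you must first write down a candidate map (for instance by constructing the unit $\id_\cT \to t^0 t'^0$ from the triangles) and then apply the five lemma to show it is an isomorphism. The paper's complex-level computation sidesteps this bookkeeping entirely. Your construction of $\cM'_{dg}$ for part~(a) is fine but more than what is needed; the paper, like you for part~(a), just cites the literature.
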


\begin{examples} We keep the notations of Examples~\ref{ex:trunc-twist}.
We suppose that $k$ is a field.
\begin{enumerate} 
\item Suppose that $X\in \cD\cA$ is an object such that the homology of
$X(A)$ is of finite total dimension for all $A\in\cA$. Put
$D=\Hom{k}(?,k)$. Then for $U\in\cD\cA$
and $V\in \cD k$, we have canonical isomorphisms
\[
\RHom{\cA}(U, V\ten X) = \RHom{\cA}(U,\Hom{k}(DX,V))
= \Hom{k}(U\ltensor{\cA} DX, X).
\]
We get functorial triangles
\[
\xymatrix{
\mbox{ } \ar[r] & t' \ar[r] & \id_{\cD \cA} \ar[r] & (?\ltensor{\cA} DX)\ten X
}
\]
and
\[
\xymatrix{
\mbox{ } \ar[r] & t'^0 \ar[r] & \id_{\cD\cA} \ar[r] & (\tau'_{\leq -1}(?\ltensor{\cA} DX)) \ten X.}
\]

\item Let $\cT\subseteq \cD\cA$ be a Hom-finite triangulated subcategory
and $X$ an object in $\cT$ such that the homology of $X(A)$ is of finite
total diemsnion for each $A\in\cA$. Let
\[
s=\Hom{k}(?,X) : \cD^b(k) \to \cT^{op}.
\]
Then for $U$ in $\cT$ and $V$ in $\cD^b(k)$, we have
\begin{align*}
(\cD\cA)^{op}(U, \Hom{k}(V,X)) &= (\cD\cA)(\Hom{k}(V,X),U) \\
&= (\cD\cA)(X \ten DV, U) \\
&= (\cD\cA)(DV\ten X, U) \\
&= (\cD k)(DV,\RHom{\cA}(X,U)) \\
&= (\cD k)(D\RHom{\cA}(X,U), V).
\end{align*}
Thus, the functor $s: \cD^b(k) \to \cT$ admits $l=D\RHom{\cA}(X,?)$ as
a left adjoint and we get functorial triangles
\[
\xymatrix{
\mbox{ } \ar[r] & t' \ar[r] & \id_{\cT} \ar[r] & D\RHom{\cA}(?,X) \ten X}
\]
and
\[
\xymatrix{
\mbox{ } \ar[r] & t'^0 \ar[r] & \id_{\cT} \ar[r] & \tau'_{\leq -1}(D\RHom{\cA}(?,X)) \ten X.}
\]
\end{enumerate}
\end{examples}

\begin{proof}[Proof of the Proposition] Part a) is well-known, cf. for
example \cite{AnnoLogvinenko19}. Let us prove part b).
For objects $U,U'$ of $\cT$, we abbreviate
\[
(U,U') = \RHom{\cT_{dg}}(U,U')
\]
and similarly
\[
(V,V')=\RHom{\cS_{dg}}(V,V')
\]
for objects $V,V'$ of $\cS$. Let $U$ and $U'$ be objects of $\cT$. We have
natural isomorphisms
\begin{align*}
\Hom{\cT}(t'^{0}U,U') & \iso H^0(\cyl((s \tau'_{\leq -1} l U,\Sigma U') \to (U, \Sigma U'))) \\
& \iso H^0(\cyl((\tau'_{\leq -1}  l U, \Sigma rU') \to (U, \Sigma U'))) \\
& \iso H^0(\cyl((\tau'_{\leq -1} l U, \tau_{\leq -1} \Sigma r U') \to (U, \Sigma U'))).
\end{align*}
Here the last isomorphism holds because we have isomorphisms
\[
\Hom{\cS}(\tau'_{\leq -1} l U, \tau_{\leq_{-1}} \Sigma rU) \iso
\Hom{\cS}(\tau'_{\leq -1} l U, \Sigma rU)
\]
and
\[
\Hom{\cS}(\Sigma \tau'_{\leq -1} l U, \tau_{\leq_{-1}} \Sigma rU) \iso
\Hom{\cS}(\Sigma \tau'_{\leq -1} l U, \Sigma rU).
\]
We continue the chain of isomorphisms with
\begin{align*}
H^0(\cyl((\tau'_{\leq -1} l U, \tau_{\leq -1} \Sigma r U') \to (U, \Sigma U'))) & \iso
H^0(\cyl((lU, \Sigma \tau_{\leq 0} r U') \to (U,\Sigma U'))) \\
& \iso H^0(\cyl(U, \Sigma  s \tau_{\leq 0} r U') \to (U, \Sigma U'))) \\
& \iso \Hom{\cT}(U, t^{0} U').
\end{align*}
This shows the claim.
\end{proof}

\bibliographystyle{alpha}
\bibliography{plamondonYurikusa_keller.bib}

\end{document}